\newcommand{\EEE}{\color{black}}
\newtheorem{theorem}{Theorem}[section]
\newtheorem{lemma}[theorem]{Lemma}
\newtheorem{proposition}[theorem]{Proposition}
\newtheorem{remark}[theorem]{Remark}
\newtheorem*{theorem*}{\it Theorem}
\numberwithin{equation}{section}
\def\1{\raisebox{2pt}{\rm{$\chi$}}}
\def\R{\mathbb{R}}
\def\XXint#1#2#3{{\setbox0=\hbox{$#1{#2#3}{\int}$}
     \vcenter{\hbox{$#2#3$}}\kern-.5\wd0}}
\newcommand{\twopartdef}[4]
{
\left\{
		\begin{array}{ll}
			#1 & #2 \\
			#3 & #4
		\end{array}
	\right.
}
\definecolor{violet(ryb)}{rgb}{0.53, 0.0, 0.69}
\begin{document}

\title[Double-bubble minimizers]{\bf A characterization of
  $\ell_1$ double bubbles\\  with general interface interaction}

\author[M. Friedrich]{Manuel Friedrich} 
\address[Manuel Friedrich]{Department of Mathematics, Friedrich-Alexander Universit\"at Erlangen-N\"urnberg. Cauerstr.~11,
D-91058 Erlangen, Germany, \& Mathematics M\"{u}nster,  
University of M\"{u}nster, Einsteinstr.~62, D-48149 M\"{u}nster, Germany}
\email{manuel.friedrich@fau.de}

\author[W. G\'orny]{Wojciech G\'orny}
\address[Wojciech G\'orny]{Faculty of Mathematics, Informatics and
  Mechanics, University of Warsaw, Banacha 2, 02-097 Warsaw, Poland
  and Faculty of Mathematics, University of
  Vienna, Oskar-Morgenstern-Platz 1, A-1090 Vienna, Austria}
\email{wojciech.gorny@univie.ac.at}
\urladdr{\url{https://www.mat.univie.ac.at/~wgorny}}

\author[U. Stefanelli]{Ulisse Stefanelli}
\address[Ulisse Stefanelli]{Faculty of Mathematics, University of
  Vienna, Oskar-Morgenstern-Platz 1, A-1090 Vienna, Austria,
Vienna Research Platform on Accelerating
  Photoreaction Discovery, University of Vienna, W\"ahringerstra\ss e 17, 1090 Vienna, Austria,
 \& Istituto di
  Matematica Applicata e Tecnologie Informatiche {\it E. Magenes}, via
  Ferrata 1, I-27100 Pavia, Italy
}
\email{ulisse.stefanelli@univie.ac.at}
\urladdr{\url{http://www.mat.univie.ac.at/~stefanelli}}

\keywords{Double bubble, Wulff shape. \\
\indent 2020 {\it Mathematics Subject Classification:} 
49Q10. 
}

\setcounter{tocdepth}{1}


\begin{abstract}
We investigate the optimal arrangements of two planar sets of given volume which are minimizing the $\ell_1$ double-bubble interaction functional. The latter features a competition between the minimization of the $\ell_1$ perimeters of the two sets and the maximization  of their $\ell_1$ interface. We investigate the problem in its full generality for sets of finite perimeter, by considering the whole range of possible interaction intensities and all relative volumes of the two sets. The main result is the complete classification of minimizers.
\end{abstract}

\maketitle
\thispagestyle{empty}


\section{Introduction}

This note is concerned with the planar double-bubble problem with
respect to the $\ell_1$ norm $\| (x_1,x_2)\|_{\ell_1} = |x_1|+|x_2|$. To each configuration
$(A,B)$ consisting of two planar finite perimeter sets
\cite{Ambrosio-Fusco-Pallara}, we associate the energy
\begin{equation}\label{eq:0energy}
E(A,B) :=    \ell_1(\partial^* A  )+\ell_1(\partial^* B  ) + (\eta-2) \, \ell_1(\partial^* A \cap \partial^*B).
\end{equation}
Here, $\partial^*M $ stands for the {\it reduced boundary} of the
finite perimeter set $M\subset {\mathbb R}^2$, and letting
$\nu=(\nu_1,\nu_2)$ be the corresponding (measure-theoretic) {\it
  outer unit normal}, for all rectifiable  $F \subset \partial^* M$  we
indicate 
$$\ell_1(F) = \int_F  (|\nu_1|  + |\nu_2|) \, {\rm d}\mathcal{H}^1$$
the length with respect to the $\ell_1$ norm.

Given the {\it volumes} $V_A,\, V_B >0$ and an {\it interaction
  intensity} $\eta\in (0,2)$, the {\it double-bubble problem} with
respect to the $\ell_1$ norm reads
\begin{equation}
  \label{eq:0double}
  \min\Big\{ E(A,B) \, : \, \text{$A,\, B\subset {\mathbb R}^2$ of
    finite perimeter},  \ A \cap B=\emptyset,  \  {\mathcal L}^2
  (A) \EEE =V_A, \ \text{and} \   {\mathcal L}^2
  (B) \EEE=V_B\Big\}
\end{equation}
where  ${\mathcal L}^2$ \EEE stands for the Lebesgue measure in the plane.

The double-bubble problem can be seen as a schematic model for the
interaction of two systems occupying distinct planar regions. The
minimization of the energy $E$ features the competition between the
first two terms  in \eqref{eq:0energy}, which would favour the configuration consisting of
two separate squares, and the {\it interaction} $\eta$-term, which on the
contrary favours pairs of sets sharing a large portion of the
boundary as $\eta -2<0$.

Our main result Theorem \ref{th:main} provides a complete
characterization of the solutions to the double-bubble problem
\eqref{eq:0double} with respect to all interaction intensities
$\eta\in (0,2)$ and all volume ratios $r=V_B/V_A\in (0,1]$ (we assume
with no loss of generality that $V_A\geq V_B$).  In particular,
\EEE the unique minimizers for given $(r,\eta)$  are classified
\EEE among the three
types of Figure \ref{types}.
\begin{figure}[h]
  \centering
  \pgfdeclareimage[width=100mm]{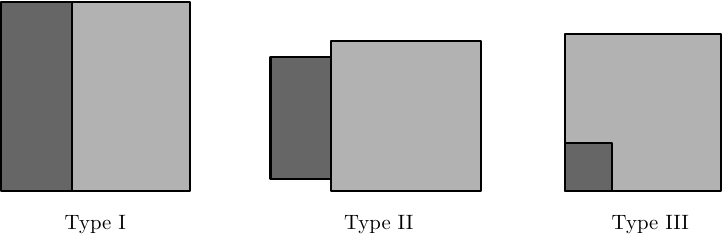}{types}
  \pgfuseimage{types}
  \label{types}
  \caption{Types of minimizers for the double-bubble
     problem \eqref{eq:0double}.}
\end{figure}
More precisely, we identify the regions of the $(r,\eta)$-parameter \EEE space, where
each of the type is favored. The result is depicted in
Figure~\ref{diagram}.  All \EEE phase-separation curves distinguishing types in Figure~\ref{diagram} are explicitly determined  in Section \ref{sec:main} below.

\begin{figure}[h]
  \centering 
  \begin{overpic}[trim=0cm 75mm 0cm 7cm, width=100mm]{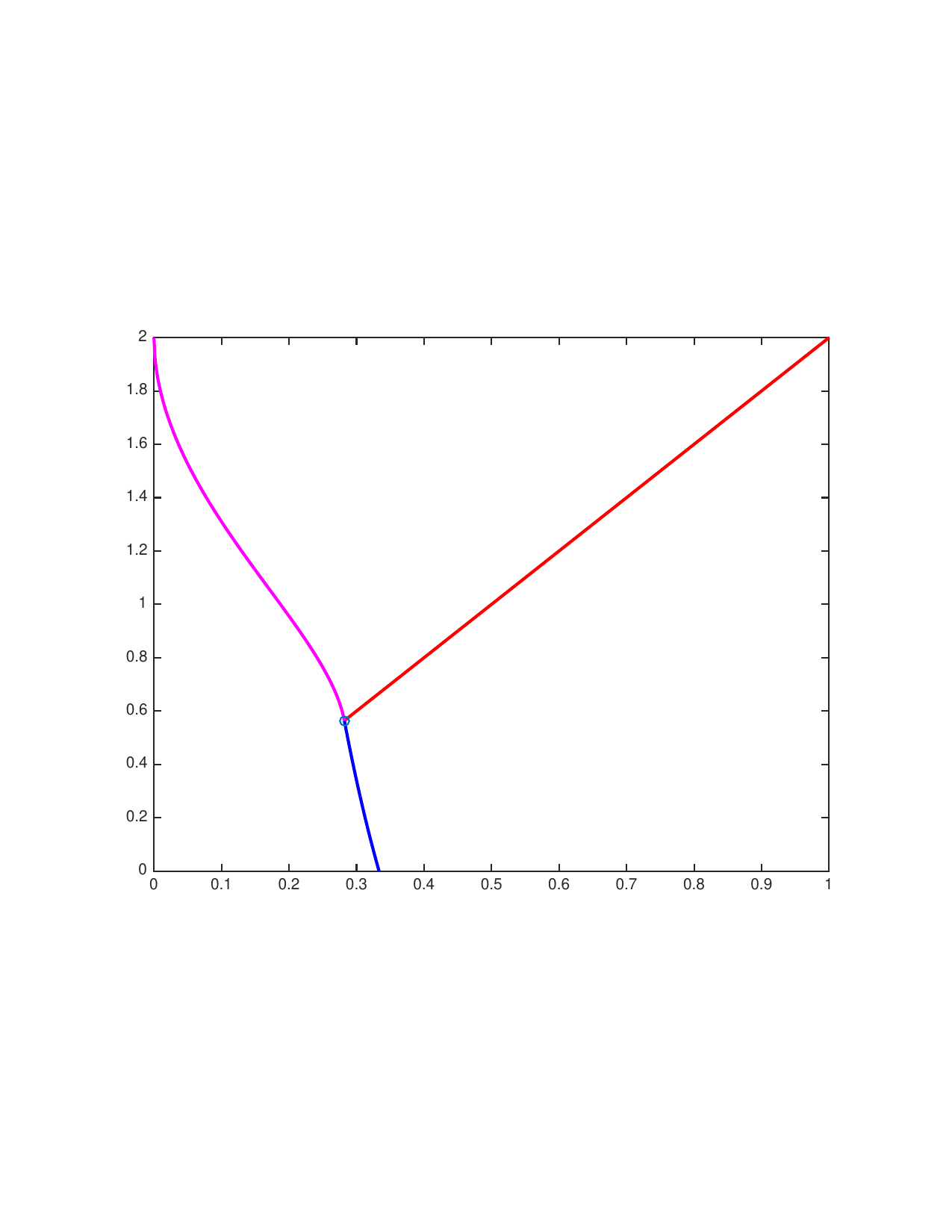}
\put(100,115){Type II}
\put(180,70){Type I}
\put(55,40){Type III}
\put(145,-5){$r$}
\put(30,87){$\eta$}
\end{overpic}

\vspace{0.4cm}

    \caption{Types of minimizers from Figure
      \ref{types} in dependence of $(r,\eta)\in (0,1)\times(0,2)$.}
    \label{diagram} 
  \end{figure}
  
  \EEE

Note that we consider here the whole range of relevant interaction
intensities $\eta \in (0,2)$. In fact, the limiting case $\eta=2$ 
corresponds to no interaction, namely, $E(A,B)=
\ell_1(\partial^*A)+\ell_2(\partial^*B)$,  and the minimizing configuration is two
separate squares of volumes $V_A$ and $V_B$. On the other hand, in the
limiting case $\eta=0$  the minimizing
configuration is such that $A \cup B$ is a square of volume $V_A+
V_B$.

Double-bubble problems \EEE have a long tradition. In the Euclidean setting,
minimisers are known to be enclosed by three spherical
caps, intersecting at an angle of $2\pi/3$. This has been checked in
the plane in \cite{Foisy}, in three-dimensional space in
\cite{Hutchings}, and finally for all dimensions in
\cite{Reichardt}. Double-bubble problems
have been considered in non-Euclidean settings, as well. These include
spherical and hyperbolic spaces 
\cite{Corneli,Corneli2,Cotton,Masters},  hyperbolic surfaces
\cite{Boyer} and cones
 \cite{Lopez,Morgan98}, \EEE the three-dimensional torus \cite{Carrion,Corneli0},  the
Gau\ss\ space \cite{Corneli2,Milman}, and in the anisotropic
Grushin plane \cite{Franceschi}.

Planar double-bubble problems in the case of the $\ell_1$ norm  have already
been considered in the classical case $\eta=1$. The emergence of the
three different minimizers, depending on the relative volume ratio,
has been discussed  by \EEE {\sc Morgan, French, \& Greenleaf}  
\cite{Morgan}. \EEE A new proof of these results, based on different tools,
has been recently presented by {\sc Duncan,
  O'Dwyer, \& Procaccia} \cite{Duncan0}.

Compared with these contributions, we are able to generalize the result in two directions. At first, we are able to weaken the requirements on the class of admissible configurations, which are here solely asked to be of finite perimeter. Note that, in order to make sense to
the double-bubble problem \eqref{eq:0double},  this is a minimal requirement. \EEE Secondly, we can treat here the whole range of interaction intensities $\eta \in (0,2)$, as opposed to  \cite{Duncan0,Morgan} \EEE where only the case $\eta=1$ is addressed.  The case of different interaction intensities was already touched upon by {\sc Wecht, Barber, \& Tice} \cite{WBT}, who actually mention the three kinds of ensuing geometries and comment on the transition for small $\eta$, without exploring the full range $\eta\in(0,2)$ nor providing proof details. \EEE These generalizations are the product of a different proof 
strategy, \EEE 
based on a slicing argument for finite perimeter sets. Besides the above
mentioned generalization, our proof is not based on planar topology
and, we believe, shows prospects for extension to higher dimensions. 

Before closing this introduction, let us mention that the
double-bubble problem \eqref{eq:0double} is the continuous version of
the double-bubble problem on the square lattice. This discrete problem
has been recently tackled in \cite{Duncan,DB}. In particular, the case
of lattice sets of equal cardinality (volume) has been investigated in
\cite{DB} for all $\eta \geq 1$, showing basic geometrical features of
minimizers, presenting some specific minimizers, and  
discussing fluctuations estimates between different minimizers. 
The passage from discrete to continuum is at the core of \cite{Duncan}
where the distance between discrete minima and continuous ones is
explicitly quantified. Convergence of discrete minimizers to the
continuous ones in Figure~\ref{types} is discussed in \cite{DB}.

Compared with the continuum picture, the understanding of the discrete
case is still partial as it is restricted to specific ranges of interaction intensities
and volume ratios. Indeed, in the discrete case the problem
generically presents nonuniqueness of minimizers, posing
a substantial challenge to the analysis.

We recall some notation and state our main result Theorem
\ref{th:main} in Section \ref{sec:main}  where we also provide
some introductory discussion of the phase diagram of Figure \ref{diagram}. The proof of Theorem~\ref{th:main} is then given in Section \ref{sec:proof},  on the
basis of the discussion of the different parameter regimes. This
hinges on two propositions, exhausting all possible cases, which are proved in
Section \ref{sec:proof_propositions}. All technical lemmas needed in
the arguments are then proved in Section \ref{sec:proof_lemmas}.

\section{Main result}\label{sec:main}

In this section we present our main result  Theorem \ref{th:main}. Let us start by fixing some notation. We denote by $P_1(A)$ the $\ell_1$ perimeter of the finite
perimeter set $A\subset {\mathbb R}^2$, namely, 
$$P_1(A) = \int_{\partial^* A} (|\nu_1|+|\nu_2|)\, {\rm d}
\mathcal{H}^1,$$
where we recall that $\partial^*A$ stands for the {\it reduced
  boundary} of $A$, 
$\nu=(\nu_1,\nu_2)$ is the corresponding  (measure-theoretic) {\it outer
unit normal},  and $\mathcal H^1$ stands for the one-dimensional
Hausdorff measure \cite{Ambrosio-Fusco-Pallara}. Recall that 
 $\ell_1(F)$  indicates the {\it $\ell_1$-length} of a rectifiable subset $F$ of
$\partial^* A$  and is defined as 
$$\ell_1(F) =  \int_{F} (|\nu_1|+|\nu_2|)\, {\rm d}
\mathcal{H}^1$$
so that $\ell_1(\partial^* A) \equiv P_1(A)$. 

\subsection{Main result} 
The three configurations $(A,B)$ from Figure \ref{types} and their
corresponding energies are given as follows:

\begin{itemize}
  \item {\bf Type I:}
    \begin{equation}
      \label{eq:case1}
      A=[0,V_A/M]\times [0,M], \quad B=[-V_B/M,0]\times[0,M] \quad
      \text{with} \ \ M= \sqrt{\frac{2(V_A+V_B)}{2+\eta}},
    \end{equation}
    with corresponding energy $E(A,B) = 2\sqrt{4+2\eta}\sqrt{V_A + V_B}$.
 \item {\bf Type II:}  \ \ (only if  $V_B/V_A \leq \eta/2$) \EEE 
    \begin{equation}
      \label{eq:case2}
      A=[0,\sqrt{V_A}]^2, \quad B=[-V_B/M,0]\times[0,M] +(0,\lambda)\quad
      \text{with} \ \ M= \sqrt{\frac{2V_B}{\eta}}
    \end{equation}
     for some $\lambda\in[0,\sqrt{V_A}-M]$, \EEE
 with corresponding energy $E(A,B) = 4\sqrt{V_A} + 2\sqrt{2\eta V_B}$.
 \item {\bf Type III:}
    \begin{equation}
      \label{eq:case3}
      A=[0,\sqrt{V_A+V_B}]^2\setminus B, \quad B= [0,\sqrt{V_B}]^2,
    \end{equation}
     with corresponding energy $E(A,B) =  4\sqrt{V_A + V_B} + 2\eta \sqrt{V_B}$.
  \end{itemize}

%
%
%

We define the {\it phase-separation functions} by
\begin{align*}
  r_{12}(\eta) &= \frac{\eta}{2}, \\[1.5mm]
  r_{13}(\eta) &= \frac{8 + 2\eta - 4 \sqrt{4 + 2\eta}}{\eta^2 - 8 -
                 2\eta + 4 \sqrt{4 + 2\eta}}, \\
  r_{23}(\eta) &= \bigg( \frac{4\eta - 4 \sqrt{2\eta}}{2 \eta + \eta^2 - 2\eta \sqrt{2\eta} - 4} \bigg)^2. 
\end{align*}
Our main result shows that the graphs of the phase-separation functions partition the phase diagram into three regions, see Figure \ref{diagram} for an illustration. In particular, for $\eta = 1$, 
$$r_{12}(1) = \frac{1}{2}, \quad   r_{23}(1)  =  \left(\frac{4(\sqrt{2}-1)}{1+2\sqrt{2}}\right)^2 \sim
0.1872957155 $$
coincide with the critical values given in \cite{Duncan0}. The three types form a triple junction at 
$$\eta_{\rm triple} :=  1 + 2\sqrt{2} - \sqrt{5 + 4\sqrt{2}} \sim 0.56394,  \quad   r_{\rm triple} = \eta_{\rm triple}/2,$$
namely $r_{12}(\eta_{\rm triple}) = r_{13}(\eta_{\rm triple}) = r_{23}(\eta_{\rm triple}) $.

Our main result reads as follows.
 
\begin{theorem}[Characterization of \EEE minimizers]\label{th:main}
Let $(A,B)$ be an optimal configuration with volumes $V_A, V_B >0$
with $V_A\geq V_B$. Then, depending on $r= V_B/V_A \EEE \in (0,1]$ and
$\eta \in (0,2)$  and \EEE up to isometries  preserving \EEE the
coordinate axes we have that  
  
\begin{itemize}
\item   In case \EEE $\eta \in [\eta_{\rm triple},2)$,   \smallskip
  \begin{itemize}
  \item[] if $r > r_{12}(\eta)$, minimizers are of {\rm Type I}, \smallskip
    \item[] if
    $r_{23}(\eta) < r < r_{12}(\eta)$, minimizers are of {\rm Type
      II}, \smallskip
   \item[] if $r < r_{23}(\eta)$, minimizers are of {\rm Type
       III}. \smallskip
     \item[] if $r = r_{12}(\eta)$,  both {\rm Type I} and {\rm Type
         II} are optimal (and coincide), \EEE \smallskip
       \item[] if
  $r = r_{23}(\eta)$, both {\rm Type II} and {\rm Type III} are
  optimal. \smallskip
  \end{itemize} 

\item     In case   \EEE $\eta \in (0,\eta_{\rm triple})$, \smallskip
  \begin{itemize}
  \item[] if $r > r_{13}(\eta)$, minimizers are of {\rm Type I}, \smallskip
    \item[] if
      $r < r_{13}(\eta)$, minimizers are of {\rm Type III}, \smallskip
      \item [] if
  $r = r_{13}(\eta)$, both {\rm Type I} and {\rm Type III} are
  optimal.
  \end{itemize}

\end{itemize}

\end{theorem}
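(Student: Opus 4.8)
The plan is to reduce the full two-dimensional minimization to a one-dimensional problem by a slicing argument, and then to do the resulting finite-dimensional optimization explicitly. First I would establish the lower bound machinery: for a pair $(A,B)$ of finite perimeter sets, slice by vertical lines $\{x_1=t\}$ and by horizontal lines $\{x_2=s\}$. On almost every slice, $A$ and $B$ restrict to one-dimensional finite perimeter sets, whose $\ell_1$-perimeters count endpoints. The key identity is that $\ell_1$-length of a portion of reduced boundary with horizontal normal is recovered by integrating the number of vertical-slice endpoints over $t$, and similarly for the other direction; this is the coarea/slicing formula adapted to the $\ell_1$ norm, and it turns $E(A,B)$ into $\int (\text{slice contributions in }x_1)\,dt + \int (\text{slice contributions in }x_2)\,ds$ with the interface term entering with the favorable sign $\eta-2<0$. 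I would then argue that on each slice the optimal one-dimensional configuration (subject to the measures $|A_t|$, $|B_t|$ of the slices) is an interval configuration, so that without loss of generality one can replace $A$ and $B$ by sets that are "interval-valued" in each direction — this is where the reduction to the three candidate shapes begins, since being convex (or near-convex) in both slicing directions forces rectangle-like geometry.

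Next I would convert the sliced lower bound into a genuinely finite-dimensional inequality. After the slicing reduction, the energy is bounded below by an expression depending only on a few scalar parameters — effectively the side lengths and relative positions of the rectangular pieces — together with the volume constraints $\mathcal{L}^2(A)=V_A$, $\mathcal{L}^2(B)=V_B$. The three types in \eqref{eq:case1}--\eqref{eq:case3} correspond to the three combinatorial ways the two rectangles can be placed: side-by-side sharing a full edge (Type I), side-by-side with $B$ shorter than $A$'s side and a strictly smaller shared edge (Type II), and $B$ nested inside the square $A\cup B$ (Type III). For each type I would minimize over the free geometric parameter (the common height $M$, or the offset $\lambda$, etc.) using the volume constraints, recovering exactly the closed-form energies $2\sqrt{4+2\eta}\sqrt{V_A+V_B}$, $4\sqrt{V_A}+2\sqrt{2\eta V_B}$, and $4\sqrt{V_A+V_B}+2\eta\sqrt{V_B}$ quoted after each display; these are elementary one-variable optimizations (AM–GM, or a single derivative).

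Then comes the comparison phase: with $V_A$ normalized to $1$ and $r=V_B\in(0,1]$, the three energies become explicit functions of $(r,\eta)$, namely $e_{\rm I}=2\sqrt{4+2\eta}\sqrt{1+r}$, $e_{\rm II}=4+2\sqrt{2\eta r}$ (valid only for $r\le\eta/2$, which is exactly the constraint $V_B/V_A\le\eta/2$ making the offset $\lambda$ admissible), and $e_{\rm III}=4\sqrt{1+r}+2\eta\sqrt{r}$. Solving $e_{\rm I}=e_{\rm II}$, $e_{\rm I}=e_{\rm III}$, $e_{\rm II}=e_{\rm III}$ for $r$ as a function of $\eta$ yields precisely $r_{12}(\eta)=\eta/2$, $r_{13}(\eta)$, and $r_{23}(\eta)$ as written, and checking which of the three is smallest in each region of the $(r,\eta)$-plane gives the phase diagram; the triple point $\eta_{\rm triple}$ is where all three curves meet, found by solving $r_{12}=r_{13}$ (a quadratic-type equation in $\sqrt{4+2\eta}$). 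Establishing strict inequality off the separation curves gives uniqueness, and on the curves the two relevant types coincide in energy (and, for $r_{12}$, literally coincide as configurations). Finally I would verify the boundary behavior $\eta\to 0$ (Type III degenerates to a single square) and $\eta\to 2$ (Type I degenerates to two squares) as a consistency check.

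The main obstacle is the slicing reduction in Step 1 — specifically, showing rigorously that an arbitrary finite perimeter minimizer can be replaced, without increasing energy, by one whose slices in both coordinate directions are single intervals, and that the interface term transforms correctly under slicing. One has to handle the measure-theoretic subtleties of reduced boundaries, the fact that $A$ and $B$ are disjoint only up to null sets, the possibility that a slice of $A$ meets several "columns," and the bookkeeping that ensures the shared-boundary length is not overcounted when passing between the two slicing directions. Once this structural reduction is in place, everything downstream is finite-dimensional calculus; the technical lemmas deferred to Section \ref{sec:proof_lemmas} presumably carry out exactly this slicing bookkeeping, and the two propositions of Section \ref{sec:proof_propositions} presumably correspond to the two regimes $\eta\ge\eta_{\rm triple}$ and $\eta<\eta_{\rm triple}$.
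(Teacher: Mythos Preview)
Your high-level plan---slicing, then finite-dimensional optimization, then comparison of the three energies---matches the paper's overall architecture, and the comparison phase (your third paragraph) is essentially what the paper does in Subsection~2.2. But the central reduction you propose, ``replace $A$ and $B$ by sets that are interval-valued in each direction'', is a genuine gap, not mere bookkeeping. Steiner-type symmetrization of one phase can destroy disjointness with the other, and simultaneous symmetrization does not obviously control the interface term $\ell_1(\partial^*A\cap\partial^*B)$, which enters with the favorable sign $\eta-2<0$ and can therefore \emph{increase} under rearrangement. The paper never performs such a reduction. Instead it uses slicing only to produce the inequality \eqref{eq: part one slicing}, which bounds $E(A,B)$ below by $\int E_{1D}(a(t),b(t))\,dt + m(2+\eta p)$; all further work is to show that this lower bound, as a functional of the slice profiles $a(t),b(t)$ and the projection data $m,m_A,m_B,p$, dominates $\min\{E_{\rm I},E_{\rm II},E_{\rm III}\}$. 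This requires the auxiliary functions $g_A^r,g_B^r$ of \eqref{eq:gagb}--\eqref{eq:gagb2} and the rather delicate Lemmas~\ref{lem:appendix}--\ref{lemma: letztes lemma}, together with the case split in Lemmas~\ref{lemma: biggi}--\ref{lemma: smalli} governed by the projection inequalities \eqref{eq: B1}--\eqref{eq: B2}. None of this is the ``measure-theoretic subtleties'' you anticipate; it is the actual content of the proof.

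You also misidentify the role of the two propositions: they are not split by the regimes $\eta\gtrless\eta_{\rm triple}$ but by the candidate type---Proposition~\ref{prop: Type1} handles the region where Type~I wins ($r>\max\{r_{12},r_{13}\}$), and Proposition~\ref{prop: Type2,3} handles the complementary region and shows the minimum is $\min\{E_{\rm II},E_{\rm III}\}$, with the choice between II and III decided by the projection condition on $m_B$ in \eqref{eq: length-B1}--\eqref{eq: length-B2}. The $\eta$-regime split appears only at the very end, in the short derivation of Theorem~\ref{th:main} from the two propositions. Finally, even if you could reduce to rectangles, your ``three combinatorial ways'' understates the remaining work: two rectangles can share partial edges in many configurations, and ruling out all but the three canonical ones is itself nontrivial---this is again absorbed into the lower-bound machinery rather than handled by enumeration.
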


 
\subsection{Heuristics for the shape of the phase-separation functions}

In this subsection, we give  some justification \EEE  for the 
expressions of the phase-separation functions $r_{12}$, $r_{13}$, and
$r_{23}$. \EEE Let  $r= V_B/V_A \in (0,1]$, and  rewrite \EEE the
energy corresponding to the three types  as \EEE
\begin{align}\label{eq: ener1}
  E_{\rm I}(r,\eta) &= 2 \sqrt{4 + 2\eta} \sqrt{1+r}\sqrt{V_A},\\[1mm]
  \label{eq: ener2}
  E_{\rm II}(r,\eta) &=
                       \left\{
                       \begin{array}{ll}
                       (4 + 2 \sqrt{2r\eta})\sqrt{V_A} & \text{ if $r
                                                         \le
                                                         {\eta}/{2}$,}
  \\  + \infty & \text{ else,}
                       \end{array}  \right.
  \\[1mm]
  \label{eq: ener3}
E_{\rm III}(r,\eta) &= (4\sqrt{1+r} + 2\eta \sqrt{r})\sqrt{V_A},
\end{align}
see \eqref{eq:case1}, \eqref{eq:case2}, and \eqref{eq:case3}. Here, we  stress that
Type II  is defined for $r \le {\eta}/{2}$ only. We therefore set $E_{\rm II}(r,\eta) = + \infty$ if $r > {\eta}/{2}$.

By direct computation we now show that 
\begin{align}
 \label{I-II} E_{\rm I}(r,\eta) \le E_{\rm II}(r,\eta) \ &\Leftrightarrow \ r \ge
   r_{12}(\eta),\\
  \label{I-III} E_{\rm I}(r,\eta) \le E_{\rm III}(r,\eta) \ &\Leftrightarrow \ r \ge
    r_{13}(\eta),\\
  \label{II-III} E_{\rm II}(r,\eta) \le E_{\rm III}(r,\eta) \ &\Leftrightarrow \
  \eta \ge  \eta_{\rm triple} \ \text{and} \ r \ge r_{23}(\eta), 
\end{align}
i.e., the phase-separation functions partition the phase diagram into three regions corresponding to the respective optimal type.  

%




Let us first compare the energies corresponding to {\rm Types I} and
{\rm II}. To see this, we  investigate \EEE when the two energies
are equal,  getting \EEE
\begin{equation}
   E_{\rm I}(r,\eta) = E_{\rm II}(r,\eta) \ \Leftrightarrow \
  2 \sqrt{4 + 2\eta} \sqrt{1+r} = 4 + 2 \sqrt{2r\eta}. \EEE
\end{equation}
 Elementary algebraic manipulations show that the latter holds if
and only if \EEE 
\begin{equation}
(\sqrt{\eta} - \sqrt{2r})^2 = 0,
\end{equation}
 namely, 
if and only if $r = r_{12}(\eta)= \EEE {\eta}/{2}$. \EEE As a
consequence, the sign of $E_{\rm I}(r,\eta) - E_{\rm II}(r,\eta)$ can
only change at $r = {\eta}/{2}$.  As  $E_{\rm I}(r,\eta) - E_{\rm
  II}(r,\eta)$ \EEE is positive at $r = 0$,  one has that  $E_{\rm
  I}(r,\eta) < E_{\rm II}(r,\eta)$ if and only if $r>
r_{12}(\eta)$. Note that, for $r=r_{12}(\eta)=\eta/2$ the two types
coincide. In particular, one has \eqref{I-II}. \EEE 

 We now \EEE  compare the energies for {\rm Types I} and {\rm
  III}.  One has that  \EEE 
\begin{equation}
 E_{\rm I}(r,\eta) = E_{\rm III}(r,\eta)  \ \Leftrightarrow \ 2 \sqrt{4
  + 2\eta} \sqrt{1+r} = 4\sqrt{1+r} + 2\eta \sqrt{r} .
\end{equation}
 The latter can be rewritten as \EEE
\begin{equation}
2\eta \sqrt{r} = (2 \sqrt{4 + 2\eta} - 4) \sqrt{1+r}.
\end{equation}
 This equation is uniquely solved by \EEE
\begin{equation}
r = \frac{8 + 2\eta - 4 \sqrt{4 + 2\eta}}{\eta^2 - 8 - 2\eta + 4
  \sqrt{4 + 2\eta}}  \equiv r_{13}(\eta). \EEE
\end{equation}
 The  function $r_{13}$ is monotone decreasing.  Its graph
intersects \EEE 
the line $r = {\eta}/{2}$ at 
\begin{equation}
r_{ \rm triple} = \frac{1 + 2\sqrt{2} - \sqrt{5 + 4\sqrt{2}}}{2} \sim 0.28197.
\end{equation}
Moreover,  one has that $r_{13}(\eta)\to 1/3$ as $\eta \to
0$. \EEE 
The sign of $E_{\rm I}(r,\eta) - E_{\rm
  III}(r,\eta)$ can only change at $r = r_{13}(\eta)$.  Since $E_{\rm I}(r,\eta) - E_{\rm
  III}(r,\eta)$ \EEE 
is positive at $r = 0$,  one has that
$E_{\rm I}(r,\eta) < E_{\rm
  III}(r,\eta)$ if and only if $r > r_{13}(\eta)$ and that the two
energies coincide for $r = r_{13}(\eta)$. In particular, \eqref{I-III}
holds.  \EEE 

Finally, we compare the energies for {\rm Types II} and {\rm
  III}.  Again, we \EEE
check the equality case, \EEE i.e.,
\begin{equation}
  E_{\rm II}(r,\eta) = E_{\rm III}(r,\eta)  \ \Leftrightarrow \
  4 + 2 \sqrt{2r\eta} = 4\sqrt{1+r} + 2\eta \sqrt{r}.
\end{equation}
 Solving the latter equation for $r$ we get   
\begin{equation}
r = \bigg( \frac{4\eta - 4 \sqrt{2\eta}}{2 \eta + \eta^2 - 2\eta
  \sqrt{2\eta} - 4} \bigg)^2  \equiv \EEE r_{23}(\eta). 
\end{equation}
 The
sign of $E_{\rm II}(r,\eta) - E_{\rm III}(r,\eta)$  changes at \EEE
$r = r_{23}(\eta)$  only. As  $E_{\rm II}(r,\eta) - E_{\rm
  III}(r,\eta)$ \EEE  is positive for small $r > 0$,   we have that
$E_{\rm II}(r,\eta) < E_{\rm III}(r,\eta)$ if and only if Type II is admissible, namely $r \leq \eta/2$, and \EEE $r > r_{23}(\eta)$. For $r = r_{23}(\eta)$ the two energies coincide,  so that \eqref{II-III} holds. \EEE


 Note that in the range $r\in [0,1]$ the graphs of $r_{12}$ and
$r_{23}$ cross at $(0,0)$ and $(\eta_{\rm triple},r_{\rm triple})$
only. As the graphs of $r_{12}$ and
$r_{13}$ cross at  $(\eta_{\rm triple},r_{\rm triple})$, as well, one
has that indeed $(\eta_{\rm triple},r_{\rm triple})$ is a triple
point. For this choice of parameters, all types of minimizers appear
(note however that Type I and Type II coincide). One can also check that $r'_{13}(\eta_{\rm
  triple})=r'_{23}(\eta_{\rm triple})$, see Figure \ref{diagram}.

By choosing $\eta=1$, this  covers the result in \cite{Duncan0}. In
particular, the critical value of $r$ separating Type II and Type III
from \cite{Duncan0} turns out to be exactly $r_{23}(1)$.

\section{Proof of Theorem \ref{th:main}}\label{sec:proof}

\subsection{Structure of the proof} \label{sec:structure}

The proof of Theorem \ref{th:main} is obtained as a result of a
cascade of intermediate statements, according to the diagram in Figure
\ref{proof}.
\pgfdeclareimage[width=0.9 \linewidth]{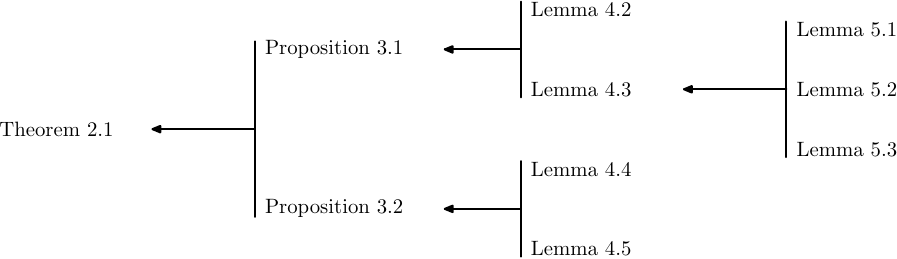}{proof}
\begin{figure}[h]
  \centering
  \pgfuseimage{proof}
\caption{Structure of the proof of Theorem \ref{th:main}.}\label{proof}
\end{figure}
In particular, Theorem \ref{th:main} is proved in this section via the
Propositions \ref{prop: Type1} and \ref{prop: Type2,3}. These propositions
are then proved in Section \ref{sec:proof_propositions}, also on the
basis of a series of lemmas. All lemmas are then proved in Section
\ref{sec:proof_lemmas}.

Note that the technical core of the proof is indeed represented by the
lemmas. In particular, Lemma \ref{lemma1} plays a pivotal role. Still, the combinatorial character of the construction makes the
argument quite involved. We hence resorted in recollecting the
findings in Propositions \ref{prop: Type1} and \ref{prop: Type2,3},
where the reader can directly check that all parameter regimes are
duly 
covered. 

\subsection{Two propositions}  As mentioned in Subsection
\ref{sec:structure}, Theorem \ref{th:main} can be proved from two
propositions, which we present in this  subsection. \EEE

Choose a coordinate direction,  to which we refer with no loss of
generality as {\it vertical},  and denote the length of the
projection of $A\cup B$ on the corresponding {\it
  horizontal} coordinate direction by $m$, as well as the length of the projection
of the two phases $A$ and $B$   by $m_A$ and $m_B$,
respectively.  (The terms {\it vertical} and {\it horizontal}
correspond to Figure \ref{types}.) As before, we let $r = \frac{V_B}{V_A}$.  \EEE Recall also the energies defined in \eqref{eq: ener1}--\eqref{eq: ener3}.

We state the following two propositions.

\begin{proposition}[{\rm Type I}]\label{prop: Type1}
For every configuration $(A,B)$, if one of the following conditions 

\begin{enumerate}
\item[\rm (a)] $r > r_{12}(\eta)$ and $r > r_{13}(\eta)$;

\item[\rm (b)] $r > r_{12}(\eta)$, $r = r_{13}(\eta)$ and there exists a coordinate direction such that
\begin{equation}\label{eq: length-B2,5}
m_B > \frac{2-\eta}{2\eta}\big(   (2+\eta)m_A - 2m   \big),
\end{equation}
\end{enumerate}

{\flushleft holds, \EEE then}
\begin{equation}\label{eq: energy-type1}
E(A,B) \ge E_{\rm I}(r,\eta)
\end{equation} 
and equality in \eqref{eq: energy-type1} implies that $(A,B)$ is a configuration of {\rm Type I}.
\end{proposition}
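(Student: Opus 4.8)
The strategy is to reduce the lower bound on $E(A,B)$ to a one-dimensional statement obtained by slicing along the chosen (vertical) coordinate direction. First I would fix the coordinate direction and write $x=(x_1,x_2)$ with $x_1$ horizontal. For $\mathcal L^1$-a.e.\ horizontal coordinate $t$, consider the vertical slices $A_t=\{s:(t,s)\in A\}$ and $B_t=\{s:(t,s)\in B\}$, and let $a(t)=\mathcal L^1(A_t)$, $b(t)=\mathcal L^1(B_t)$. By the structure theorem for finite perimeter sets and the coarea/slicing formula for $\ell_1$ length, one has
\begin{align}\label{eq:plan-slice}
\ell_1(\partial^*A) &= \int_{\mathbb R}\#(\partial^*A_t)\,{\rm d}t + \int_{\mathbb R}|{\rm D}\mathbf 1_A \cdot e_1|\text{-type term},
\end{align}
more precisely $P_1(A)$ splits into the horizontal part $\int |a'(t)|$-type contributions coming from the variation of the vertical extent, plus the vertical part $\int \#(\partial^* A_t)\,{\rm d}t \ge 2 m_A$ (since each nonempty slice contributes at least two boundary points), and similarly for $B$; the interface term $\ell_1(\partial^*A\cap\partial^*B)$ is controlled from above by the overlap of the horizontal projections. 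Collecting these, and using the isoperimetric-type inequality on each slice together with convexity, I expect to reduce \eqref{eq: energy-type1} to showing that a certain function of $m$, $m_A$, $m_B$, $V_A$, $V_B$, $\eta$ is bounded below by $E_{\rm I}(r,\eta)$, with equality forcing all slices to be intervals of constant length — i.e.\ rectangles.

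\textbf{Key steps, in order.} (1) Set up the slicing and record the decomposition of $P_1(A)$, $P_1(B)$, and $\ell_1(\partial^*A\cap\partial^*B)$ into horizontal and vertical parts, obtaining a lower bound $E(A,B)\ge F(m,m_A,m_B;V_A,V_B,\eta)$ for an explicit $F$; this is where Lemma \ref{lemma1} (the pivotal technical lemma) is invoked to handle the bookkeeping of how much interface can be created per unit horizontal length and to bound the vertical contributions from below by $2m_A$ and $2m_B$ minus the interface. (2) Use $\mathcal L^2(A)=\int a(t)\,{\rm d}t = V_A$, $\mathcal L^2(B)=V_B$ together with the bound that the vertical extent of $A\cup B$ over its projection is at least $(V_A+V_B)/m$, and Cauchy–Schwarz / AM–GM, to eliminate the slice data and get a bound purely in $m,m_A,m_B$. (3) Minimize the resulting expression over the admissible geometric parameters. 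Here hypothesis (a) — namely $r>r_{12}(\eta)$ and $r>r_{13}(\eta)$ — is exactly what guarantees that the minimum of this reduced functional equals $E_{\rm I}(r,\eta)$ and is attained only in the Type I configuration; the point is that the competing critical values of the reduced functional correspond to Types II and III, and the parameter conditions push those strictly above $E_{\rm I}$. (4) Under hypothesis (b), where $r=r_{13}(\eta)$ so Types I and III tie on energy, the extra slicing inequality \eqref{eq: length-B2,5} is used to break the tie in favor of Type I: it rules out the Type III critical point of the reduced functional by forcing $m_B$ large relative to $m_A$ and $m$, which is incompatible with the nested-square geometry of Type III. (5) Finally, trace the equality cases: equality in the slice-wise isoperimetric bounds forces each slice of $A$ and of $B$ to be an interval; equality in Cauchy–Schwarz forces these intervals to have length independent of $t$ (constant vertical extent $M$); and equality in the interface bound forces $A$ and $B$ to be adjacent rectangles sharing a full vertical edge — i.e.\ Type I with the stated $M=\sqrt{2(V_A+V_B)/(2+\eta)}$.

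\textbf{Main obstacle.} The hard part is Step (1)–(2): correctly accounting for the interplay between the horizontal perimeter contributions (variation of slice lengths), the vertical contributions, and the shared interface, so that the lower bound $F$ is sharp — if one is wasteful anywhere, the reduced functional will not bottom out at $E_{\rm I}(r,\eta)$ and the equality characterization fails. This is precisely the content encapsulated in Lemma \ref{lemma1}, and the role of Proposition \ref{prop: Type1} is to organize its consequences so that the two parameter regimes (a) and (b) are seen to be covered. A secondary subtlety is that the reduced optimization is not smooth at the boundary $r=\eta/2$ where Type II degenerates, so the case analysis dictated by (a) versus (b) must be handled with some care to ensure no regime is missed; but that is routine once the sharp lower bound $F$ is in hand.
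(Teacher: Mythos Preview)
Your outline captures the right high-level architecture --- slice, reduce to a finite-dimensional optimization, invoke Lemma~\ref{lemma1}, and track equality --- but there is a genuine gap in the preliminary reduction, and a misreading of how hypothesis (b) enters.

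\textbf{The missing reduction.} Lemma~\ref{lemma1} carries the hypothesis $p\le 1/2$, and you never explain how to arrange this. The paper's proof first invokes Lemma~\ref{lem:psmaller13}: if $p+p'>2(\sqrt{4+2\eta}-2)/\eta$ then already $E(A,B)>E_{\rm I}(r,\eta)$, so one may assume $\min\{p,p'\}\le 1/2$ and pick the corresponding coordinate direction. Without this step you cannot invoke Lemma~\ref{lemma1} at all, and your ``minimize over $m,m_A,m_B$'' step will not bottom out at $E_{\rm I}(r,\eta)$ --- the bound $(2+\eta p)m + \tfrac{2+\eta}{(1+\eta p/2)m}(V_A+V_B)$ genuinely requires $p$ to be controlled.

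\textbf{The role of condition (b).} You describe \eqref{eq: length-B2,5} as ``ruling out the Type III critical point of the reduced functional,'' but that is not how it is used. In the paper, condition (b) is only activated in the degenerate sub-case $r=r_{13}(\eta)$, $m_A=m$, $\mathcal L^1(\mathcal T_B^{\rm pure})=0$, where Lemma~\ref{lemma1} demands the extra hypothesis $p'>\tfrac{2-\eta}{2}$. One uses \eqref{eq: length-B2,5} (which reads $m_B/m>\tfrac{2-\eta}{2}$ when $m_A=m$) together with $p\le 1/2 < \tfrac{2-\eta}{2}$ (valid since $\eta\le\eta_{\rm triple}$ here) to force the inequality to hold in the \emph{other} coordinate direction, i.e.\ for $p'$. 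So (b) is not a tie-breaker at the level of the reduced optimization; it is a technical input allowing Lemma~\ref{lemma1} to be applied in a borderline case where its conclusion would otherwise be unavailable.

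Finally, your description of the slicing bound (``$2m_A$ and $2m_B$ minus the interface'', ``$\int|a'(t)|$-type contributions'') does not match the actual estimate used, which is $E(A,B)\ge\int E_{1D}(a(t),b(t))\,{\rm d}t + m(2+\eta p)$ from Lemma~\ref{lem:symmetrization-new}; getting this sharp is what makes the subsequent optimization land exactly on $E_{\rm I}(r,\eta)$.
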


\begin{proposition}[{\rm Type II and Type III}]\label{prop: Type2,3}
For every configuration $(A,B)$, if $r\le \max\lbrace r_{12}(\eta), r_{13}(\eta)\rbrace$, we have
\begin{equation}\label{eq: energy-type2-3}
E(A,B) \ge  \min\big\{ E_{\rm II}(r,\eta), E_{\rm III}(r,\eta)\big\}. 
\end{equation}
The equality cases can be described more precisely as follows. Under the condition
\begin{enumerate}
\item[\rm (c)] $r \le r_{12}(\eta)$ and there exists a coordinate direction such that 
\begin{align}\label{eq: length-B1}
m_B > \frac{2-\eta}{2\eta}\big( (2+\eta)m_A - 2m \big) 
\end{align}
\end{enumerate}
{\flushleft we} have $E(A,B) \ge E_{\rm II}(r,\eta)$ and equality  implies that $(A,B)$ is a configuration of {\rm Type II}. Note that along the line $r = r_{12}(\eta)$ {\rm Type II} coincides with {\rm Type I}.

Furthermore, under the condition
\begin{enumerate}
\item[\rm (d)] $r\le \max\lbrace r_{12}(\eta), r_{13}(\eta)\rbrace$ and for both coordinate directions it holds that
\begin{align}\label{eq: length-B2}
m_B \le   \frac{2-\eta}{2\eta}\big(   (2+\eta)m_A - 2m   \big)
\end{align}
\end{enumerate}

{\flushleft we have that} $E(A,B) \ge E_{\rm III}(r,\eta)$ and equality   implies that $(A,B)$ is a configuration of {\rm Type III}.

Finally, the condition
\begin{enumerate}
\item[\rm (e)] $r > r_{12}(\eta)$,  $r < r_{13}(\eta)$   and  there
  exists a coordinate direction such that   
\begin{align}\label{eq: length-B12}
   m_B >   \frac{2-\eta}{2\eta}\big(   (2+\eta)m_A - 2m   \big), 
   \end{align}
\end{enumerate}

{\flushleft implies that} $E(A,B) > E_{\rm III}(r,\eta)$ and as a consequence $(A,B)$ cannot be a minimal configuration.
\end{proposition}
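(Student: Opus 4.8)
The plan is to reduce every assertion to a one-dimensional slicing estimate, following the blueprint already set up for Proposition \ref{prop: Type1}. The central object is the slicing of $A$ and $B$ by lines orthogonal to a fixed coordinate direction: for a.e.\ horizontal coordinate $t$, the slice $A_t$ (resp.\ $B_t$) is a finite union of intervals, and the Vol'pert-type structure theorem for sets of finite perimeter relates $\mathcal{H}^0$ of the slice boundaries, integrated in $t$, to the $\ell_1$-perimeters $P_1(A)$, $P_1(B)$, and to $\ell_1(\partial^*A\cap\partial^*B)$. Concretely I expect that the slicing identity together with Lemma \ref{lemma1} (the pivotal technical lemma flagged in Subsection \ref{sec:structure}) yields, after integrating a pointwise inequality in $t$ and applying Cauchy--Schwarz / AM--GM to handle the $\sqrt{V_A}$ and $\sqrt{V_B}$ terms, the bound
\begin{equation}\label{eq: plan-master}
E(A,B) \ge \min\big\{ E_{\rm II}(r,\eta),\, E_{\rm III}(r,\eta)\big\},
\end{equation}
valid whenever $r \le \max\{r_{12}(\eta), r_{13}(\eta)\}$. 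The dichotomy in the minimum will come precisely from the sign of the quantity $(2+\eta)m_A - 2m - \tfrac{2\eta}{2-\eta} m_B$ appearing in \eqref{eq: length-B1}, \eqref{eq: length-B2}, \eqref{eq: length-B12}: one sign pushes the slice-wise estimate toward the "Type II geometry" (one phase a square, the other a slab attached along one side), the other toward the "Type III geometry" ($A\cup B$ a square with $B$ a corner square).

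Next I would treat the three equality/strict-inequality refinements (c), (d), (e) separately, but all by the same mechanism: in each case the hypothesis on $m_B$ versus $\tfrac{2-\eta}{2\eta}((2+\eta)m_A-2m)$ selects which of the two lower bounds in \eqref{eq: plan-master} is the operative one. Under (c), the inequality $m_B > \tfrac{2-\eta}{2\eta}((2+\eta)m_A - 2m)$ (for some coordinate direction) forces, via the slice estimate, $E(A,B)\ge E_{\rm II}(r,\eta)$; tracking the equality case in Cauchy--Schwarz and in Lemma \ref{lemma1} forces every slice of $A$ to be a single interval of fixed length $\sqrt{V_A}$ and every slice of $B$ a single interval abutting $A$, i.e.\ Type II. Under (d), the reverse inequality in \emph{both} coordinate directions is exactly the rigidity input needed to conclude that $A\cup B$ has square slices in both directions, hence $A\cup B$ is a square and $B$ sits in a corner, i.e.\ Type III; the equality analysis is the same bookkeeping. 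For (e), I would combine (c)'s conclusion $E(A,B)\ge E_{\rm II}(r,\eta)$ with the strict comparison $E_{\rm II}(r,\eta) > E_{\rm III}(r,\eta)$, which holds precisely when $r < r_{23}(\eta)$ or $\eta < \eta_{\rm triple}$ by \eqref{II-III}; since the hypotheses $r > r_{12}(\eta)$ and $r < r_{13}(\eta)$ are only jointly satisfiable for $\eta < \eta_{\rm triple}$ (where $r_{13} > r_{12}$), the strict inequality $E(A,B) > E_{\rm III}(r,\eta)$ follows, and $(A,B)$ cannot be optimal since the Type III configuration \eqref{eq:case3} is admissible with strictly smaller energy.

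The main obstacle I anticipate is not \eqref{eq: plan-master} itself but the rigidity bookkeeping in the equality cases: the slicing estimate degrades information (it only sees one-dimensional traces), so upgrading "equality in the integrated inequality" to "the configuration is genuinely of Type II / Type III up to isometries preserving the axes" requires propagating the slice-wise rigidity across $t$ and across \emph{both} coordinate directions, and ruling out measure-zero pathologies (slices that are intervals for a.e.\ $t$ but whose endpoints do not assemble into straight edges). I expect this to be handled by the fact that equality in Lemma \ref{lemma1} pins down the slice length \emph{and} connectedness for a.e.\ $t$, so that $A$ (resp.\ $A\cup B$) is, up to null sets, a product of intervals — and then an elementary argument (or a direct citation to the corresponding step in the proof of Proposition \ref{prop: Type1}) identifies it with the named configuration. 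A secondary, purely algebraic point to verify carefully is that the threshold $\tfrac{2-\eta}{2\eta}((2+\eta)m_A - 2m)$ is exactly the break-even value between the Type II and Type III slice-wise energies; this is the computation that makes conditions (c)--(e) dovetail with the phase-separation functions, and it must reproduce $r_{12}$ and $r_{23}$ on the nose.
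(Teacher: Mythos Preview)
Your overall architecture---slicing, AM--GM on the resulting one-variable expressions, then equality analysis---matches the paper's, and cases (c) and (d) would eventually go through along these lines (though the paper relies on two purpose-built lemmas, Lemma~\ref{lemma: biggi} and Lemma~\ref{lemma: smalli}, rather than on Lemma~\ref{lemma1}, which is primarily a Type~I tool). However, your treatment of case (e) has a genuine gap.

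You propose to obtain case (e) by ``combining (c)'s conclusion $E(A,B)\ge E_{\rm II}(r,\eta)$ with the strict comparison $E_{\rm II}(r,\eta)>E_{\rm III}(r,\eta)$.'' This does not work. First, case (c) explicitly assumes $r\le r_{12}(\eta)$, whereas (e) assumes $r>r_{12}(\eta)$, so you cannot invoke (c). Second, for $r>r_{12}(\eta)=\eta/2$ the quantity $E_{\rm II}(r,\eta)$ is $+\infty$ by definition \eqref{eq: ener2}, so the chain of inequalities is vacuous. If instead you mean to use the \emph{formal} expression $4\sqrt{V_A}+2\sqrt{2\eta V_B}$ (the unconstrained AM--GM minimum coming from the ``big projection'' slicing bound), this is \emph{not} always larger than $E_{\rm III}(r,\eta)$ in the regime $r_{12}(\eta)<r<r_{13}(\eta)$: for instance at $\eta=0.3$, $r=0.3$ one computes $4+2\sqrt{2\eta r}\approx 4.85$ while $4\sqrt{1+r}+2\eta\sqrt{r}\approx 4.89$. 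So the naive lower bound can fall \emph{below} $E_{\rm III}$, and the argument collapses.

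The paper handles case (e) (its Step~4) by a substantially more involved case distinction. It splits according to whether $m_A$ or $m_B$ exceeds $\frac{2+\eta p}{2+\eta}m$; in one subcase it invokes Lemma~\ref{lemma1} (via Remark~\ref{rem: for later}) to get the Type~I bound and then uses $r<r_{13}(\eta)$ to conclude this exceeds $E_{\rm III}$; in the remaining subcases it re-runs the AM--GM optimization from Lemma~\ref{lemma: biggi} and Lemma~\ref{lemma: smalli} but now under the \emph{constraint} $y\le x$ forced by the projection inequalities, which shifts the minimum and recovers the strict inequality after several explicit computations with the functions $H_\eta(r)$. The point is precisely that the unconstrained optimum you implicitly rely on is not admissible when $r>\eta/2$, and tracking the constrained optimum is where all the work lies.
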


  As mentioned, the proofs of Propositions \ref{prop: Type1}
and \ref{prop: Type2,3} are presented in Section
\ref{sec:proof_propositions}.
\EEE

 The lower bounds \eqref{eq: energy-type1} and \eqref{eq:
  energy-type2-3}
 are the main point of the statements of Propositions \ref{prop:
  Type1} and \ref{prop: Type2,3}. \EEE Along the estimates, we will
also analyze when inequalities are strict. This then leads to the
characterization of the equality cases. Whereas Type I is treated on
its own, it is convenient to treat the lower bound of Type II and Type
III at the same time. The type of the ground state, namely Type II or
Type III, is then determined by the length of the projection of the
 $B$ \EEE phase, see \eqref{eq: length-B1}--\eqref{eq: length-B2}. \EEE

 We explicitly remark that Propositions  \ref{prop: Type1} and \ref{prop: Type2,3} cover
the parameter range $(r,\eta)\in (0,1)\times (0,2)$ entirely, by distinguishing
a number of cases. \EEE 
The situation is  illustrated \EEE in Figure
\ref{fig:propositions}. The portions  of the parameter space which
are far from boundaries \EEE 
are covered by cases {\rm (a), (c),} and {\rm (d)}  for Types I,
II, and III, \EEE respectively. The borderline case between {\rm Type
  I} and {\rm Type II} is covered by case {\rm (c)} (note that in this
case the two  types \EEE coincide  so that no phase transition actually
occurs). The \EEE  borderline case between {\rm Type I} and {\rm Type
  III} is covered by cases {\rm (b)} and {\rm (d)}.  The \EEE
borderline case between {\rm Type II} and {\rm Type III} \EEE is covered by
cases {\rm (c)} and {\rm (d)}  and \EEE the triple point is
covered by cases {\rm (c)} and {\rm (d)}.  Eventually, case {\rm
  (e)} is  proved  to \EEE be excluded as it does not correspond to a
minimizer. \EEE 

\begin{figure}[h]
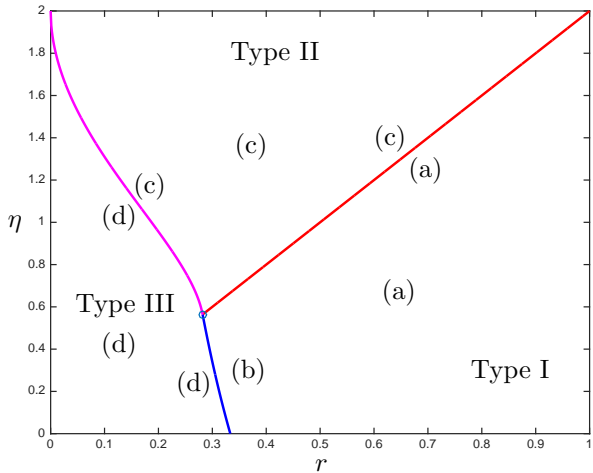

\centering 
\begin{overpic}[trim=0cm 75mm 0cm 7cm,
  width=100mm]{diagram.pdf}
  
\put(110,150){ Type II}
\put(200,30){ Type I}
\put(52,55){ Type III}

\put(115,115){(c)}
\put(170,60){(a)}
\put(65,40){(d)}

\put(167,118){(c)}
\put(180,106){(a)}

\put(77,100){(c)}
\put(65,88){(d)}

\put(110,30){ (b) \EEE}
\put(93,25){(d)}

\put(145,-5){$r$}
\put(30,87){$\eta$}
\end{overpic}

\vspace{0.4cm}

\caption{The three possible types from Figure \ref{types} and the corresponding cases in Propositions \ref{prop: Type1} and \ref{prop: Type2,3}.} 
\label{fig:propositions} 
\end{figure}

\EEE


 We now show that \EEE
  Propositions \ref{prop: Type1}--\ref{prop: Type2,3} imply Theorem \ref{th:main}. 

\begin{proof}[Proof of Theorem \ref{th:main}]
We first suppose that  $\eta \in [\eta_{\rm triple},2)$. 
If $r >  r_{12}(\eta)$,  noting that $r_{12}(\eta) \ge r_{13}(\eta)$ in this regime,   \EEE Proposition \ref{prop: Type1} implies that  minimizers are of {\rm Type I}. If $r \le  r_{12}(\eta)$, Proposition \ref{prop: Type2,3} yields that minimizers have energy $ \min\big\{ E_{\rm II}(r,\eta), E_{\rm III}(r,\eta)\big\}$. If $r_{23}(\eta) < r \le r_{12}(\eta)$, we get $ \min\big\{ E_{\rm II}(r,\eta), E_{\rm III}(r,\eta)\big\} = E_{\rm II}(r,\eta)$, see Figure~\ref{diagram}. Therefore, minimizers satisfy inequality \eqref{eq: length-B1} and are   of {\rm Type II}. Note that in the borderline case  $r =  r_{12}(\eta)$ minimizers are actually of {\rm Type I} since here {\rm Type I} and {\rm Type II} coincide. In a similar fashion, if $r < r_{23}(\eta)$, we have  $ \min\big\{ E_{\rm II}(r,\eta), E_{\rm III}(r,\eta)\big\} = E_{\rm III}(r,\eta)$, which implies that minimizers satisfy \eqref{eq: length-B2} and are of  {\rm Type III}. In the borderline case $r = r_{23}(\eta)$, a minimizer can be both of {\rm Type II} and {\rm Type III}, depending on whether \eqref{eq: length-B1} or \eqref{eq: length-B2} holds.  

 Let us now suppose that  $\eta \in (0,\eta_{\rm triple})$. 
 As before, by Proposition \ref{prop: Type1}, minimizers are of {\rm
   Type I} for $r >  r_{13}(\eta)$, noting that $r_{13}(\eta) \ge r_{12}(\eta)$ in this regime.   \EEE If $r < r_{13}(\eta)$,
 Proposition~\ref{prop: Type2,3} implies that minimizers have energy $
 \min\big\{ E_{\rm II}(r,\eta), E_{\rm III}(r,\eta)\big\}$, and then Figure \ref{diagram} gives $ 
 \min\big\{ E_{\rm II}(r,\eta), E_{\rm III}(r,\eta)\big\} = E_{\rm
   III}(r,\eta)$.  \EEE This yields that minimizers satisfy   \eqref{eq:
   length-B2} and are of {\rm Type III}. In the borderline case  $r =
 r_{13}(\eta)$, both {\rm Type I} and {\rm Type III} are optimal,
 depending on whether a minimizer satisfies \eqref{eq: length-B2,5} or
 \eqref{eq: length-B2}. This concludes the proof.
 \end{proof}

\section{Proof of Propositions \ref{prop: Type1} and \ref{prop: Type2,3}}\label{sec:proof_propositions} 

\subsection{Notation and  preliminaries\EEE} \label{sec: prel}

As before, we  denote  by $m$, $m_A$, and $m_B$ \EEE the length of the
 projection of $A\cup B$, $A$, and $B$, \EEE respectively. We
further set 
\begin{align}\label{p not}
p = \frac{m_A + m_B}{m} - 1.
\end{align}
We recall a slicing result for sets of finite perimeter, see for instance \cite[Section 18.3]{Maggi}. Suppose that $E$ is a bounded set of finite perimeter. We denote by $E_t$ the horizontal slice of the set $E$ at level $t$ in direction $x_2$, i.e.,
\begin{equation}
E_t = E \cap \{ (x_1,t): x_1 \in \mathbb{R} \}.
\end{equation}
Then, almost every horizontal slice $E_t$ is also a set of finite perimeter in $\mathbb{R}$, i.e., a union of intervals. 

%
%
%
%
%
%

We consider horizontal  slices  for
$t \in {\mathbb R}$ and set 
\begin{align}\label{eq: a,b}
a(t) := \mathcal{L}^1 (A_t ), \quad b(t) := \mathcal{L}^1 (B_t),
\end{align}
 where $\mathcal L^1$ stands for the one-dimensional Lebesgue
measure. \EEE
By Fubini's Theorem we have
$$V_A = \int_\R a(t) \, {\rm d}t, \quad \quad V_B =  \int_\R b(t) \, {\rm d}t. $$
 For convenience, we differentiate different slices as follows: \EEE we  define the  \textit{pure slices} \EEE $ \mathcal{T}^{\rm pure}_A \EEE = \{ t  \in
{\mathbb R} \EEE: a(t) > b(t) =
0\}$ and $ \mathcal{T}^{\rm pure}_B \EEE = \{  t  \in
{\mathbb R} \EEE:  \EEE b(t) > a(t) = 0 \}$, as well as the  \textit{mixed slices} 
\begin{align}\label{eq: T'}
 \mathcal{T}^{\rm mix} \EEE =   \{  t  \in
{\mathbb R} \EEE:  \EEE a(t) +  b(t) > 0 \} \EEE  \setminus ( \mathcal{T}^{\rm pure}_A \EEE \cup  \mathcal{T}^{\rm pure}_B \EEE).
\end{align}
For the respective areas, we introduce the notation
\begin{align}\label{eq: volume not}
   U^{\rm pure}_A  = \int_{  \mathcal{T}^{\rm pure}_A}  a(t) \, {\rm d}t  , \quad U^{\rm pure}_B  =  \int_{  \mathcal{T}^{\rm pure}_B}    b(t)\, {\rm d}t , \quad   U^{\rm mix} =   \int_{ \mathcal{T}^{\rm mix}}  (a(t) + b(t))\, {\rm d}t  . 
   \end{align} \EEE
We get $V_A + V_B =  U^{\rm pure}_A +  U^{\rm pure}_B + U^{\rm mix}$. 

 Denote by $E_{1D}(a(t),b(t))$ the minimal energy  in
one dimension \EEE 
for phases with  lengths \EEE $a(t)$ and $b(t)$,
respectively.  The minimal energy is attained if the horizontal slices  $A_t$, $B_t$ and $(A \cup B)_t$  are intervals (or empty), and in this case we have \EEE  
   \begin{equation}\label{eq: 1DDD}
  E_{1D}(a(t),b(t)) =
  \left\{
    \begin{array}{ll}
      2&\quad \text{on} \  \mathcal{T}^{\rm pure}_A \cup  \mathcal{T}^{\rm pure}_B,\\
           2+\eta&\quad \text{on} \  \mathcal{T}^{\rm mix}.
    \end{array}
  \right.\EEE
\end{equation} 

By a slicing argument  we obtain the following  first \EEE lower bound  on \EEE the energy of a configuration.

\begin{lemma}\label{lem:symmetrization-new}
Suppose that $A$ and $B$ are  bounded sets \EEE of finite perimeter. Then,
\begin{equation}\label{eq: part one slicing}
E(A,B)\geq  \int_\R E_{1D}(a(t),b(t)) \, {\rm d}t + m (2+   \eta   p)
\end{equation}
and
\begin{equation}\label{eq: part two slicing}
\int_{\partial^* (A  \cup B)}   |\nu \cdot e_2|\, {\rm  d} \EEE \mathcal{H}^{1} +   \eta \int_{\partial^* A  \cap \partial^* B}   |\nu \cdot e_2|\, {\rm  d} \EEE\mathcal{H}^{1} \geq m (2+  \eta  p ),
\end{equation}
where $\nu$ coincides with $\nu_A$ on $\partial^* A$ and with  $\nu_B$ on $\partial^* B \setminus \partial^* A$.
\end{lemma}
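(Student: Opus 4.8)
\textbf{Proof plan for Lemma \ref{lem:symmetrization-new}.}

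The plan is to derive both inequalities by decomposing the $\ell_1$-perimeters into their horizontal ($|\nu\cdot e_1|$) and vertical ($|\nu\cdot e_2|$) contributions, and handling each part separately. First I would write
$$
E(A,B) = \int_{\partial^*A}|\nu\cdot e_1|\,{\rm d}\mathcal{H}^1 + \int_{\partial^*B}|\nu\cdot e_1|\,{\rm d}\mathcal{H}^1 + (\eta-2)\int_{\partial^*A\cap\partial^*B}|\nu\cdot e_1|\,{\rm d}\mathcal{H}^1 + \big[\text{same with }e_2\big].
$$
For the $e_1$-part, the coarea/slicing formula for sets of finite perimeter (cf.\ \cite[Section 18.3]{Maggi}) identifies $\int_{\partial^*A}|\nu\cdot e_1|\,{\rm d}\mathcal{H}^1$ with $\int_\R P(A_t)\,{\rm d}t$, where $P(A_t)$ is the (one-dimensional) perimeter of the horizontal slice, and similarly the interface contributes via the slices of $\partial^*A\cap\partial^*B$. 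Rearranging, the $e_1$-part of $E(A,B)$ is bounded below by $\int_\R E_{1D}(a(t),b(t))\,{\rm d}t$, since for a.e.\ $t$ the slice configuration $(A_t,B_t)$ is admissible in the one-dimensional problem with lengths $a(t),b(t)$, and $E_{1D}$ is by definition the minimum over all such configurations; equality requires the slices to be intervals as recorded in \eqref{eq: 1DDD}. This gives the first summand on the right-hand side of \eqref{eq: part one slicing}.

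The second inequality \eqref{eq: part two slicing} is the heart of the matter: I need to bound the $e_2$-part, i.e.
$$
\int_{\partial^*(A\cup B)}|\nu\cdot e_2|\,{\rm d}\mathcal{H}^1 + \eta\int_{\partial^*A\cap\partial^*B}|\nu\cdot e_2|\,{\rm d}\mathcal{H}^1 \ \geq\ m(2+\eta p),
$$
having first used that $\int_{\partial^*A}|\nu\cdot e_2| + \int_{\partial^*B}|\nu\cdot e_2| - 2\int_{\partial^*A\cap\partial^*B}|\nu\cdot e_2| = \int_{\partial^*(A\cup B)}|\nu\cdot e_2|$ (the interior interface, traversed with opposite normals on the two sides, cancels in the vertical perimeter of the union, while the remaining interface terms combine to give the stated expression with coefficient $\eta$). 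To estimate the right-hand side, I would slice $A\cup B$ and $A\cap B$-type interface sets in the \emph{vertical} direction instead: for each vertical line $x_1=s$ at which $A\cup B$ has nonempty slice, the vertical perimeter $\int|\nu\cdot e_2|$ over that line is at least $2$ (a nonempty one-dimensional finite-perimeter set has perimeter $\geq 2$), and this happens for $x_1\in$ a set of measure $m$ by definition of $m$ as the length of the horizontal projection of $A\cup B$. This yields the ``$2m$'' term. For the extra ``$\eta p\, m$'' term, I observe that the projections of $A$ and $B$ overlap on a set of measure $m_A+m_B-m = pm$ (after the normalization \eqref{p not}), wait — more precisely $m_A+m_B-m\le pm$ need not hold as equality unless projections are measured correctly, but $p m = m_A+m_B-m$ by \eqref{p not}; on the set where the horizontal projections of $A$ and $B$ both meet a vertical line, the line must cross from (a point of) $A$ to (a point of) $B$, which forces either an additional pair of vertical-normal boundary crossings of $A\cup B$ or a vertical-normal crossing of the interface $\partial^*A\cap\partial^*B$, each weighted appropriately; summing the $\eta$-weighted interface contribution plus the surplus union-boundary contribution over this overlap set of measure $pm$ gives the remaining $\eta p m$.

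The main obstacle is the bookkeeping in this last vertical-slicing estimate: one must carefully track, line by line (for a.e.\ $s$), how the slice of $A$, the slice of $B$, and the slice of the interface relate, and verify that the combination $\int_{(A\cup B)\text{-slice}}|\nu\cdot e_2| + \eta\int_{\text{interface slice}}|\nu\cdot e_2|$ integrates to at least $2m + \eta\cdot(m_A+m_B-m) = m(2+\eta p)$. The clean way is to prove the pointwise (in $s$) bound that this vertical-slice integrand is $\geq 2$ on the projection of $A\cup B$ and $\geq 2+\eta$ on the set where both $A$ and $B$ project (there the slice of $A\cup B$ separates an $A$-part from a $B$-part, so it has a boundary point of multiplicity-$1$ type adjacent to each phase, forcing perimeter $\geq 2$, plus an interface boundary point contributing the extra $\eta$), then integrate in $s$ and use $|\{s : a\text{ and }b\text{ both project}\}| = m_A + m_B - m = pm$ together with $|\{s: A\cup B\text{ projects}\}| = m$. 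Once the pointwise bound is in hand, \eqref{eq: part two slicing} follows by integration, and combining it with the $e_1$-part estimate yields \eqref{eq: part one slicing}.
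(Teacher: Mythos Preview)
Your proposal is correct and follows essentially the same approach as the paper: decompose $E(A,B)$ into its $e_1$- and $e_2$-contributions, bound the $e_1$-part by $\int_\R E_{1D}(a(t),b(t))\,{\rm d}t$ via horizontal slicing, rewrite the $e_2$-part as $\int_{\partial^*(A\cup B)}|\nu\cdot e_2| + \eta\int_{\partial^*A\cap\partial^*B}|\nu\cdot e_2|$, and bound this via vertical slicing using the dichotomy that on a vertical line meeting both phases one either finds an interface point or at least four boundary points of the union.

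The only substantive difference is bookkeeping: the paper introduces $q = \frac{1}{m}\mathcal{L}^1\{s:\mathcal{H}^0(\partial A^s\cap\partial B^s)\ge 1\}\le p$, derives $\int \mathcal{H}^0(\partial(A\cup B)^s)\ge 2m+2m(p-q)$ and $\int \mathcal{H}^0(\partial A^s\cap\partial B^s)\ge mq$, and then minimizes $2(p-q)+\eta q$ over $q\in[0,p]$ to get $\eta p$ (using $\eta<2$). Your direct pointwise bound $\ge 2+\eta$ on the overlap set is equivalent, since $\min(4,\,2+\eta)=2+\eta$; but note that your parenthetical justification in the final paragraph (``\ldots plus an interface boundary point contributing the extra $\eta$'') tacitly assumes the interface point exists, whereas you correctly stated the either/or alternative one paragraph earlier---make sure the written-up version keeps that alternative explicit.
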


 The proof is standard and follows by slicing of sets of finite perimeter. We include a proof for convenience of the reader.

\begin{proof}
Let $E$ be a set of finite perimeter. By slicing properties of $BV$-functions, we obtain
\begin{equation}\label{eq:estimateofg2}
\int_{\partial^* E} |\nu_E \cdot e_1 | \, {\rm d} \mathcal{H}^{1} \EEE = \int_{\R}  \mathcal{H}^0\big(  \partial E_t \big) \, {\rm d}\mathcal{H}^{1}(t), 
\end{equation}
 where $\mathcal{H}^0$ is the counting measure. \EEE
Applying this projection argument on the sets $A$ and $B$ we find
\begin{align}\label{P1}
\int_{\partial^* (A\cup B)} |\nu \cdot e_1 | \, {\rm d} \mathcal{H}^{1} \EEE = \int_{\R}  \mathcal{H}^0\big(  \partial (A \cup B)_t \big) \, {\rm d}\mathcal{H}^{1}(t)
\end{align}
and 
\begin{align}\label{P2}\int_{\partial^* A \cap \partial^* B} |\nu \cdot e_1 | \, {\rm d} \mathcal{H}^{1} \EEE = \int_{\R}  \mathcal{H}^0\big(  \partial A_t \cap \partial B_t \big) \, {\rm d}\mathcal{H}^{1}(t)
\end{align}  
where $\nu$ coincides with $\nu_A$ on $\partial^* A$ and with  $\nu_B$ on $\partial^* B \setminus \partial^* A$.  In a similar fashion, we obtain 
\begin{align}\label{P3}
\int_{\partial^* (A\cup B)} |\nu \cdot e_2 | \, {\rm d} \mathcal{H}^{1} \EEE = \int_{\R}  \mathcal{H}^0\big(  \partial (A \cup B)^t \big) \, {\rm d}\mathcal{H}^{1}(t)
\end{align}
and 
\begin{align}\label{P4}
\int_{\partial^* A \cap \partial^* B} |\nu \cdot e_2 | \, {\rm d} \mathcal{H}^{1} \EEE = \int_{\R}  \mathcal{H}^0\big(  \partial A^t \cap \partial B^t \big) \, {\rm d}\mathcal{H}^{1}(t)
\end{align}  
where $E^t := E \cap \{ (t,x_2): x_2 \in \mathbb{R} \}$ for each bounded set of finite perimeter $E\subset \R^2$.

For each $t$ such that $\mathcal{L}^1((A \cup B)^t)>0$  it holds that
 $\mathcal{H}^0\big(  \partial (A \cup B)^t \big)  \EEE \ge
2$. \EEE Moreover, whenever $\mathcal{L}^1(A^t)>0$ and  $\mathcal{L}^1(B^t)>0$ we have
$$\mathcal{H}^0\big(  \partial A^t \cap \partial B^t) \ge 1 \quad
\quad \text{or} \quad \quad   \mathcal{H}^0\big(  \partial (A \cup B)^t \big)   \ge 4,$$
depending on whether the intersection of $\partial^*A$ and $\partial^*B$ on the slice is empty or not. Recalling the definition of $m$ and $p$, and setting $q := \frac{1}{m} \lbrace t\in \R \colon \mathcal{H}^0(\partial A^t \cap \partial B^t) \ge 1 \rbrace $, we have $q \le p$ and obtain  
\begin{align}\label{P5}
 \int_{\R}  \mathcal{H}^0\big(  \partial (A \cup B)^t \big) \, {\rm d}\mathcal{H}^{1}(t) \ge 2m + 2m(p-q), \quad \quad \quad \int_{\R}  \mathcal{H}^0\big( \partial A^t \cap \partial B^t \big) \, {\rm d}\mathcal{H}^{1}(t) \ge mq.
 \end{align}
By \eqref{P3}--\eqref{P4} this \EEE shows  \eqref{eq: part two slicing}, where we particularly use that $\min_{0\le q \le p} (2(p-q) + \eta q) = \eta p $ due to  $\eta <2$.  In a similar fashion, using $\partial^* A \cup \partial^* B = \partial^*(A \cup B) \cup (\partial^* A \cap \partial^*B)$ and  rewriting the energy defined  in \eqref{eq:0energy} equivalently as  $E(A,B) =  \ell_1(\partial^* (A\cup B)  )  + \eta \ell_1(\partial^* A \cap \partial^*B)$, the combination of \EEE \eqref{P1}--\eqref{P2} and  \eqref{P5} yields \EEE \eqref{eq: part one slicing}. 
\end{proof}

We introduce some more notation related to a different decomposition of slices. We set 
\begin{align}\label{eq: calT}
\hspace{-0.6cm} \mathcal{T}_0 = \lbrace t \in {\mathbb R} \EEE\colon r a(t) = b(t)>0\rbrace, \ \ \mathcal{T}_A = \lbrace t \in {\mathbb R} \EEE\colon r a(t) > b(t) \rbrace, \ \ \mathcal{T}_B = \lbrace t \in {\mathbb R} \EEE\colon r a(t) < b(t) \rbrace
\end{align}
and define, for convenience, 
\begin{align}\label{eq: UAB}
U_A = \int_{\mathcal{T}_A} (a(t) + b(t)) \, {\rm d}t, \quad U_B = \int_{\mathcal{T}_B} (a(t) + b(t)) \, {\rm d}t, \quad U_0 = \int_{\mathcal{T}_0} (a(t) + b(t)) \, {\rm d}t. 
\end{align}
Clearly, $V_A + V_B = U_A + U_B + U_0$.   We set
\begin{align}\label{eq: alohabeta}
\alpha(t):=\frac{b(t)}{a(t)} \in [0, r) \ \ \text{for} \ t \in \mathcal{T}_A \
\  \ \text{and} \  \ \ \beta(t):=\frac{a(t)}{b(t)}\in [0, 1/r) \ \ \text{for} \ t \in \mathcal{T}_B.
\end{align}
Since  $r\mathcal{L}^2(A) = rV_A = V_B =  \mathcal{L}^2(B)$, \EEE we have by the definition of $\mathcal{T}_0$
$$r \int_{\mathcal{T}_A\cup \mathcal{T}_B} a(t) \,   {\rm d}t =   \int_{\mathcal{T}_A\cup \mathcal{T}_B} b(t) \,   {\rm d}t $$
and therefore
\begin{align}\nonumber
\int_{\mathcal{T}_A} a(t) (r-\alpha(t)) \,{\rm d}t &=
                                                     \int_{\mathcal{T}_A}
                                                     (r a(t) - b(t))
                                                     \, {\rm d}t    =
                                                     \int_{\mathcal{T}_B}
                                                     (b(t) - r a(t))
                                                     \, {\rm d}t  \\
&=   \int_{\mathcal{T}_B} b(t) (1-r \beta(t)) \, {\rm d}t. \label{eq: V*} 
\end{align}

\subsection{Type I} 
In this subsection we prove Proposition \ref{prop: Type1}. We suppose that $r \ge  \max\lbrace r_{12}(\eta), r_{13}(\eta)\rbrace$, where  $r = V_B/V_A$. Recall \eqref{eq: T'}  and define
\begin{align}\label{p''''}
p' := \frac{ \mathcal{L}^1(\mathcal{T}^{\rm mix})}{\mathcal{L}^1(\mathcal{T}^{\rm pure}_A \cup  \mathcal{T}^{\rm pure}_B \cup \mathcal{T}^{\rm mix} )}, 
\end{align}
which is the analogous quantity  to \EEE  $p$ in \eqref{p not} for
the other coordinate direction.   Proposition \ref{prop: Type1}
\EEE follows from the following two lemmas,  which are then proved
in Section \ref{sec:proof_lemmas}. \EEE

\begin{lemma}\label{lem:psmaller13}
Let $r \ge \max\lbrace r_{12}(\eta), r_{13}(\eta)\rbrace$ and $(A,B)$
 be \EEE  a configuration.  For \EEE $p +p' >
2\frac{\sqrt{4+2\eta} -2 }{\eta}$  one has \EEE  $E(A,B) >E_{\rm I}(r,\eta)$.  In particular, if $E(A,B) \le E_{\rm I}(r,\eta)$,  it holds $\min \lbrace p,p' \rbrace \leq    1/2$.
\end{lemma}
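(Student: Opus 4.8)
The plan is to combine the two lower bounds from Lemma \ref{lem:symmetrization-new} applied in the two coordinate directions with the area constraint $\mathcal L^2(A)+\mathcal L^2(B)=(1+r)V_A$, in order to control the energy from below by a function of $p$ and $p'$ alone. First I would apply inequality \eqref{eq: part one slicing} in the vertical direction: since $E_{1D}(a(t),b(t))\ge 2$ on all slices where $a(t)+b(t)>0$, and the set of such $t$ has measure $m(1+p)$ up to the identification $m_A+m_B=m(1+p)$ (this is precisely \eqref{p not}), we get a term of the form $2m(1+p)+\eta p\,m$ plus the contribution $2m+\eta p m$ from the boundary in direction $e_2$. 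Doing the same in the horizontal direction with the analogous quantities (the length of the projection onto the vertical axis, call it $n$, and $p'$ from \eqref{p''''}) gives a second such bound. Averaging or adding the two bounds and using the isoperimetric-type estimate $mn\ge$ (something like) $(1+r)V_A$ — more precisely, using that the projections in the two directions together control the area via $V_A+V_B\le$ (length in $x_1$)$\times$(length in $x_2$) after accounting for the pure/mixed decomposition — yields a lower bound for $E(A,B)$ in terms of $m$, $n$, $p$, $p'$.

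Next I would optimize this lower bound over the free geometric parameters. The factor involving $p$ and $p'$ will enter as something like $\sqrt{(2+\eta p)(2+\eta p')}$ after the AM–GM step $2\sqrt{mn\,(2+\eta p)(2+\eta p')}\le$ (the sum of the two directional bounds), and minimizing the remaining shape factor against the area constraint reproduces the Type I energy $E_{\rm I}(r,\eta)=2\sqrt{4+2\eta}\sqrt{1+r}\sqrt{V_A}$ exactly when $p=p'=0$. The comparison $E(A,B)>E_{\rm I}(r,\eta)$ will then reduce to the scalar inequality
\[
\sqrt{(2+\eta p)(2+\eta p')}>\sqrt{4+2\eta}
\quad\text{whenever}\quad
p+p'>2\,\frac{\sqrt{4+2\eta}-2}{\eta},
\]
which by AM–GM on $2+\eta p$ and $2+\eta p'$ follows from $2+\eta\cdot\frac{p+p'}{2}>\sqrt{4+2\eta}$, i.e. exactly the hypothesis on $p+p'$. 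The final sentence of the lemma is then immediate: if $E(A,B)\le E_{\rm I}(r,\eta)$ we must have $p+p'\le 2\frac{\sqrt{4+2\eta}-2}{\eta}$, and since $2\frac{\sqrt{4+2\eta}-2}{\eta}<1$ for $\eta\in(0,2)$ (check: $\sqrt{4+2\eta}<2+\eta/2$, equivalently $4+2\eta<4+2\eta+\eta^2/4$, which always holds), we get $\min\{p,p'\}\le\frac12(p+p')\le\frac12$.

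The main obstacle I anticipate is bookkeeping the area constraint correctly: the quantity $m(1+p)$ is the measure of the set of vertical slices that are nonempty, but turning the two directional "number of nonempty slices" estimates into a genuine lower bound on $mn$ (or on $V_A+V_B$) requires care, because a nonempty slice can be short. The clean way around this is to not bound $mn$ directly but instead to keep the one-dimensional lengths $a(t),b(t)$ in play: use \eqref{eq: part one slicing} in direction $e_2$, which already contains $\int_\R E_{1D}(a(t),b(t))\,{\rm d}t\ge 2\,\mathcal L^1(\{a+b>0\})$, combine with the dual estimate in direction $e_1$, and then invoke a two-dimensional isoperimetric inequality for the $\ell_1$-perimeter of $A\cup B$ (whose $\ell_1$-isoperimetric constant for squares is what produces the factor $4$ and hence $\sqrt{4+2\eta}$). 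Making this last step rigorous — identifying which slicing term plays the role of which side of an $\ell_1$-isoperimetric inequality, and verifying the constant is sharp with equality iff $A\cup B$ is a vertical rectangle and all slices are intervals of matched endpoints — is the part that needs the most attention; everything else is AM–GM and the elementary estimate $2\frac{\sqrt{4+2\eta}-2}{\eta}<1$.
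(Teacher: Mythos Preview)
Your plan follows the paper's strategy: split the energy into its $e_1$- and $e_2$-components via \eqref{eq: part two slicing} applied in each coordinate direction, obtaining $E(A,B)\ge m(2+\eta p)+n(2+\eta p')$ (the paper writes this as $m_1(2+\eta p_1)+m_2(2+\eta p_2)$), then couple the two sides through the area constraint and optimize. The area constraint you flag as the main obstacle is actually immediate: $A\cup B$ is contained in the Cartesian product of its two coordinate projections, so $V_A+V_B\le mn$ with no further work. Your derivation of the final sentence ($\min\{p,p'\}\le\tfrac12$ from $p+p'\le 2\bar p$ together with $2\bar p<1$) is also fine.

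The genuine gap is your scalar step. You assert that
\[
\sqrt{(2+\eta p)(2+\eta p')}>\sqrt{4+2\eta}
\]
follows ``by AM--GM'' from $2+\eta\,\tfrac{p+p'}{2}>\sqrt{4+2\eta}$. But AM--GM runs the other way: it gives $\sqrt{(2+\eta p)(2+\eta p')}\le 2+\eta\,\tfrac{p+p'}{2}$, an \emph{upper} bound on the geometric mean, not the lower bound you need. Worse, the inequality you want is simply false under the hypothesis $p+p'>2\bar p$ alone (with $\bar p=(\sqrt{4+2\eta}-2)/\eta$): take $p=0$ and $p'=2\bar p+\varepsilon$ for small $\varepsilon>0$. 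Then
\[
(2+\eta p)(2+\eta p')=2\bigl(2+2\eta\bar p+\eta\varepsilon\bigr)=4\sqrt{4+2\eta}-4+2\eta\varepsilon,
\]
and $4\sqrt{4+2\eta}-4<4+2\eta$ for every $\eta\in(0,2)$ (this rearranges to $\eta^2>0$). So for small $\varepsilon$ the product sits strictly below $4+2\eta$, and the chain $E\ge 2\sqrt{mn\,(2+\eta p)(2+\eta p')}\ge 2\sqrt{(V_A+V_B)(2+\eta p)(2+\eta p')}$ fails to force $E>E_{\rm I}$. Freezing the product $(2+\eta p)(2+\eta p')$ too early throws away precisely the information you need; the paper instead uses the strict hypothesis to substitute $p'\mapsto 2\bar p-p$ \emph{before} optimizing jointly in the length variable and in $p$, and you would need to follow that order of operations rather than pass to the geometric mean.
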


\begin{lemma}\label{lemma1}
Let $r >r_{12}(\eta)$ and $r \ge r_{13}(\eta)$,  and let \EEE $(A,B)$ be a
\EEE configuration.  Suppose \EEE that we can choose a coordinate direction with $p \le 1/2$.  If $r = r_{13}(\eta)$, $m_A = m$, and  $\mathcal{L}^1(\mathcal{T}^{\rm pure}_B)=0$,  we additionally assume that $p'  >  \frac{2-\eta}{2 }\EEE$.    Then,  we have  
\begin{align}\label{eq: main estimate}
E(A,B) \geq (2+\eta p)m + \frac{2+\eta}{(1 + \eta\frac{p}{2})m} (V_A + V_B),
\end{align}
 and equality is attained if  and only if  $p = 0$, \EEE 
$\mathcal{L}^1(\mathcal{T}_A) = \mathcal{L}^1(\mathcal{T}_B) = 0$ and
$a(t) + b(t) = m$ for  $t \in \mathcal{T}_0\setminus \mathcal{N}$,
where  $\mathcal{L}^1(\mathcal{N})=0$. \EEE 
\end{lemma}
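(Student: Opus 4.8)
The plan is to prove the lower bound \eqref{eq: main estimate} by combining the one-dimensional slicing bound \eqref{eq: part one slicing} from Lemma \ref{lem:symmetrization-new} with an isoperimetric-type estimate on the ``horizontal'' contribution. Specifically, starting from $E(A,B) \geq \int_\R E_{1D}(a(t),b(t))\,{\rm d}t + m(2+\eta p)$, I would use \eqref{eq: 1DDD} to write $\int_\R E_{1D}(a(t),b(t))\,{\rm d}t = 2\mathcal{L}^1(\mathcal{T}^{\rm pure}_A \cup \mathcal{T}^{\rm pure}_B) + (2+\eta)\mathcal{L}^1(\mathcal{T}^{\rm mix})$. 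The key point is that each slice has $a(t)+b(t) \leq m$ (pure slices) while on $\mathcal{T}^{\rm mix}$ we again have $a(t)+b(t)\le m$ but the slice is ``charged'' $2+\eta$ instead of $2$; the constraint $V_A+V_B = U^{\rm pure}_A + U^{\rm pure}_B + U^{\rm mix}$ together with the upper bound $m$ on each slice length gives a lower bound on the total measure of slices, namely $\mathcal{L}^1(\mathcal{T}^{\rm pure}_A \cup \mathcal{T}^{\rm pure}_B \cup \mathcal{T}^{\rm mix}) \geq (V_A+V_B)/m$. Recalling that $p' = \mathcal{L}^1(\mathcal{T}^{\rm mix})/\mathcal{L}^1(\mathcal{T}^{\rm pure}_A \cup \mathcal{T}^{\rm pure}_B \cup \mathcal{T}^{\rm mix})$, the horizontal contribution is at least $(2+\eta p')\cdot (V_A+V_B)/m$, so
\begin{align*}
E(A,B) \geq (2+\eta p)m + (2+\eta p')\,\frac{V_A+V_B}{m}.
\end{align*}

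From here the strategy is to optimize. First, minimizing the right-hand side over $m>0$ for fixed $p,p'$ gives $2\sqrt{(2+\eta p)(2+\eta p')}\sqrt{V_A+V_B}$, but I want the sharper slice-adapted form in \eqref{eq: main estimate}, so instead I would argue directly: the claimed bound $(2+\eta p)m + \frac{2+\eta}{(1+\eta p/2)m}(V_A+V_B)$ suggests one should show $p' \geq \text{(something forcing the product structure)}$, or more precisely that the function $m \mapsto (2+\eta p)m + (2+\eta p')(V_A+V_B)/m$ dominates the target. The cleanest route: show that under the hypotheses $2+\eta p' \geq \frac{2+\eta}{1+\eta p/2}$, equivalently $(2+\eta p')(1+\eta p/2) \geq 2+\eta$, i.e. $2\eta p' + \eta p + \eta^2 p p'/2 \geq \eta$, i.e. $2p' + p + \eta pp'/2 \geq 1$. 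This is where the hypothesis ``$p \leq 1/2$'' enters together with a lower bound on $p'$: indeed, if $p$ is small then one needs $p'$ bounded below, and I expect that the condition $p \le 1/2$ combined with the volume/perimeter constraints (and the extra assumption $p' > (2-\eta)/2$ in the degenerate case) is exactly what delivers $2p'+p+\eta pp'/2 \geq 1$. The role of $r \geq r_{13}(\eta)$, $r > r_{12}(\eta)$ should be to guarantee that this inequality, once it fails to be strict, forces the Type I geometry rather than Type II/III.

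I would then handle the equality case. Equality in \eqref{eq: part one slicing} forces every slice $A_t$, $B_t$, $(A\cup B)_t$ to be an interval (or empty), and forces $q=p$ in the proof of Lemma \ref{lem:symmetrization-new}, i.e. the vertical boundaries of $A$ and $B$ meet. Equality in the slice-length estimate $a(t)+b(t)\leq m$ a.e.\ on the support forces $a(t)+b(t)=m$ for a.e.\ $t$. Equality in $2p'+p+\eta pp'/2 \geq 1$ combined with $p\le 1/2$ should pin down $p=0$ (so $m_A=m_B=m$, i.e.\ both phases are columns spanning the full horizontal extent) and then $p'=1/2$, which together with the slicing rigidity forces $\mathcal{L}^1(\mathcal{T}_A)=\mathcal{L}^1(\mathcal{T}_B)=0$ and the stated structure on $\mathcal{T}_0$. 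The decomposition $\mathcal{T}_0, \mathcal{T}_A, \mathcal{T}_B$ from \eqref{eq: calT}--\eqref{eq: V*} is the right bookkeeping device here: \eqref{eq: V*} shows that the ``defect'' integrals over $\mathcal{T}_A$ and $\mathcal{T}_B$ balance, and I would show that any such defect strictly increases the energy, forcing both to vanish.

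The main obstacle I anticipate is the verification of the inequality $2p' + p + \eta pp'/2 \geq 1$ under the stated hypotheses, and in particular isolating precisely why the degenerate configuration ($r=r_{13}(\eta)$, $m_A = m$, $\mathcal{L}^1(\mathcal{T}^{\rm pure}_B)=0$) needs the supplementary assumption $p' > (2-\eta)/2$: in that borderline situation the generic argument only yields a non-strict inequality and one must rule out a competing (Type III-like) configuration by hand. A secondary technical point is the passage from ``a.e.\ slice is an interval with $a(t)+b(t)=m$ and touching boundaries'' to the actual rectangular Type I geometry up to isometry; this requires a short connectedness/monotonicity argument on the endpoints of the slices as functions of $t$, which is routine but must be done carefully for finite-perimeter (not merely Lipschitz) sets.
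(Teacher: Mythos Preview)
Your strategy has a genuine gap: the inequality you need, namely $(2+\eta p')(1+\eta p/2) \geq 2+\eta$, i.e.\ $p+p'+\eta pp'/2 \geq 1$ (note: your algebra gave $2p'+p+\ldots$, but it should be $p+p'+\ldots$), is simply \emph{false} under the hypotheses of the lemma. Take $V_A=V_B=1$ (so $r=1$, which satisfies $r>r_{12}(\eta)$ and $r\geq r_{13}(\eta)$ for every $\eta\in(0,2)$) and let $A=[0,1]^2$, $B=[2,3]^2$. Then $m_A=m_B=1$, $m=2$, $p=0$, and every horizontal slice is pure, so $p'=0$. Your intermediate bound gives only $E(A,B)\ge 2m + 2(V_A+V_B)/m = 6$, whereas the lemma's target is $(2+\eta)\cdot 2/m\cdot(V_A+V_B) + 2m = 6+\eta$. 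The lemma is still true here ($E=8\geq 6+\eta$), but your route cannot reach it. The hypothesis $p\le 1/2$ gives no lower bound on $p'$, and the supplementary assumption $p'>(2-\eta)/2$ is only imposed in one very specific borderline case, not generically.

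The paper's proof is structurally different. It never tries to bound $p'$ from below. Instead it splits into two cases according to whether $\max\{m_A,m_B\} < \frac{2+\eta p}{2+\eta}m$. In the first case it uses the \emph{sharper} slice bounds $a(t)\le m_A$ and $b(t)\le m_B$ on pure slices (not merely $a(t)+b(t)\le m$), which is exactly what rescues the two-squares example above, since there $m_A=m_B=1 < \frac{4}{2+\eta}$. In the second case it uses the decomposition $\mathcal{T}_0,\mathcal{T}_A,\mathcal{T}_B$ based on the ratio $b(t)/a(t)$ versus $r$, together with the auxiliary functions $g_A^r,g_B^r$ of \eqref{eq:gagb}--\eqref{eq:gagb2} and three technical Lemmas~\ref{lem:appendix}--\ref{lemma: letztes lemma}; the assumptions $r>r_{12}(\eta)$, $r\ge r_{13}(\eta)$ enter precisely in those lemmas (to control the signs and minima of $g_A^r,g_B^r$), not in any bound on $p'$. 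Your proposal does correctly identify \eqref{eq: part one slicing} as the starting point and the balance identity \eqref{eq: V*} as relevant bookkeeping, but the mechanism that converts the slicing bound into \eqref{eq: main estimate} is the $m_A,m_B$ versus $\frac{2+\eta p}{2+\eta}m$ dichotomy, not a $p'$-inequality.
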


\begin{proof}[Proof of Proposition \ref{prop: Type1}]
Without restriction we can  assume that there is a coordinate direction with $p \le 1/2$ as otherwise by Lemma \ref{lem:psmaller13}  the estimate \eqref{eq: energy-type1} is strict. Let us further check that, if $r = r_{13}(\eta)$, $m_A = m$, and  $\mathcal{L}^1(\mathcal{T}^{\rm pure}_B)=0$,  we additionally have $p' >  \frac{2-\eta}{2 } \EEE$. Indeed, by $m=m_A$ we have $pm=m_B$, see \eqref{p not}. Then,  \eqref{eq: length-B2,5} implies that
$${\rm (i)}  \ \  p= \frac{m_B}{m} > \frac{2-\eta}{2 } \EEE \quad \quad  \text{ or } \quad \quad p'  >   \frac{2-\eta}{2 }.  $$
Since $r = r_{13}(\eta)$ and $r \ge r_{12}(\eta)$, we get $\eta \in
(0,\eta_{\rm triple}]$, see Figure \ref{diagram}, and thus
$\frac{2-\eta}{2} >\frac{1}{2}$. As $p \le 1/2$,  case (i) cannot
occur and case (ii) follows. \EEE This allows to apply Lemma \ref{lemma1}.
 
By the change of variables $M = (2+\eta p)m/(2+\eta)$, and
optimizing with respect to all possible values $m>0$ and $p \in
[0,1/2]$, Lemma \ref{lemma1} implies that  $E(A,B) \geq 2\sqrt{4+2\eta}\sqrt{V_A+V_B} = E_{\rm I}(r,\eta)\EEE$ with equality  if and
\EEE only if $p=0$, $\mathcal{L}^1(\mathcal{T}_A) =
\mathcal{L}^1(\mathcal{T}_B) = 0$, and $a(t) + b(t) = m$ for $t \in
\mathcal{T}_0 \setminus \mathcal{N}$. \EEE This allows to characterize
ground states. In fact, $p=0$ and $a(t) + b(t) = m$   yield that the projection of $A$ and $B$ have empty intersection and
each slice $t \in \mathcal{T}_0 \setminus \mathcal{N}$ has the same
geometry. This shows that minimizers are of {\rm Type I},  see \EEE \eqref{eq:case1}. 
\end{proof}

\subsection{Type II and Type III} 
In this subsection,   we prove Proposition \ref{prop: Type2,3} by
treating   configurations of \EEE Type II and Type III at once. We suppose that $r \ge 
 \bar r( \EEE\eta) := \EEE \max\lbrace r_{12}(\eta), r_{13}(\eta)\rbrace$, where  $r = V_B/V_A$. As before, we choose a coordinate direction and denote the length of the
projections by $m_A$,  $m_B$, and $m$, respectively. Recall also  the notations in \eqref{p not} and  \eqref{eq: T'}. The key ingredient of the proof  are the following two lemmas.

\begin{lemma}[Big projection]\label{lemma: biggi} The following two properties hold:\\
(Big projection $m_B$) Under the condition
  \begin{align}\label{eq: B1}
    m_B >   \frac{2-\eta}{2\eta}\big(   (2+\eta)m_A - 2m   \big),
  \end{align}
  we have the lower bound
\begin{align}\label{eq: lower3}
\hspace{-0.7cm} E(A,B)  \ge  2m_A + \eta\big(   m_B + (2\eta^{-1}-1)(m-m_A)  \big)  +  2\frac{V_A}{m_A} +  2\frac{V_B}{ m_B + (2\eta^{-1}-1)(m-m_A) }.
 \end{align}
   Equality can only hold if $a(t) = m_A$ for a.e.\ $t \in \mathcal{T}_A^{\rm pure} $ and $b(t) = m_B$ for a.e.\ $t \in \mathcal{T}^{\rm pure}_B $. Additionally, if $p>0$, equality can only hold if $\mathcal{L}^1(\mathcal{T}^{\rm mix} )= 0$.  \\
(Big projection $m_A$) Under the condition
  \begin{align}\label{eq: A1}
    m_A >   \frac{2-\eta}{2\eta}\big(   (2+\eta)m_B - 2m   \big),
  \end{align}
  we have the lower bound
  $$ E(A,B)  \ge  2m_B + \eta\big(   m_A + (2\eta^{-1}-1)(m-m_B)  \big)  +  2\frac{V_B}{m_B} +  2\frac{V_A}{ m_A + (2\eta^{-1}-1)(m-m_B) }.$$  
  \end{lemma}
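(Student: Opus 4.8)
The plan is to prove the ``Big projection $m_B$'' statement and then deduce the ``Big projection $m_A$'' statement by the symmetry $A\leftrightarrow B$, $V_A\leftrightarrow V_B$. For the first part, I would start from Lemma~\ref{lem:symmetrization-new}, which gives
$$E(A,B)\ge \int_\R E_{1D}(a(t),b(t))\,{\rm d}t + m(2+\eta p).$$
The integral over $\R$ of $E_{1D}$ equals $2\,\mathcal L^1(\mathcal T^{\rm pure}_A\cup\mathcal T^{\rm pure}_B) + (2+\eta)\,\mathcal L^1(\mathcal T^{\rm mix})$ by \eqref{eq: 1DDD}, while $mp = m_A+m_B-m = \mathcal L^1(\mathcal T^{\rm pure}_A) + \mathcal L^1(\mathcal T^{\rm mix}) + \mathcal L^1(\mathcal T^{\rm pure}_B) + \mathcal L^1(\mathcal T^{\rm mix}) - m$ after noting $m_A = \mathcal L^1(\mathcal T^{\rm pure}_A \cup \mathcal T^{\rm mix})$ and $m_B = \mathcal L^1(\mathcal T^{\rm pure}_B\cup\mathcal T^{\rm mix})$ and $m = \mathcal L^1(\mathcal T^{\rm pure}_A\cup\mathcal T^{\rm pure}_B\cup\mathcal T^{\rm mix})$. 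So the two ``horizontal'' contributions (the $E_{1D}$ integral plus $m\eta p$) can be rewritten purely in terms of the three slice-set lengths; the key point is that the coefficient in front of $\mathcal L^1(\mathcal T^{\rm mix})$ becomes $2\eta$, reflecting that a mixed slice both costs an extra $\eta$ in $E_{1D}$ and contributes to $p$.

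Next I would handle the two ``vertical'' area contributions. The term $2\frac{V_A}{m_A}$ arises from the isoperimetric-type estimate: the part of $\partial^*A$ with horizontal normal, measured in the $x_2$-direction, is at least $2V_A/m_A$ since $A$ has $x_1$-projection of length $m_A$ and area $V_A$ (this is the one-dimensional Cauchy--Schwarz / Fubini estimate: $\int |\nu_A\cdot e_1|\ge \int_\R \mathcal H^0(\partial A^t)\,{\rm d}t \ge 2\cdot$(length of projection onto $x_2$) $\ge 2V_A/m_A$). Symmetrically, the $B$-phase contributes at least $2V_B/\tilde m_B$ where $\tilde m_B := m_B + (2\eta^{-1}-1)(m-m_A)$; the subtlety is that one does \emph{not} just use $m_B$ in the denominator but the enlarged quantity $\tilde m_B$, which should come from redistributing part of the ``$x_2$-direction perimeter budget'' on the mixed slices: on $\mathcal T^{\rm mix}$ the boundary $\partial^*A\cap\partial^*B$ carries weight $\eta$ rather than $2$, so one can ``trade'' a factor and effectively lengthen the admissible projection of $B$ by $(2\eta^{-1}-1)(m-m_A) = (2\eta^{-1}-1)\mathcal L^1(\mathcal T^{\rm mix}\cup(\mathcal T^{\rm pure}_B\setminus\ldots))$; I would make this precise by carefully splitting the vertical boundary integrals over the pure-$A$, pure-$B$, and mixed slice regions and applying the one-dimensional area estimate on each, using condition \eqref{eq: B1} to guarantee $\tilde m_B>0$ and that the allocation is feasible. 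Combining the horizontal bound, the two vertical bounds, and collecting terms should yield exactly \eqref{eq: lower3}.

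For the equality discussion: equality in the one-dimensional area estimate on the pure-$A$ slices forces $a(t)=m_A$ for a.e.\ $t\in\mathcal T^{\rm pure}_A$ (every pure-$A$ slice is a single interval spanning the full projection), and likewise $b(t)=m_B$ a.e.\ on $\mathcal T^{\rm pure}_B$; equality in Lemma~\ref{lem:symmetrization-new}'s first inequality, together with $p>0$, forces the mixed contribution to vanish, i.e.\ $\mathcal L^1(\mathcal T^{\rm mix})=0$ — I would trace through which inequalities are tight. Finally, the ``Big projection $m_A$'' inequality follows verbatim by swapping the roles of $A$ and $B$ (and of $m_A$ and $m_B$), since the energy $E$ and the slicing setup are symmetric under this exchange; condition \eqref{eq: A1} is precisely \eqref{eq: B1} with $A$ and $B$ interchanged. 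The main obstacle I anticipate is pinning down the exact bookkeeping that produces the enlarged denominator $m_B + (2\eta^{-1}-1)(m-m_A)$ rather than a naive $m_B$ — getting the coefficient $2\eta^{-1}-1$ right requires carefully accounting for how the reduced cost $\eta$ (versus $2$) on the shared boundary lets the $B$-phase ``borrow'' projection length from the mixed region, and verifying that condition \eqref{eq: B1} is exactly what makes this trade improve (rather than worsen) the bound.
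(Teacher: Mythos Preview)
Your proposal contains a fundamental confusion between horizontal and vertical projections that derails the argument. You write $m_A = \mathcal L^1(\mathcal T^{\rm pure}_A \cup \mathcal T^{\rm mix})$, but this is false: $m_A$ is the length of the projection of $A$ onto the \emph{horizontal} axis, whereas $\mathcal T^{\rm pure}_A,\mathcal T^{\rm mix}$ are subsets of the \emph{vertical} axis (they index horizontal slices), so $\mathcal L^1(\mathcal T^{\rm pure}_A \cup \mathcal T^{\rm mix})=\tilde m_A$, the projection in the other direction. Your rewriting of $mp$ in terms of slice-set lengths therefore collapses. More seriously, once you invoke Lemma~\ref{lem:symmetrization-new} you already have a lower bound for the \emph{full} energy $E(A,B)$; there are no separate ``vertical'' contributions left to add, so your plan to produce $2V_A/m_A$ from an independent isoperimetric estimate would double-count.

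The paper's mechanism is quite different from what you outline. One first observes the purely algebraic identity
\[
(2+\eta p)m \;=\; 2m_A + \eta\big(m_B + (2\eta^{-1}-1)(m-m_A)\big),
\]
which follows from $m_B=(1+p)m-m_A$; this is where the enlarged denominator $\tilde m_B := m_B + (2\eta^{-1}-1)(m-m_A)$ comes from --- it is not a ``trade'' on mixed slices but simply a rewriting of $(2+\eta p)m$. The area terms $2V_A/m_A$ and $2V_B/\tilde m_B$ then arise by bounding the $E_{1D}$ integral slice-by-slice: one checks that on $\mathcal T^{\rm pure}_A$ one has $2 \ge 2a(t)/m_A$, on $\mathcal T^{\rm pure}_B$ one has $2 \ge 2b(t)/\tilde m_B$, and on $\mathcal T^{\rm mix}$ one has $2+\eta \ge 2a(t)/m_A + 2b(t)/\tilde m_B$. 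Integrating these pointwise inequalities and using $\int a = V_A$, $\int b = V_B$ yields \eqref{eq: lower3}. The mixed-slice inequality is the only nontrivial one: after using $b\le \min\lbrace m-a,m_B\rbrace$ the right-hand side is linear in $a$, so its maximum is at $a=m-m_B$ or $a=m_A$, and condition \eqref{eq: B1} is precisely what makes the bound $\le 2+\eta$ hold at both endpoints. This pointwise check is also where the equality characterizations (and the strictness when $p>0$, via $m_A+m_B>m$) come from.
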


\begin{lemma}[Small projection]\label{lemma: smalli}
The following two properties hold:\\
(Small projection $m_B$) Under the conditions
  \begin{align}\label{eq: B2}
   m_B \le  \frac{2-\eta}{2\eta}\big(   (2+\eta)m_A - 2m   \big), \quad \quad   \mathcal{ L \EEE}^1( \mathcal{T}^{\rm pure}_B \EEE \cup 
  \mathcal{T}^{\rm mix} \EEE) \le \Big(\frac{2}{\eta} -1 \Big) \mathcal{ L
  \EEE}^1(  \mathcal{T}^{\rm
  pure}_A ),
  \end{align}
  we have the lower bound
  \begin{align}\label{eq: lower1}
  E(A,B)  \geq  2m + \eta (m_A-m) + \eta m_B +  4\frac{V_A+V_B}{2m + \eta (m_A-m)} +  \eta \frac{V_B}{m_B}.
  \end{align}
     Equality can only hold if  $a(t) + b(t) = m$ and $b(t) = m_B$ for a.e.\ $t \in  \mathcal{T}^{\rm mix} $, and $a(t) = m_A$ for a.e.\ $t \in \mathcal{T}^{\rm pure}_A$.    Additionally, if $m_A  <m $ or $m_B < (2-\eta)m/2$, equality can only hold if $\mathcal{L}^1(\mathcal{T}^{\rm pure}_B )= 0$.  \\
(Small projection $m_A$)  Under the conditions 
   \begin{align}\label{eq: A2}
   m_A \le  \frac{2-\eta}{2\eta}\big(   (2+\eta)m_B - 2m   \big), \quad \quad m_B \ge \frac{2}{2+\eta}m, \quad \quad   \eta \le \eta_{\rm triple},  \quad r \le 1/3,
  \end{align} 
 we have the lower bound 
  \begin{align}\label{eq: lower2}
  E(A,B)  \geq  2m + \eta (m_B-m) + \eta m_A +  4\frac{V_A+V_B}{2m + \eta (m_B-m)} +  \eta \frac{2V_B}{m_A}.
  \end{align}
  \end{lemma}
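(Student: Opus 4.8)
The plan is to deduce both bounds from the one‑dimensional slicing inequality of Lemma~\ref{lem:symmetrization-new}, applied in the chosen coordinate direction, and then to turn the slice‑energy term $\int_\R E_{1D}(a(t),b(t))\,{\rm d}t$ into the asserted volume contributions by a bookkeeping of the three slice types. I spell this out for the \emph{small projection $m_B$} and then indicate the changes for the \emph{small projection $m_A$}.

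First, by \eqref{eq: part one slicing} together with \eqref{eq: 1DDD} one has
\[
E(A,B)\ \ge\ 2\,\mathcal L^1(\mathcal T^{\rm pure}_A)+2\,\mathcal L^1(\mathcal T^{\rm pure}_B)+(2+\eta)\,\mathcal L^1(\mathcal T^{\rm mix})+m(2+\eta p).
\]
Since $pm=m_A+m_B-m$, the last summand equals $T+\eta m_B$ with $T:=2m+\eta(m_A-m)$, and one checks the elementary identities $T-2m_A=(2-\eta)(m-m_A)\ge0$ and $2m-T=\eta(m-m_A)\ge0$. Hence \eqref{eq: lower1} will follow once I show
\[
2\,\mathcal L^1(\mathcal T^{\rm pure}_A)+2\,\mathcal L^1(\mathcal T^{\rm pure}_B)+(2+\eta)\,\mathcal L^1(\mathcal T^{\rm mix})\ \ge\ \frac{4(V_A+V_B)}{T}+\frac{\eta V_B}{m_B}.
\]

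To prove this I would use the a.e.\ slice bounds $a(t)\le m_A$ on $\mathcal T^{\rm pure}_A$, $b(t)\le m_B$ on $\mathcal T^{\rm pure}_B$, and $a(t)+b(t)\le m$, $b(t)\le m_B$ on $\mathcal T^{\rm mix}$. With $W:=\int_{\mathcal T^{\rm mix}}b(t)\,{\rm d}t$, and using $V_A+V_B=U^{\rm pure}_A+U^{\rm pure}_B+U^{\rm mix}$ and $V_B=U^{\rm pure}_B+W$, the right‑hand side equals $\tfrac{4}{T}U^{\rm pure}_A+\big(\tfrac{4}{T}+\tfrac{\eta}{m_B}\big)U^{\rm pure}_B+\tfrac{4}{T}U^{\rm mix}+\tfrac{\eta}{m_B}W$. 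From the first inequality in \eqref{eq: B2}, together with $m\ge m_A$ and $0<\eta<2$, one gets by elementary algebra $(2-\eta)T\ge4m_B$; this and $T\ge2m_A$ give
\[
\tfrac{4}{T} U^{\rm pure}_A\le\tfrac{4m_A}{T}\mathcal L^1(\mathcal T^{\rm pure}_A),\qquad \big(\tfrac{4}{T}+\tfrac{\eta}{m_B}\big)U^{\rm pure}_B\le\big(\tfrac{4m_B}{T}+\eta\big)\mathcal L^1(\mathcal T^{\rm pure}_B)\le2\,\mathcal L^1(\mathcal T^{\rm pure}_B),
\]
while $\tfrac{4}{T} U^{\rm mix}+\tfrac{\eta}{m_B}W\le\big(\tfrac{4m}{T}+\eta\big)\mathcal L^1(\mathcal T^{\rm mix})$. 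Therefore the left‑hand side minus the right‑hand side is at least
\[
\Big(2-\tfrac{4m_A}{T}\Big)\mathcal L^1(\mathcal T^{\rm pure}_A)-\Big(\tfrac{4m}{T}-2\Big)\mathcal L^1(\mathcal T^{\rm mix})=\frac{2(m-m_A)}{T}\Big((2-\eta)\,\mathcal L^1(\mathcal T^{\rm pure}_A)-\eta\,\mathcal L^1(\mathcal T^{\rm mix})\Big),
\]
which is $\ge0$ by the second inequality in \eqref{eq: B2}. Tracking equalities: one needs equality in \eqref{eq: part one slicing} (so a.e.\ slice of $A$, $B$, $A\cup B$ is an interval), $a(t)=m_A$ a.e.\ on $\mathcal T^{\rm pure}_A$, and $a(t)+b(t)=m$, $b(t)=m_B$ a.e.\ on $\mathcal T^{\rm mix}$; and if $m_A<m$ or $m_B<(2-\eta)m/2$ then $(2-\eta)T>4m_B$ strictly, which forces $\mathcal L^1(\mathcal T^{\rm pure}_B)=0$.

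For the \emph{small projection $m_A$}, I would run the same argument with $A$ and $B$ interchanged (so $m_A\leftrightarrow m_B$, $V_A\leftrightarrow V_B$): the first two conditions in \eqref{eq: A2} take over the roles of the two conditions in \eqref{eq: B2} (here $m_B\ge\frac{2}{2+\eta}m$ makes the first one non‑vacuous, and $\eta\le\eta_{\rm triple}$, $r\le1/3$ supply the room needed for the coefficient comparisons), yielding $E(A,B)\ge 2m+\eta(m_B-m)+\eta m_A+\frac{4(V_A+V_B)}{2m+\eta(m_B-m)}+\frac{\eta V_A}{m_A}$; finally $r\le1/3$ gives $V_A\ge3V_B\ge2V_B$, so weakening $\frac{\eta V_A}{m_A}$ to $\frac{2\eta V_B}{m_A}$ produces \eqref{eq: lower2}. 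The one genuinely delicate point throughout is the mixed‑slice accounting: since $T\le2m$, with equality only when $m_A=m$, the mixed slices by themselves fall short of carrying their share of $\frac{4(V_A+V_B)}{T}$, and the whole estimate hinges on the fact that the resulting deficit $\frac{2\eta(m-m_A)}{T}\mathcal L^1(\mathcal T^{\rm mix})$ is exactly absorbed by the surplus $\frac{2(2-\eta)(m-m_A)}{T}\mathcal L^1(\mathcal T^{\rm pure}_A)$ coming from the pure‑$A$ slices — which is precisely why the length‑type hypotheses appear in \eqref{eq: B2} and \eqref{eq: A2}. Everything else is routine manipulation of the explicit quantities.
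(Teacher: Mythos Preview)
Your argument for the first part (small projection $m_B$) is correct and is essentially the paper's proof, just reorganized: you compare left- and right-hand sides directly, while the paper writes the energy as the target plus three remainder integrals and shows their sum is nonnegative. In particular, your key inequality $(2-\eta)T\ge 4m_B$ is exactly the paper's intermediate claim $m_B\le \tfrac{(2-\eta)^2}{4}m+\tfrac{(2-\eta)\eta}{4}m_A$, and your surplus/deficit balance between pure-$A$ and mixed slices is the same computation the paper carries out.

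The second part (small projection $m_A$), however, has a genuine gap. Swapping $A$ and $B$ in your Step~1 argument requires the swapped version of the \emph{second} inequality in \eqref{eq: B2}, namely $(2-\eta)\,\mathcal L^1(\mathcal T^{\rm pure}_B)\ge \eta\,\mathcal L^1(\mathcal T^{\rm mix})$, because that is precisely what makes your surplus/deficit balance nonnegative. But no such hypothesis is present in \eqref{eq: A2}: the condition $m_B\ge \tfrac{2}{2+\eta}m$ is of an entirely different nature and says nothing about the relative sizes of $\mathcal L^1(\mathcal T^{\rm pure}_B)$ and $\mathcal L^1(\mathcal T^{\rm mix})$. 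Indeed, configurations with $\mathcal L^1(\mathcal T^{\rm pure}_B)=0$, $\mathcal L^1(\mathcal T^{\rm mix})>0$, and $m_B<m$ are compatible with \eqref{eq: A2}, and for these your swapped balance is strictly negative. So you cannot conclude the intermediate bound with $\eta V_A/m_A$, and hence cannot weaken it to $2\eta V_B/m_A$ afterwards.

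The paper avoids this by going for the weaker target $2\eta V_B/m_A=2\eta r V_A/m_A$ \emph{from the start}: it distributes the $V_B$-contribution as $\tfrac{2\eta r\,a(t)}{m_A}$ over the slices containing $A$-mass. The factor $2r\le 2/3$ (from $r\le 1/3$) is then small enough that, together with $m_B\ge\tfrac{2}{2+\eta}m$ and $\eta\le\eta_{\rm triple}$, the mixed-slice remainder $(2+\eta)-\tfrac{4m}{T'}-2\eta r$ is already nonnegative on its own. Thus no balancing against pure-$B$ slices is needed, and the three remainder integrals are each separately $\ge 0$. This is the missing idea in your Step~2.
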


We postpone the proofs of the lemmas to Section \ref{sec:proof_lemmas}, \EEE and continue with the second part of the proof of the main theorem.

\begin{proof}[Proof of Proposition \ref{prop: Type2,3}]
Let $r>0$ be such that $r\le  \bar r( \EEE\eta) = \max\lbrace r_{12}(\eta), r_{13}(\eta)\rbrace \EEE $. We split the proof into various steps.  We first show that one of the conditions  \eqref{eq: B1} or \eqref{eq: B2} is satisfied, where \eqref{eq: B1}  corresponds to Type II and  \eqref{eq: B2}  corresponds to Type III. The proof is then split into the cases  \eqref{eq: B1} and \eqref{eq: B2}, where for  \eqref{eq: B1} we distinguish further between  $r\le    \eta/2$  and   $r >   \eta/2$ to show that in the case $r >   \eta/2$ configurations are not minimizers.  (The latter corresponds to the fact that configurations of {\rm Type II} do not exist for $r >    \eta/2$.)

\noindent \emph{Step 1: Conditions \eqref{eq: B1} and \eqref{eq: B2}.}
We prove that  \eqref{eq: B1} or \eqref{eq: B2} is satisfied.  If
there exists  one coordinate direction such that the   corresponding
$m,m_A,m_B$ satisfy $ m_B >   \frac{2-\eta}{2\eta}(   (2+\eta)m_A -
2m)$, then \eqref{eq: B1} holds. Otherwise, for {\it both} coordinate directions, $m_B$ satisfies
$$  m_B \le  \frac{2-\eta}{2\eta}\big(   (2+\eta)m_A - 2m   \big).  $$
In this case, choosing  any
orientation with the  corresponding $m,m_A,m_B$  and letting $
\mathcal{T}^{\rm mix} \EEE$, $ \mathcal{T}^{\rm pure}_A \EEE$,
 and \EEE $ \mathcal{T}^{\rm pure}_B \EEE$ as in \eqref{eq: T'}, we have
\begin{align}
&m_B \le  \frac{2-\eta}{2\eta}\big(   (2+\eta)m_A - 2m   \big), \label{eq: mB1}\\
&\mathcal{ L \EEE}^1( \mathcal{T}^{\rm pure}_B \EEE \cup 
  \mathcal{T}^{\rm mix} \EEE) \le  \Big(\frac{2}{\eta} -1  \Big) \mathcal{ L  \EEE}^1(  \mathcal{T}^{\rm   pure}_A ). \label{eq: mB2}
\end{align}  
In fact, $\mathcal{ L \EEE}^1( \mathcal{T}^{\rm pure}_B \EEE \cup  \mathcal{T}^{\rm mix} \EEE) = \tilde{m}_B$,  $\mathcal{ L \EEE}^1( \mathcal{T}^{\rm pure}_A \EEE \cup  \mathcal{T}^{\rm mix} \EEE) = \tilde{m}_A$, and $\mathcal{ L
  \EEE}^1( \mathcal{T}^{\rm mix} \EEE \cup  \mathcal{T}^{\rm pure}_A \EEE \cup  \mathcal{T}^{\rm pure}_B \EEE ) = \tilde{m}$, where  $\tilde{m}_A$, $\tilde{m}_B$,  and \EEE $ \tilde{m}$ are associated to the other coordinate direction. By assumption we have  $ \tilde{m}_B \le    \frac{2-\eta}{2\eta}(   (2+\eta)\tilde{m}_A - 2\tilde{m})$ and, since $\tilde{m}_A \le \tilde{m}$, we get  $ \tilde{m}_B  \le \frac{2-\eta}{2} \tilde{m}$  , i.e.,
$$  \mathcal{ L \EEE}^1( \mathcal{T}^{\rm pure}_B \EEE \cup 
  \mathcal{T}^{\rm mix} \EEE) = \tilde{m}_B \le   \frac{2-\eta}{2} \tilde{m} =  \frac{2-\eta}{2} \mathcal{ L
  \EEE}^1( \mathcal{T}^{\rm mix} \EEE \cup  \mathcal{T}^{\rm
  pure}_A \EEE \cup  \mathcal{T}^{\rm pure}_B \EEE ).  $$
Now, this implies 
$$\mathcal{ L}^1( \mathcal{T}^{\rm pure}_B  \cup 
  \mathcal{T}^{\rm mix} ) \le \Big(1-\frac{\eta}{2} \Big) \mathcal{ L }^1( \mathcal{T}^{\rm pure}_B  \cup 
  \mathcal{T}^{\rm mix} ) + \Big(1-\frac{\eta}{2} \Big) \mathcal{ L }^1( \mathcal{T}^{\rm pure}_A)$$
which gives \eqref{eq: mB2}. This along with \eqref{eq: mB1} yields \eqref{eq: B2}. As the choice of the orientation is arbitrary, we can choose it such that the ratio of the projection of $A$ is maximized, i.e.,
  \begin{align}\label{eq: maximize}
\frac{m_A}{m} \ge \frac{\tilde{m}_A}{\tilde{m}} =  \frac{\mathcal{ L
  \EEE}^1( \mathcal{T}^{\rm mix} \EEE \cup  \mathcal{T}^{\rm pure}_A \EEE \cup  \mathcal{T}^{\rm pure}_B \EEE ) - \mathcal{ L
  \EEE}^1(  \mathcal{T}^{\rm pure}_B \EEE )}{ \mathcal{ L
  \EEE}^1( \mathcal{T}^{\rm mix} \EEE \cup  \mathcal{T}^{\rm pure}_A \EEE \cup  \mathcal{T}^{\rm pure}_B \EEE )}.  
  \end{align}

 \noindent \emph{Step 2: \eqref{eq: B2}.}  
 If  \eqref{eq: B2} holds, we have the lower bound \eqref{eq:
   lower1}. We now use the fact that for $x,y>0$ we have   $x + 4(V_A
 + V_B)/x \geq 4 \sqrt{V_A + V_B}$  and $y + V_B/y \geq 2 \sqrt{V_B}$
 with equality if and only if $ x =2 \sqrt{V_A + V_B}$ and $y =
 \sqrt{V_B}$  in order to \EEE conclude
\begin{align*}
E(A,B)  \geq  2m + \eta (m_A-m) + \eta m_B +  4\frac{V_A+V_B}{2m + \eta (m_A-m)} +  \eta \frac{V_B}{m_B}  \ge 4 \sqrt{V_A + V_B} + 2 \eta \sqrt{V_B}. 
\end{align*}
This shows the lower bound \eqref{eq: energy-type2-3}, more precisely that under \eqref{eq: length-B2} we have $E(A,B) \ge E_{\rm III}(r,\eta)$.

If equality holds, Lemma \ref{lemma: smalli} implies that   $a(t) + b(t) = m$ and $b(t) = m_B$ for a.e.\ $t \in  \mathcal{T}^{\rm mix} $, and $a(t) = m_A$ for a.e.\ $t \in \mathcal{T}^{\rm pure}_A$.  The second equality yields
\begin{align}\label{eq: lati}
 2m + \eta (m_A-m) = 2\sqrt{V_A+V_B}, \quad \quad \text{ and }\quad \quad m_B =\sqrt{V_B}.
 \end{align}
  If $(A,B)$ is a minimizer, we also have $r \le  r_{23}(\eta)$ or $\eta \le \eta_{\rm triple}$ as otherwise the  energy of Type II would be more convenient, see Figure \ref{diagram}. We now show that equality in the above estimate also implies $\mathcal{L}^1(\mathcal{T}^{\rm pure}_B )= 0$.
By Lemma \ref{lemma: smalli} it suffices to check that  $m_A < m$ or $m_B < (2-\eta)m/2$. Without restriction we suppose that $m_A = m$ and show $m_B < (2-\eta)m/2$. To this end, we observe that in this case $m = m_A = \sqrt{V_A+V_B} = \sqrt{V_B}\sqrt{1/r+1}$ and $m_B = \sqrt{V_B}$ hold. Now, one can check that $\frac{2-\eta}{2}\sqrt{1/r+1}>1$ in both cases $\lbrace (\eta,r)\colon \eta \in (0,2), r \le  r_{23}(\eta)\rbrace$ or $(\eta,r) \in (0,\eta_{\rm triple}] \times (0,1]$.


Now,  \eqref{eq: maximize} and  $\mathcal{L}^1(\mathcal{T}^{\rm
  pure}_B )= 0$ give $m_A=m$.  The properties  $a(t) + b(t) = m =m_A$
for a.e.\ $t \in \mathcal{T}_A^{\rm pure}  \cup \mathcal{T}^{\rm mix}
$,  $b(t) = m_B$ for a.e.\ $t \in \mathcal{T}^{\rm mix}$, and
$\mathcal{L}^1(\mathcal{T}^{\rm pure}_B )= 0$ imply that $A \cup B$ is
 a rectangle having  one side of \EEE length $m$, where $m= m_A$
satisfies $m = \sqrt{V_A+V_B}$, and $B$ is a rectangle with one 
side of length \EEE $m_B =\sqrt{V_B}$, see \eqref{eq: lati}.  This yields that $B$ is a square with  side \EEE $\sqrt{V_B}$  and $A \cup B$ is a square with  side \EEE $m = \sqrt{V_A + V_B}$, i.e., the optimal configuration $(A,B)$ is of {\rm Type III}.

 \noindent \emph{Step 3: \eqref{eq: B1} and  $r \le \eta /2$.}  
  If  \eqref{eq: B1} and  $r \leq \eta/2$ \EEE hold, we have the lower bound \eqref{eq: lower3}.   We now use the fact that for $x,y>0$ we have  $x + V_A/x \geq 2 \sqrt{V_A}$  and  $\eta y +
2V_B/y \geq 2\sqrt{2\eta} \sqrt{V_B}$ with equality if and only if $x
= \sqrt{V_A}$ and $y = \sqrt{2V_B/\eta}$  in order to \EEE conclude
\begin{align}
E(A,B)   &\geq 2m_A + \eta\big(   m_B + (2\eta^{-1}-1)(m-m_A)  \big)  +  2\frac{V_A}{m_A} +  2\frac{V_B}{ m_B + (2\eta^{-1}-1)(m-m_A) } \nonumber \\
&   \ge 4 \sqrt{V_A} + 2\sqrt{2\eta}\sqrt{V_B}. \label{eq: optimal1}
\end{align}
This shows the lower bound \eqref{eq: energy-type2-3}, more precisely that under \eqref{eq: length-B1} we have $E(A,B) \ge E_{\rm II}(r,\eta)$, see \eqref{eq: ener2}.

If equality holds, the first equality implies  $a(t) = m_A$ for a.e.\
$t \in \mathcal{T}_A^{\rm pure} $  and $b(t) = m_B$ for a.e.\ $t \in
\mathcal{T}^{\rm pure}_B $ by Lemma \ref{lemma: biggi}. If $p =0$, we
thus get that $A$ and $B$ are two rectangles attached to each other
with vertical interface. If $p>0$, equality and Lemma \ref{lemma:
  biggi} also imply that  $\mathcal{L}^1(\mathcal{T}^{\rm mix} )= 0$,
and thus $A$ and $B$ are two rectangles attached to each other with
horizontal interface. Up to changing coordinates, we can assume
without restriction that the latter \EEE arrangement occurs. Then, the
   rectangles have sides of lengths \EEE $m_A = \sqrt{V_A}$ and  $m_B = \sqrt{2V_B/\eta} -   (2\eta^{-1}-1)(m-m_A) $, respectively. We observe that $m_B \le \sqrt{V_A}$ since $V_B/V_A = r \le \eta /2$. This implies that $A$ is a square with  side \EEE   $\sqrt{V_A}$, $m=m_A$, and $B$ is a rectangle with  sides \EEE   $\sqrt{2V_B/\eta}$ and $ \sqrt{\eta V_B/2}$, i.e.,   optimal configurations $(A,B)$ are of {\rm Type II}.  Note that in the borderline case $r = \eta /2$ this corresponds to configurations of Type I.

 In Step 2 and Step 3, we have  established the desired lower bound for the energy and have characterized ground-state configurations.  We now need to exclude the remaining case in Step 4.

  \emph{Step 4: \eqref{eq: B1}  and   $r  > \eta/2$,
    $r<r_{13}(\eta)$.}  In \EEE this case, we show that
  configurations are not minimizers  by checking that the energy is
  strictly larger than the  energy of Type III. We distinguish three
   subcases: \EEE
$${\rm (a)} \ \ \max\lbrace m_A, m_B\rbrace \le \frac{2+\eta p}{2+\eta} m, \quad \quad  {\rm (b)} \ \    m_A > \frac{2+\eta p}{2+\eta} m, \quad \quad  {\rm (c)}  \ \  m_B > \frac{2+\eta p}{2+\eta} m.   $$

 Subcase (a): \EEE We use Lemma \ref{lemma1} and  the
forthcoming \EEE Remark \ref{rem: for later} to show that \eqref{eq: main estimate} holds. Optimizing in $p$ and $m$ we thus get 
$$E(A,B) \ge  2\sqrt{4+2\eta}\sqrt{V_A + V_B},   $$
corresponding  to the energy of  {\rm Type I}. see \eqref{eq: ener1}.  Since $r < r_{13}(\eta)$,  the energy of {\rm Type III} given in \eqref{eq: ener3} is optimal, which implies
\begin{align}\label{eq: 311}
E(A,B) \ge 2\sqrt{4+2\eta}\sqrt{V_A + V_B} >  4\sqrt{V_A + V_B} + 2\eta \sqrt{V_B}.
\end{align}
This shows that $(A,B)$ is not a minimizer.

 Subcase (b):  \EEE We treat the case $m_A > \frac{2+\eta p}{2+\eta} m$. Use the relation $m(1+p) = m_A + m_B$, which after a short computation yields $m_B \le m + \frac{2}{\eta}(m_A-m)$. This  is equivalent to 
\begin{align}\label{eq: contraint}
 m_B + (2\eta^{-1}-1)(m-m_A) \le m_A.
\end{align}
Since \eqref{eq: B1} is satisfied, \eqref{eq: lower3} in  Lemma \ref{lemma: biggi} holds. We get
$$E(A,B)   \geq 2m_A + \eta\big(   m_B + (2\eta^{-1}-1)(m-m_A)  \big)  +  2\frac{V_A}{m_A} +  2\frac{V_B}{ m_B + (2\eta^{-1}-1)(m-m_A) }. $$
As in Step 3, we now  minimize with respect to the variables $x = m_A$ and $y =  m_B + (2\eta^{-1}-1)(m-m_A)$, but now under the constraint that $y \le x$, see \eqref{eq: contraint}. In Step 3, we have seen that the optimal values of the convex functions $x\mapsto x + V_A/x $  and  $y \mapsto \eta y +
2V_B/y$ are $x = \sqrt{V_A}$ and $y = \sqrt{2V_B/\eta}$, respectively.   We distinguish the cases $m_A \le \sqrt{2V_B/\eta}$ and $m_A > \sqrt{2V_B/\eta}$.
 
 If $m_A \le \sqrt{2V_B/\eta}$, optimizing with respect to $y$ under the constraint $y \le x = m_A \le  \sqrt{2V_B/\eta}$ yields
\begin{align*}
E(A,B) &  \geq 2 x + \eta y  +  2\frac{V_A}{x} +  2\frac{V_B}{ y }  \ge    2 x + \eta x  +  2\frac{V_A}{x} +  2\frac{V_B}{ x }.
\end{align*}
Optimizing with respect to $x$ leads to $x =   \sqrt{2(V_A + V_B)/(2+\eta)}$ and thus,
\begin{align}\label{eq: the easy case}
E(A,B) \ge 2\sqrt{4+2\eta}\sqrt{V_A + V_B}.
\end{align}
As in \eqref{eq: 311}, this yields a strict lower bound to the  energy
of Type III,  showing that $(A,B)$ is not a minimizer. \EEE

Now, consider $m_A > \sqrt{2V_B\eta}$. As $r > \eta /2$, this implies
$m_A> \sqrt{2rV_A/\eta} > \sqrt{V_A}$, i.e., $m_A$ is  larger \EEE than the optimal value $x = \sqrt{V_A}$. This allows to estimate the energy from below by 
$$E(A,B)   \geq 2 \sqrt{2rV_A/\eta} + \eta y  +  2\frac{V_A}{\sqrt{2rV_A/\eta}} +  2\frac{V_B}{ y }.  $$
Optimizing with respect to $y$, we find  $y = \sqrt{2V_B/\eta}$ and thus
$$E(A,B)   \ge \sqrt{2V_A} \big(  2 \sqrt{r/\eta}   +  \sqrt{\eta/r} \big) +   2\sqrt{2\eta V_B} = \sqrt{2V_A} \big(  2 \sqrt{r/\eta}   +  \sqrt{\eta/r} + 2\sqrt{\eta r} \big).  $$
To conclude the proof, we need to show that 
$$ \sqrt{2V_A} \big(  2 \sqrt{r/\eta}   +  \sqrt{\eta/r} + 2\sqrt{\eta r} \big) > \sqrt{V_A}\big(   4 \sqrt{1+r}  + 2\eta \sqrt{r} \big)      =    4\sqrt{V_A + V_B} + 2\eta \sqrt{V_B}.$$
Dividing by $\sqrt{V_A r}$ this is equivalent to check  that
$$H_\eta(r) := \sqrt{2} \big(  2 /\sqrt{\eta}   +  \sqrt{\eta}/r + 2\sqrt{\eta} \big)  -   \big(   4 \sqrt{\tfrac{1}{r}+1}  + 2\eta  \big)  > 0.  $$
We compute
$$\frac{{\rm d}}{ {\rm d}r}H_\eta(r) = - \frac{\sqrt{2\eta}}{r^2} + 2\frac{1}{r^2\sqrt{1+1/r}} $$
to see that the function $H_\eta$ attains its minimum at $r =
\frac{\eta}{2-\eta}$  and is decreasing in $[0,
\frac{\eta}{2-\eta}]$. \EEE Now, we can directly check that
$$H_\eta(r)  \ge H_\eta(\eta/(2-\eta)) > 0  $$
for all $\eta \in (0, 1/2)$. If $\eta \in [1/2,\eta_{\rm triple})$, we
can calculate $r_{13}(\eta) < \frac{\eta}{2-\eta}$ and thus
$H_\eta(r)\geq H_\eta(r_{13}(\eta))$. On the other hand, 
                                %
one can check that  
$$ H_\eta(r_{13}(\eta)) > 0  $$
for all $\eta \in [1/2,\eta_{\rm triple})$. (Note that
$H_\eta(r_{13}(\eta)) = 0$ for $\eta = \eta_{\rm triple}$.)
Summarizing, also in this case the Type III energy is a strict lower
bound  and $(A,B)$ is not a minimizer. \EEE




 Subcase (c):  \EEE We treat the case $m_B > \frac{2+\eta p}{2+\eta} m$.  Use the relation $m(1+p) = m_A + m_B$, which, in a similar fashion as before, yields $m_A \le m + \frac{2}{\eta}(m_B-m)$. This  is equivalent to 
\begin{align}\label{eq: contraintXXX}
 m_A + (2\eta^{-1}-1)(m-m_B) \le m_B.
\end{align}
We further distinguish the  subcases \EEE
$${\rm (c1)} \ \    m_A \le  \frac{2-\eta}{2\eta}\big(   (2+\eta)m_B - 2m   \big), \quad \quad  {\rm (c2)} \ \    m_A >  \frac{2-\eta}{2\eta}\big(   (2+\eta)m_B - 2m   \big). $$
First, we consider  the subcase \EEE (c1). As $r > \frac{\eta}{2}$ and  $r < r_{13}(\eta)$,  we get  $\eta < \eta_{\rm triple}$ and $r \le 1/3$, cf.~Figure  \ref{diagram}. Thus, \eqref{eq: A2} is satisfied. Then, using $x + 4(V_A + V_B)/x \geq 4 \sqrt{V_A + V_B}$ 
and  \EEE $y + 2V_B/y \geq 2 \sqrt{2V_B}$ for $x,y >0$ we \EEE conclude
  $$E(A,B)  \geq  2m + \eta (m_B-m) + \eta m_A +  4\frac{V_A+V_B}{2m + \eta (m_B-m)} +  \eta \frac{2V_B}{m_A} \ge  4 \sqrt{V_A + V_B} +  2 \eta \sqrt{2V_B}.$$
Comparing to \eqref{eq: ener3}, this is strictly large than the Type
III energy  and $(A,B)$ is not a minimizer. \EEE
  
 Consider now the subcase \EEE (c2).  As \eqref{eq: A1}  is satisfied, Lemma \ref{lemma: biggi} yields 
  $$ E(A,B)  \ge  2m_B + \eta\big(   m_A + (2\eta^{-1}-1)(m-m_B)  \big)  +  2\frac{V_B}{m_B} +  2\frac{V_A}{ m_A + (2\eta^{-1}-1)(m-m_B) }.$$ 
We  minimize with respect to the variables $x = m_A + (2\eta^{-1}-1)(m-m_B)$ and $y =  m_B $, but now under the constraint $x \le y$, see \eqref{eq: contraintXXX}. One can check that optimal values for an unconstrained minimization   are $x =\sqrt{2V_A/\eta}$ and $y = \sqrt{V_B}$, respectively.   We distinguish the cases $y = m_B \le \sqrt{2V_A/\eta}$ and $m_B > \sqrt{2V_A/\eta}$.
 
 If $y  \le \sqrt{2V_A/\eta}$, we get $x \le \sqrt{2V_A/\eta}$ and  optimizing with respect to $x$ under the constraint $x \le y$ yields
\begin{align*}
E(A,B) &  \geq 2 y + \eta x  +  2\frac{V_A}{y} +  2\frac{V_B}{ x }  \ge    2 y + \eta y  +  2\frac{V_A}{y} +  2\frac{V_B}{ y } \ge  2\sqrt{4+2\eta}\sqrt{V_A + V_B},
\end{align*}
where the last step follows by the argument in \eqref{eq: the easy case}.   As in \eqref{eq: 311}, this  shows that $(A,B)$ is not a minimizer.

Now, consider $y > \sqrt{2V_A/\eta}$. As $V_A > V_B$ and $\eta < 2$, we clearly have $y > \sqrt{V_B}$.   This allows to estimate the energy from below by 
$$E(A,B)   \geq   2 y + \eta x  +  2\frac{V_A}{y} +  2\frac{V_B}{ x }  \ge  2 \sqrt{2V_A/\eta} + \eta x  +  2\frac{V_A}{\sqrt{2V_A/\eta}} +  2\frac{V_B}{ x }  .  $$
Optimizing with respect to $x$, we find  $x = \sqrt{2V_B/\eta}$ and thus
$$E(A,B)   \geq \sqrt{2V_A} \big(  2 /\sqrt{\eta}   +  \sqrt{\eta} \big) +   2\sqrt{2\eta V_B} = \sqrt{2V_A} \big(  2 /\sqrt{\eta}   +  \sqrt{\eta} + 2\sqrt{\eta r} \big).  $$
To conclude the proof, we need to show that 
$$ \sqrt{2V_A} \big(  2 /\sqrt{\eta}   +  \sqrt{\eta} + 2\sqrt{\eta r} \big) > \sqrt{V_A}\big(   4 \sqrt{1+r}  + 2\eta \sqrt{r} \big)      =    4\sqrt{V_A + V_B} + 2\eta \sqrt{V_B}.$$
Dividing by $\sqrt{V_A }$ this is equivalent to check  that
$$ H_\eta(r) := \sqrt{2} \big(  2 /\sqrt{\eta}   +  \sqrt{\eta} + 2\sqrt{\eta r} \big)  - \big(   4 \sqrt{1+r}  + 2\eta \sqrt{r} \big)  > 0. $$
It is elementary to see that $H_\eta$ is decreasing in $r$, namely using $r> \eta/2$   we get 
$$  H'_\eta(r) = -\frac{2}{\sqrt{1 + r}}  - \frac{\eta}{\sqrt{r}} + \frac{\sqrt{2\eta}}{\sqrt{r}} < -\frac{2}{\sqrt{1 + r}} +2 -2\sqrt{r} \le 0.$$
 Since $r \le 1/3$, see Figure \ref{diagram}, we get
\begin{align*}
H_\eta(r) \ge H_\eta(1/3) =  \sqrt{2} \big(  2 /\sqrt{\eta}   +  \sqrt{\eta} + 2\sqrt{\eta /3} \big)  - \big(   8/\sqrt{3}  + 2\eta /\sqrt{3} \big)  > 0
\end{align*}
for all $\eta \in [0,\eta_{\rm triple}]$.
\end{proof}

\section{Proofs of the Lemmas}\label{sec:proof_lemmas}

\subsection{Proof of Lemma \ref{lem:psmaller13}}
For convenience, we denote by $(m_1,m_2)$ the length of the projections of $A \cup B$ on the two coordinate directions, by $(m_1^A,m_2^A)$ the length of projections of $A$, and by $(m_1^B,m_2^B)$ the length of projections of $B$. For $i = 1,2$ we denote $p_i = \frac{m_i^A + m_i^B}{m_i} - 1$. Note that with  the  notation  from Subsection \ref{sec: prel} we have $p=p_1$ and $p' = p_2$. 

Suppose   that $p_1+p_2 > 2\frac{\sqrt{4+2\eta} -2 }{\eta}$. Write $\bar{p} = \frac{\sqrt{4+2\eta} -2 }{\eta}$ for shorthand. By   the second part of Lemma \ref{lem:symmetrization-new} applied to both the first and second coordinate, and rewriting the energy as done below \eqref{P5}, \EEE  we get
\begin{align}
E(A,B) &= \int_{\partial^* (A  \cup  B)}  (|\nu \cdot e_1| +
         |\nu \cdot e_2|)  \, {\rm d}\mathcal{H}^{1} +
         \eta\int_{\partial^* A  \cap  \partial^*  B}  (|\nu \cdot e_1| +
         |\nu \cdot e_2|)  \, {\rm d}\mathcal{H}^{1} \EEE  \\ &\ge m_1 (2+\eta p_1) + m_2 (2+  \eta p_2)   > m_1 (2+\eta p_1) + m_2 (2+  \eta (2\bar{p} - p_1)). 
\end{align}
Using $m_1 m_2 \ge V_A + V_B$ and optimizing with respect to $m_1$ we get
$$E(A,B) > m_1 (2+\eta p_1) + \frac{V_A+V_B}{m_1} (2+  \eta (2\bar{p} - p_1)) = 2\sqrt{(2+\eta p_1)(2+  \eta (2\bar{p} - p_1))  (V_A+V_B)}.$$
Then optimizing with respect to $p_1$ we find $p_1 = \bar{p}$ and thus
$$E(A,B) > 2(2+\eta \bar{p})   \sqrt{  V_A+V_B} =  2 \sqrt{4+2\eta}   \sqrt{  V_A+V_B},  $$
where the  last identity follows from the definition of
$\bar{p}$. This shows  $E(A,B) > E_{\rm I}(r,\eta)$, see \eqref{eq:
  ener1}. In particular, if $E(A,B) \le  E_{\rm I}(r,\eta)$, we have
$p_1+p_2 \le 2\frac{\sqrt{4+2\eta} -2 }{\eta}$, and thus
$\min\lbrace p_1,p_2\rbrace \le \frac{\sqrt{4+2\eta} -2 }{\eta}$. As
$\eta \in (0,2)$, this shows  $\min\lbrace p_1,p_2\rbrace \le 1/2$.

\subsection{Proof of Lemma \ref{lemma1}} We now proceed with the proof of Lemma \ref{lemma1}. To this end, we
need three further technical results,  which are then proved in the
following Subsections \ref{sec: lem:appendix}-\ref{sec: lemma: letztes lemma} \EEE 
 We define    $f \colon [0,\infty)
\to \R$ as
\begin{align}\label{eq: f def}
f(x) = \twopartdef{2}{x = 0}{2+\eta}{x > 0.}
\end{align}
and note that 
\begin{equation}
  E_{1D}(a(t),b(t)) =
  \left\{
    \begin{array}{ll}
      f(\alpha(t))&\quad \text{on} \ \mathcal{T}_A,\\
      f(\beta(t)) &\quad \text{on} \ \mathcal{T}_B,\\
      2+\eta&\quad \text{on} \ \mathcal{T}_0,
    \end{array}
  \right. 
\end{equation} 
where $E_{1D}$ is given in \eqref{eq: 1DDD}, and  $\alpha$ and $\beta$ stand for  the ratios defined on $\mathcal{T}_A$ and $\mathcal{T}_B$, see \eqref{eq: calT}--\eqref{eq: alohabeta}. \EEE  For $r \in [1/2,1]$ and
$p \in [0,{1}/{2}]$, we  define the auxiliary functions  $g_A^r,
\, g_B^r : [0,\infty) \to \mathbb{R}$ \EEE as 
\begin{align}
g_A^r(\alpha) & = \frac{1+\alpha}{r-\alpha} \Big(\frac{f(\alpha)}{ \min\lbrace m, (1+\alpha) m_A \rbrace   } - \frac{2+\eta}{(1 + \eta\frac{p}{2}) m} \Big),   \label{eq:gagb}\\
g_B^r(\beta) & = \frac{1+\beta}{1-r\beta} \Big(\frac{f(\beta)}{ \min\lbrace m, (1+\beta) m_B \rbrace} - \frac{2+\eta}{(1 +\eta \frac{p}{2}) m} \Big) \label{eq:gagb2}
\end{align}

\begin{lemma}\label{lem:appendix}
Let $r \ge \max \lbrace r_{12}(\eta),r_{13}(\eta)\rbrace$   and $p \in [0,1/2]$.  Suppose that 
\begin{equation}\label{eq:assumptionsonmamb}
m_A \geq \frac{2+\eta p}{2+\eta} m > \frac{\eta+2p}{2+\eta} m \geq m_B.   
\end{equation}
Then, we have
\begin{align}\label{eq:appendix}
 {\rm (a)}  \ \ \min_{\alpha \in [0,r]} (g_A^r(\alpha)) =  g_A^r(0) \le 0; \EEE  \quad \quad  {\rm (b)}  \ \ \min_{\beta \in [0,1/r]} g_B^r(\beta) =  \min \bigg\{ g_B^r(0),   g_B^r \bigg(\frac{m}{m_B} - 1 \bigg) \bigg\}  \ge 0. \EEE
\end{align}
The minimum of $g_B^r$ is attained at $\frac{m}{m_B} - 1$ if and only if $\frac{m_B}{m} \ge  \frac{r(2+\eta p)}{2+2r}$. Similarly, if
\begin{equation}\label{eq:assumptionsonmambprime}
m_B \geq \frac{2+\eta p}{2+\eta} m > \frac{\eta+2p}{2+\eta} m \geq m_A,    
\end{equation}
we have
\begin{align}
{\rm (c)} \ \ \min_{\alpha \in [0,r]} g_A^r(\alpha) =  \min \bigg\{ g_A^r(0),   g_A^r \bigg(\frac{m}{m_A} - 1 \bigg) \bigg\}  \ge 0;  \quad \quad  
{\rm (d)} \ \ \min_{\beta \in [0,1/r]} (g_B^r(\beta)) =  g_B^r(0) \le 0.
\end{align}
The minimum of $g_A^r$ is attained at $\frac{m}{m_A} - 1$ if and only if $\frac{m_A}{m} \ge  \frac{2+\eta p}{2+2r}$.  \EEE
\end{lemma}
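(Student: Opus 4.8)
The plan is to analyze the four functions $g_A^r$ on $[0,r]$ and $g_B^r$ on $[0,1/r]$ by first recording the sign of the bracketed term at the breakpoint of the $\min\{m,(1+\alpha)m_A\}$ (resp. the $m_B$ version), and then studying monotonicity on each of the two resulting subintervals. Since $f$ jumps from $2$ to $2+\eta$ at $0$, the point $\alpha=0$ (resp.\ $\beta=0$) must always be treated separately; on $(0,\infty)$ we have $f\equiv 2+\eta$, so on that open half-line $g_A^r$ and $g_B^r$ are smooth rational functions whose derivatives can be computed explicitly. First I would prove (a): for $\alpha>0$ the bracket in \eqref{eq:gagb} equals $\frac{2+\eta}{\min\{m,(1+\alpha)m_A\}}-\frac{2+\eta}{(1+\eta p/2)m}$; since $m_A\ge\frac{2+\eta p}{2+\eta}m$ by \eqref{eq:assumptionsonmamb}, one checks that $(1+\alpha)m_A\ge(1+\eta p/2)m$ already at $\alpha=0$, hence $\min\{m,(1+\alpha)m_A\}=m$ for all $\alpha\ge 0$ wait — one must be careful: $\min\{m,(1+\alpha)m_A\}\le m$ always, so actually the bracket is $\ge \frac{2+\eta}{m}-\frac{2+\eta}{(1+\eta p/2)m}\ge 0$; combined with the positive prefactor $\frac{1+\alpha}{r-\alpha}>0$ on $[0,r)$, this shows $g_A^r(\alpha)\ge 0$ for $\alpha\in(0,r]$, while $g_A^r(0)=\frac1r\big(\frac{2}{\min\{m,m_A\}}-\frac{2+\eta}{(1+\eta p/2)m}\big)$, which is $\le 0$ precisely because $m_A\ge\frac{2+\eta p}{2+\eta}m$ forces $\min\{m,m_A\}$ large enough; this gives the minimum at $0$ and the sign claim. (Here the hypotheses $r\ge\max\{r_{12},r_{13}\}$ and $p\le 1/2$ enter only through the numerical inequalities needed to make these comparisons go through.)

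Next I would prove (b), which is the substantive part. For $\beta>0$ the prefactor $\frac{1+\beta}{1-r\beta}$ is positive on $[0,1/r)$, and the bracket is $\frac{2+\eta}{\min\{m,(1+\beta)m_B\}}-\frac{2+\eta}{(1+\eta p/2)m}$. Because $m_B\le\frac{\eta+2p}{2+\eta}m<\frac{2+\eta p}{2+\eta}m\le(1+\eta p/2)m$, there is a genuine breakpoint $\beta^*=\frac{m}{m_B}-1>0$: for $\beta\le\beta^*$ the bracket equals $\frac{2+\eta}{(1+\beta)m_B}-\frac{2+\eta}{(1+\eta p/2)m}$, a decreasing function of $\beta$ which is still positive at $\beta=\beta^*$ (where it vanishes) — wait, at $\beta^*$ it equals $\frac{2+\eta}{m}-\frac{2+\eta}{(1+\eta p/2)m}\ge 0$, strictly positive unless $p=0$; for $\beta\ge\beta^*$ the bracket is the constant $\frac{2+\eta}{m}-\frac{2+\eta}{(1+\eta p/2)m}\ge 0$. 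Thus $g_B^r$ is a product of two nonnegative quantities on all of $(0,1/r)$, giving $g_B^r\ge 0$ there, and $g_B^r(0)=\frac1{1}\big(\frac{2}{\min\{m,m_B\}}-\frac{2+\eta}{(1+\eta p/2)m}\big)=\frac{2}{m_B}-\frac{2+\eta}{(1+\eta p/2)m}$, which is $\ge 0$ since $m_B$ is small; so the minimum over $[0,1/r]$ is attained either at $0$ or at $\beta^*$, and $g_B^r\ge 0$. To identify \emph{when} the minimum sits at $\beta^*$, I would compare $g_B^r(0)$ with $g_B^r(\beta^*)$: at $\beta^*$ the bracket is $\frac{\eta p}{2}\cdot\frac{2+\eta}{(1+\eta p/2)m}$ times a constant, and $g_B^r(\beta^*)=\frac{m/m_B}{1-r(m/m_B-1)}\cdot(\text{that bracket})$; setting this $\le g_B^r(0)$ and simplifying yields exactly the threshold $\frac{m_B}{m}\ge\frac{r(2+\eta p)}{2+2r}$. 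The sign of the denominator $1-r\beta^*=1-r(m/m_B-1)$ needs checking (it should be positive under \eqref{eq:assumptionsonmamb} since $m_B$ is bounded below by $\frac{\eta+2p}{2+\eta}m$ and $r\le\ldots$), and I expect this is where one must be slightly careful.

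Parts (c) and (d) are entirely symmetric to (b) and (a) respectively under the swap $A\leftrightarrow B$, $r\leftrightarrow 1/r$ — except that the prefactor of $g_B^r$ is $\frac{1+\beta}{1-r\beta}$ rather than $\frac{1+\alpha}{r-\alpha}$, so in the regime \eqref{eq:assumptionsonmambprime} one reruns the argument of (b) on $g_A^r$ (now with $m_A$ small, so the breakpoint $\frac{m}{m_A}-1$ is relevant) and the argument of (a) on $g_B^r$ (now $m_B$ large, so $\min\{m,(1+\beta)m_B\}=m$ throughout and $g_B^r(0)\le 0$). The threshold $\frac{m_A}{m}\ge\frac{2+\eta p}{2+2r}$ for $g_A^r$ comes out of the analogous comparison $g_A^r(0)$ vs $g_A^r(\frac{m}{m_A}-1)$, noting that the prefactor $\frac{1+\alpha}{r-\alpha}$ accounts for the absence of an extra factor of $r$ compared with the $m_B/m$ threshold in (b). \textbf{The main obstacle} I anticipate is not any single estimate but the bookkeeping: one must verify, using only $r\ge\max\{r_{12}(\eta),r_{13}(\eta)\}$ and $p\in[0,1/2]$ together with \eqref{eq:assumptionsonmamb}/\eqref{eq:assumptionsonmambprime}, that every denominator ($r-\alpha$, $1-r\beta$, $1-r(m/m_B-1)$, etc.) keeps a definite sign on the relevant interval and that the breakpoint comparisons reduce to the clean thresholds stated — the algebra is elementary but the case split (which of the two endpoints realizes the minimum, and whether $p=0$ or $p>0$) has to be organized carefully.
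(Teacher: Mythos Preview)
Your treatment of (a) and (d) is fine and matches the paper (in fact your observation $\min\{m,(1+\alpha)m_A\}\le m$ is a slight shortcut over the paper's two-interval split). The problem is in (b) and, by symmetry, (c).

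You establish that on $(0,1/r)$ the bracket is nonnegative and the prefactor is nonnegative, hence $g_B^r\ge 0$ there; together with $g_B^r(0)\ge 0$ this gives the sign claim. But from this alone you cannot conclude that the minimum on $[0,1/r]$ is attained at $0$ or at $\beta^*=\frac{m}{m_B}-1$. On $(0,\beta^*]$ the bracket is \emph{decreasing} while the prefactor $\frac{1+\beta}{1-r\beta}$ is \emph{increasing}; the product of a positive decreasing and a positive increasing function can have an interior minimum. The paper closes this gap by differentiating $g_B^r$ on $(0,\beta^*]$ and observing that the derivative has the \emph{constant} sign of
\[
T(\eta,p,r)\;=\;r\Big(1+\tfrac{\eta p}{2}\Big)\frac{m}{m_B}-(1+r),
\]
so that $g_B^r$ is monotone there; combined with the (easy) monotonicity on $[\beta^*,1/r]$, this forces the minimum on $(0,1/r]$ to sit at $\beta^*$ (if $T\le 0$) or at $0^+$ (if $T\ge 0$), and since $g_B^r(0)<g_B^r(0^+)$ one obtains $\min g_B^r=\min\{g_B^r(0),g_B^r(\beta^*)\}$.

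Your second step is also off: the threshold $\frac{m_B}{m}\ge\frac{r(2+\eta p)}{2+2r}$ is \emph{not} what comes out of the comparison $g_B^r(\beta^*)\le g_B^r(0)$. A quick check with $\eta=1$, $p=\tfrac12$, $r=1$, $m_B=\tfrac23 m$ (so $\frac{m_B}{m}=\tfrac23>\tfrac58=\frac{r(2+\eta p)}{2+2r}$) gives $g_B^r(0)=0.6/m$ but $g_B^r(\beta^*)=1.8/m$, so the value comparison goes the wrong way. The threshold is exactly the condition $T\le 0$, i.e.\ it encodes the \emph{sign of the derivative} on $(0,\beta^*]$, not a direct value comparison. (This is also how the paper verifies that $\beta^*\le 1/r$ in the regime $T\le 0$, a point you correctly flagged as needing care.) In short: you must compute $(g_B^r)'$ on $(0,\beta^*]$; the rest of your outline then goes through.
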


\begin{lemma}\label{lemma: extra}
 Let $r >   r_{12}(\eta)$, $r \ge r_{13}(\eta)$, and $p \in [0,1/2]$.

{\flushleft \rm (a)} It holds that $g_B^r(0) + g_A^r(0) > 0$. 

{\flushleft \rm  (b)} If $\frac{\eta+2p}{2+\eta} \ge \frac{m_B}{m} \ge  \frac{r(2+\eta p)}{2+2r}$, we get  $g_B^r(\frac{m}{m_B} - 1) +  g^r_A(0) \ge 0$. If $\max \lbrace 1 - \frac{m_A}{m}, r - r_{13}(\eta)\rbrace >0$, the inequality is strict.  

{ {\flushleft \rm  (c)} If $ \frac{\eta+2p}{2+\eta} \EEE \ge \frac{m_A}{m} \ge \frac{2+\eta p}{2+2r}$ and $m_B > m_A$, \EEE we get $g_A^r(\frac{m}{m_A} - 1) +  g^r_B(0) > 0$.}
\end{lemma}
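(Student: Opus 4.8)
\textbf{Plan for the proof of Lemma \ref{lemma: extra}.} The three assertions are purely analytic statements about the explicit functions $g_A^r$ and $g_B^r$ defined in \eqref{eq:gagb}--\eqref{eq:gagb2}, so the strategy is to evaluate these functions at the relevant points and reduce each inequality to a statement about the one‑variable data $(r,\eta,p)$ together with the geometric ratios $m_A/m$, $m_B/m$. Throughout I would use that on the points of interest the relevant value of $f$ is known: $f(0)=2$, while $f\big(\tfrac{m}{m_B}-1\big)=f\big(\tfrac{m}{m_A}-1\big)=2+\eta$ whenever $m_B<m$ (resp. $m_A<m$), and that the ``$\min$'' in the denominators of \eqref{eq:gagb}--\eqref{eq:gagb2} simplifies: at $\alpha=\tfrac{m}{m_A}-1$ one has $(1+\alpha)m_A=m$, so the min equals $m$, and similarly for $\beta$. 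I would also record at the outset the elementary fact $r_{12}(\eta)=\eta/2$, so that $r>r_{12}(\eta)$ means precisely $\eta<2r$, a strict inequality that will be the source of the strictness in (a).

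For part (a): substituting $\alpha=\beta=0$ gives
\[
g_A^r(0)+g_B^r(0)=\frac1r\Big(\frac{2}{m}-\frac{2+\eta}{(1+\eta p/2)m}\Big)+\Big(\frac{2}{m}-\frac{2+\eta}{(1+\eta p/2)m}\Big),
\]
using that $\min\{m,m_A\}$ and $\min\{m,m_B\}$ both equal the respective projection length but the numerator $f(0)/m_A$ and $f(0)/m_B$ appear — here I must be slightly careful: $g^r_A(0)=\frac1r\big(\frac{f(0)}{\min\{m,m_A\}}-\frac{2+\eta}{(1+\eta p/2)m}\big)$ and likewise for $g^r_B(0)$, and since $m_A\le m$, $m_B\le m$ we have $\min\{m,m_A\}=m_A$ and $\min\{m,m_B\}=m_B$ only in the relevant regime; but in fact the clean way is to note $m_A,m_B\le m$ always, so $\frac{2}{m_A}\ge\frac2m$, $\frac2{m_B}\ge\frac2m$. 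After collecting terms the sign of the sum is governed by whether $\frac2{m}\cdot(\text{something})$ exceeds $\frac{2+\eta}{1+\eta p/2}$; the key algebraic point is that $2+\eta p\le 2+\eta$ for $p\le 1$ while $1+\eta p/2$ makes the bound $\frac{2+\eta}{1+\eta p/2}$ at most $\frac{2+\eta}{1}$ and at least $2$ when $p\le 1$ — I would push the inequality through using $p\le 1/2$ and $\eta<2r$, arriving at a strict inequality. The cleanest route may be to first prove it for the worst case $m_A=m_B=m$ and $p=1/2$, where everything is explicit, and then observe monotonicity in $m_A,m_B$ (decreasing the projection only increases $g^r_\bullet(0)$).

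For parts (b) and (c): at $\beta=\tfrac{m}{m_B}-1$ we have $f(\beta)=2+\eta$ and $\min\{m,(1+\beta)m_B\}=m$, so
\[
g_B^r\Big(\tfrac{m}{m_B}-1\Big)=\frac{m/m_B}{\,1-r(m/m_B-1)\,}\Big(\frac{2+\eta}{m}-\frac{2+\eta}{(1+\eta p/2)m}\Big)=\frac{(2+\eta)}{m}\cdot\frac{m/m_B}{1-r(m/m_B-1)}\cdot\frac{\eta p/2}{1+\eta p/2},
\]
which is nonnegative, and one adds $g^r_A(0)=\frac1r\big(\frac2{m_A}-\frac{2+\eta}{(1+\eta p/2)m}\big)$; the hypothesis $\frac{m_B}{m}\ge\frac{r(2+\eta p)}{2+2r}$ ensures the denominator $1-r(m/m_B-1)$ is positive (this is where that inequality is used), and then the sum is a sum of a nonnegative term and $g^r_A(0)$, so I must show $g^r_A(0)\ge -g_B^r(\tfrac{m}{m_B}-1)$. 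I expect this to follow by plugging in $m_A\le m$ and the range constraints on $m_B/m$; the strictness claim under $\max\{1-m_A/m,\,r-r_{13}(\eta)\}>0$ should come from: if $m_A<m$ then $g^r_A(0)$ strictly increases; if $r>r_{13}(\eta)$ then there is strict slack in the basic inequality $E_{\rm I}\le E_{\rm III}$ which is exactly the identity making $g^r_A(0)+\text{(that term)}$ vanish at the boundary. Part (c) is the mirror image with $A$ and $B$ swapped, using $m_B>m_A$ and $r\le 1$ in place of $r_{12},r_{13}$; strictness is automatic there because $m_B>m_A$ is a strict inequality and $r<1$ generically.

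\textbf{Main obstacle.} The genuine difficulty is bookkeeping rather than ideas: one must track the three regimes for the $\min$ in the denominators, keep the sign of $1-r\beta$ (resp. $r-\alpha$) correct, and identify which of the many two‑sided constraints on $m_A/m$, $m_B/m$ is actually invoked at each step. The subtle point where real inequality input (beyond trivial estimates) is needed is tying the vanishing of the relevant sum at the boundary to the defining equations of $r_{13}(\eta)$ — i.e. recognizing that the expressions assembled here are, up to positive factors, exactly $E_{\rm III}(r,\eta)-E_{\rm I}(r,\eta)$ (for (b)) and its $A\leftrightarrow B$ analogue (for (c)), so that the sign and strictness are inherited from the heuristic computations \eqref{I-III} already carried out in Section \ref{sec:main}. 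Making that correspondence precise, and thereby avoiding a brute‑force case analysis, is the step I would invest the most care in.
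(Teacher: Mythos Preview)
Your plan captures the right first moves (evaluating $g_A^r,g_B^r$ at the relevant points and simplifying the $\min$'s), but there are two genuine gaps.

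\textbf{Part (a).} Your ``worst case $m_A=m_B=m$, $p=1/2$'' is inconsistent: by definition $p=(m_A+m_B)/m-1$, so $m_A=m_B=m$ forces $p=1$. More importantly, $m_A$ and $m_B$ are \emph{not} free variables once $m$ and $p$ are fixed; the constraint $m_A+m_B=(1+p)m$ means that decreasing $m_A$ (which raises $g_A^r(0)$) forces $m_B$ to increase (which lowers $g_B^r(0)$), so your monotonicity argument does not go through. The paper instead substitutes $m_B=(1+p)m-m_A$, minimizes the resulting convex expression over $x=m_A/m\in[0,1]$ (the minimizer is $x_*=(p+1)/(\sqrt r+1)$), and then verifies the inequality
\[
\frac{(1+\sqrt r)^2}{1+r}>\frac{(2+\eta)(1+p)}{2+\eta p}
\]
by monotonicity in $r$ and $p$, using $r\ge\bar r(\eta)$ and $p\le 1/2$.

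\textbf{Parts (b) and (c).} Your hope of avoiding a case analysis by recognizing the sum as a positive multiple of $E_{\rm III}-E_{\rm I}$ is only partially correct. That identification is essentially what happens in the paper when $x=m_A/m=1$: after reducing to a quadratic $E(x;\eta,p,r)$ in $x$, one computes $E(1;\eta,p,r)$, minimizes over $p$, and finds the resulting expression nonnegative exactly when $r\ge r_{13}(\eta)$. But for $x<1$ the dependence on $m_A/m$ and on the constraint $m_B/m\le 1+p-\tfrac{r(2+\eta p)}{2+2r}$ is essential, and the paper has to split into four subcases according to the positions of $x_{\rm opt}=-b/(2a)$ and $x_{\max}$ relative to each other and to $1$, checking each with explicit polynomial identities (including evaluations of a quadratic $F(p;\eta,r)$ at carefully chosen endpoints $p_1,p_2$). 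Part (c) is analogous. In short, the connection you identify handles only the boundary subcase; the bulk of the argument is the brute-force case split you were hoping to avoid.
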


\begin{lemma}\label{lemma: letztes lemma}
Suppose that $(A,B)$ satisfies $E(A,B) \le E_{\rm I}(r,\eta)$.  If $r
= r_{13}(\eta)$,  $m_A = m$,   and $p' >   \frac{2-\eta}{2}$, 
with $p'$ defined in \eqref{p''''}, \EEE then $ \frac{m_B}{m} <
\frac{r(2+\eta p)}{2+2r}$.  
\end{lemma}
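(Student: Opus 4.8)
The plan is to argue by contradiction, assuming $\frac{m_B}{m} \ge \frac{r(2+\eta p)}{2+2r}$ together with the hypotheses $r = r_{13}(\eta)$, $m_A = m$, and $p' > \frac{2-\eta}{2}$, and to derive $E(A,B) > E_{\rm I}(r,\eta)$, contradicting the assumption. The key observation is that $m_A = m$ forces $p = m_B/m$ by the definition \eqref{p not}, and $r = r_{13}(\eta)$ with $r \ge r_{12}(\eta)$ places us in the regime $\eta \in (0,\eta_{\rm triple}]$, where $\frac{2-\eta}{2} > \frac12$. First I would use Lemma \ref{lem:psmaller13}: since we are working under $E(A,B) \le E_{\rm I}(r,\eta)$ (else there is nothing to prove), we have $\min\{p,p'\} \le 1/2$; combined with $p' > \frac{2-\eta}{2} > \frac12$ this forces $p \le 1/2$, hence $p = m_B/m \le 1/2$, and in particular the assumed inequality $\frac{m_B}{m}\ge \frac{r(2+\eta p)}{2+2r}$ is the ``boundary'' case of Lemma \ref{lem:appendix}(b) where the minimum of $g_B^r$ is attained at $\frac{m}{m_B}-1$.

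Next I would run the slicing machinery of Lemma \ref{lem:symmetrization-new} and the decomposition \eqref{eq: calT}--\eqref{eq: alohabeta} exactly as in the proof of Lemma \ref{lemma1}: starting from \eqref{eq: part one slicing}, express $E(A,B) - E_{\rm I}(r,\eta)$ (after the change of variables $M = (2+\eta p)m/(2+\eta)$) as a nonnegative combination of the quantities $g_A^r(\alpha(t))$ on $\mathcal{T}_A$, $g_B^r(\beta(t))$ on $\mathcal{T}_B$, and the $p=0$ defect term, using the volume identity \eqref{eq: V*} to balance the $\mathcal{T}_A$ and $\mathcal{T}_B$ contributions. The point of the present lemma is to exploit the extra hypothesis $p' > \frac{2-\eta}{2}$: it means the $B$-phase is ``wide'' in the other coordinate direction, which (via $\mathcal{L}^1(\mathcal{T}^{\rm pure}_B) = 0$ is \emph{not} assumed here, so one must be careful) forces a genuinely positive lower-order contribution. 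Concretely, I would invoke Lemma \ref{lemma: extra}(b): under $\frac{\eta+2p}{2+\eta} \ge \frac{m_B}{m} \ge \frac{r(2+\eta p)}{2+2r}$ one gets $g_B^r(\frac{m}{m_B}-1) + g_A^r(0) \ge 0$, with strict inequality as soon as $\max\{1 - \frac{m_A}{m}, r - r_{13}(\eta)\} > 0$. Since we are exactly at $r = r_{13}(\eta)$ and $m_A = m$, that strictness criterion is \emph{not} met, so this route gives only the non-strict bound and I cannot conclude directly — this is where the hypothesis $p' > \frac{2-\eta}{2}$ must enter as an independent source of strictness.

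The main obstacle, therefore, is isolating where the strict gain comes from. The resolution is to revisit the second-coordinate estimate: applying the second part of Lemma \ref{lem:symmetrization-new} in the other coordinate direction gives a contribution $m'(2 + \eta p')$ to the vertical perimeter budget, and since $p' > \frac{2-\eta}{2}$ strictly, combining this with the first-coordinate bound (as in the proof of Lemma \ref{lem:psmaller13}, where $p_1 + p_2$ is controlled) yields $E(A,B) > E_{\rm I}(r,\eta)$ \emph{unless} $p$ compensates — but $p \le 1/2 < \frac{2-\eta}{2} \le p'$, so $p + p'$ cannot be pushed down to the critical value $2\frac{\sqrt{4+2\eta}-2}{\eta}$ while also keeping $p$ small enough; one checks $\frac12 + \frac{2-\eta}{2} > 2\frac{\sqrt{4+2\eta}-2}{\eta}$ for $\eta \in (0,\eta_{\rm triple}]$ by an elementary one-variable estimate. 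Hence $p + p' > 2\frac{\sqrt{4+2\eta}-2}{\eta}$ and Lemma \ref{lem:psmaller13} gives $E(A,B) > E_{\rm I}(r,\eta)$, contradicting $E(A,B) \le E_{\rm I}(r,\eta)$. Therefore the assumed inequality $\frac{m_B}{m} \ge \frac{r(2+\eta p)}{2+2r}$ is impossible, which is the claim. I expect the only genuinely delicate point to be the verification of the numerical inequality $\frac12 + \frac{2-\eta}{2} > 2\frac{\sqrt{4+2\eta}-2}{\eta}$ on $(0,\eta_{\rm triple}]$, together with confirming that the two applications of Lemma \ref{lem:symmetrization-new} are with respect to the correctly matched pair $(m,m')$ of projection lengths so that $m m' \ge V_A + V_B$ can be used in the optimization.
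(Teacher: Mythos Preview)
Your overall architecture---contradiction plus Lemma~\ref{lem:psmaller13}---matches the paper's, but the final step contains a genuine gap. You correctly observe that $m_A=m$ forces $p=m_B/m$, and you correctly extract $p\le 1/2$ from $\min\{p,p'\}\le 1/2$ together with $p'>\tfrac{2-\eta}{2}>\tfrac12$. However, to invoke Lemma~\ref{lem:psmaller13} for the contradiction you need a \emph{lower} bound on $p+p'$, and the only lower bound you have established is $p'>\tfrac{2-\eta}{2}$; for $p$ you have only the \emph{upper} bound $p\le\tfrac12$. Your sentence ``one checks $\tfrac12+\tfrac{2-\eta}{2}>2\tfrac{\sqrt{4+2\eta}-2}{\eta}$'' therefore does not yield $p+p'>2\tfrac{\sqrt{4+2\eta}-2}{\eta}$: from your information alone $p+p'$ could be as small as $\tfrac{2-\eta}{2}$, and for instance at $\eta=\tfrac12$ one has $\tfrac{2-\eta}{2}=0.75<4(\sqrt5-2)\approx 0.944$, so the inequality fails.

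The missing ingredient is precisely the contradiction hypothesis $\tfrac{m_B}{m}\ge\tfrac{r(2+\eta p)}{2+2r}$, which you mention but never exploit quantitatively. Since $p=m_B/m$, this reads $p(2+2r)\ge r(2+\eta p)$, i.e.
\[
p\ \ge\ \frac{2r}{2+(2-\eta)r}.
\]
This is the needed lower bound on $p$. Combined with $p'>\tfrac{2-\eta}{2}$ and the substitution $r=r_{13}(\eta)$, one then verifies the single-variable inequality
\[
\frac{2r_{13}(\eta)}{2+(2-\eta)r_{13}(\eta)}\ +\ \frac{2-\eta}{2}\ >\ 2\,\frac{\sqrt{4+2\eta}-2}{\eta}\qquad(\eta\in(0,\eta_{\rm triple})),
\]
which triggers Lemma~\ref{lem:psmaller13} and gives the contradiction. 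Your detour through the slicing machinery and Lemma~\ref{lemma: extra}(b) is unnecessary (as you yourself note, it yields no strictness here) and can be dropped entirely; the paper's proof is just the three lines above.
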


\begin{proof}[Proof of Lemma \ref{lemma1}]
We distinguish two cases, namely $\max\lbrace m_A,m_B \rbrace < \frac{2+\eta p}{2+\eta}m$ (Step 1) and $\max\lbrace m_A,m_B \rbrace \ge  \frac{2+\eta p}{2+\eta}m$ (Step 2).

 \noindent \emph{Step 1.} Suppose that $\max\lbrace m_A,m_B \rbrace < \frac{2+\eta p}{2+\eta}m$. By \eqref{eq: 1DDD} and \eqref{eq: part one slicing}  we have
\begin{align}\label{eq: anoth}
E(A,B) \geq (2+\eta p)m +   \int_{ \mathcal{T}^{\rm pure}_A} 2 \, {\rm d}t +  \int_{ \mathcal{T}^{\rm pure}_B} 2 \, {\rm d}t + \int_{ \mathcal{T}^{\rm mix}} (2+\eta) \, {\rm d}t.
\end{align}
We first estimate the last addend:   recalling \eqref{eq: volume not} and the fact  \EEE that by definition we have $a(t) + b(t) \leq m$ for all $t \in \mathbb{R}$, we get
\begin{equation}\label{eq:basicestimateint0}
\int_{ \mathcal{T}^{\rm mix} \EEE}  (2+\eta) \, {\rm d}t \geq \int_{ \mathcal{T}^{\rm mix} \EEE} \frac{ 2+\eta}{m} (a(t) + b(t))\, {\rm d}t = \frac{ 2+\eta}{m}  U^{\rm mix} \EEE,
\end{equation}
with strict inequality if $a(t) + b(t) < m$ on a subset of $ \mathcal{T}^{\rm mix} \EEE$ of positive $\mathcal{L}^1$-measure. Similarly,  for \EEE  the second and third addends we have
\begin{equation}
\int_{ \mathcal{T}^{\rm pure}_A \EEE} 2 \, {\rm d}t \geq \int_{ \mathcal{T}^{\rm pure}_A \EEE} \frac{2}{m_A} a(t) \, {\rm d}t = \frac{2}{m_A}  U^{\rm pure}_A \EEE
\end{equation}
and
\begin{equation}
\int_{ \mathcal{T}^{\rm pure}_B \EEE} 2 \, {\rm d}t \geq \int_{ \mathcal{T}^{\rm pure}_B \EEE} \frac{2}{m_B} b(t) \, {\rm d}t = \frac{2}{m_B}  U^{\rm pure}_B \EEE.
\end{equation}
In each of  these terms, we  \EEE subtract \EEE $\frac{2+\eta}{(1+\eta\frac{p}{2})m}$ under the integral. For $ \mathcal{T}^{\rm pure}_A \EEE$, we get
\begin{equation}\label{eq: anoth2}
\int_{ \mathcal{T}^{\rm pure}_A} 2 \, {\rm d}t \geq \left(\frac{2}{m_A} - \frac{2+\eta}{(1+\eta\frac{p}{2})m}\right)  U^{\rm pure}_A + \frac{2+\eta}{(1+\eta\frac{p}{2})m}  U^{\rm pure}_A.
\end{equation}
Due to the assumption  $\max\lbrace m_A,m_B \rbrace < \frac{2+\eta
  p}{2+\eta}m$,  the first summand on the  above \EEE right-hand
side is nonnegative and positive if $\mathcal{T}^{\rm pure}_A$ has
positive measure,   i.e.,  if we have $ \mathcal{L}^1(U_A^{\rm
  pure}) \EEE > 0$.  \EEE A  similar computation holds for $B$. Thus, combining \eqref{eq: anoth}, \eqref{eq:basicestimateint0}, \eqref{eq: anoth2} we obtain
\begin{equation}\label{eq: anoth3}
E(A,B) \ge \frac{2+\eta}{(1+\eta\frac{p}{2})m}  U^{\rm pure}_A + \frac{2+\eta}{(1+\eta\frac{p}{2})m}  U^{\rm pure}_B + \frac{ 2+\eta}{m}  U^{\rm mix}.
\end{equation}
This, along with $U^{\rm pure}_A + U^{\rm pure}_B + U^{\rm mix} =
V_A+V_B$, shows the lower bound \eqref{eq: main estimate}. If
$\mathcal{T}^{\rm pure}_A$ or $\mathcal{T}^{\rm pure}_B$  has positive
measure or  $a(t) + b(t) < m$ on a subset of $\mathcal{T}^{\rm mix}$
of  positive measure, we get  a \EEE strict inequality in
\eqref{eq: main estimate}.  Suppose now
$\mathcal{L}^1(\mathcal{T}^{\rm pure}_A) =
\mathcal{L}^1(\mathcal{T}^{\rm pure}_B) = 0 $ and $a(t) + b(t) = m$
for a.e.\ $t \in \mathcal{T}^{\rm mix}$. Then, $U^{\rm mix} = V_A +
V_B$. If $p>0$,  \eqref{eq: main estimate} is again strict by
\eqref{eq: anoth3}.     Along with \EEE $a(t) + b(t) = m$ for
a.e.\ $t \in \mathcal{T}^{\rm mix}$,  this \EEE shows that $
\mathcal{T}^{\rm mix}  = \mathcal{T}_0$ and consequently
$\mathcal{L}^1(\mathcal{T}_A) = \mathcal{L}^1(\mathcal{T}_B) = 0$,
i.e., all required properties are satisfied  also in the equality case.   \EEE

 \noindent \emph{Step 2.} Now we suppose that $\max\lbrace m_A,m_B \rbrace \ge \frac{2+\eta p}{2+\eta}m$. By the definition of $f$ in \eqref{eq: f def}, \EEE \eqref{eq: part one slicing}, \eqref{eq: calT}, and \eqref{eq: alohabeta} \EEE we get
\begin{align}\label{eq:foursummands}
E(A,B) \geq (2+\eta p)m +   \int_{\mathcal{T}_A} f(\alpha(t))\, {\rm d}t +  \int_{\mathcal{T}_B} f(\beta(t))\, {\rm d}t + \int_{\mathcal{T}_0} (2+\eta) \, {\rm d}t. \end{align}
Using  $a(t) + b(t) \le m$ and $a(t) \le m_A$ for a.e.\ $t \in \R$,
and recalling the  definition \eqref{eq:gagb} of the function \EEE
$g_A^r$,  we can  control \EEE the $\mathcal{T}_A$-term in \EEE  estimate \eqref{eq:foursummands} as  
\begin{align}
\int_{\mathcal{T}_A} f(\alpha(t))\, {\rm d}t &\geq   \int_{\mathcal{T}_A}
  \frac{1}{\min(m,(1+\alpha(t)) m_A)} (a(t) + b(t)) f(\alpha(t))\, {\rm d}t \EEE \\ &\geq
  \int_{\mathcal{T}_A} a(t) (r-\alpha(t)) g_A^r( \alpha(t) \EEE) \, {\rm d}t + \frac{2+\eta}{(1 +  \eta\frac{p}{2}) m} U_A.
\end{align}
Here, we used also the definition of $U_A$ in \eqref{eq:
  UAB}. Similarly, we  bound \EEE 
the $\mathcal{T}_B$-term in \EEE  estimate \eqref{eq:foursummands} as
\begin{align}
\int_{\mathcal{T}_B} f(\beta(t))\, {\rm d}t &\geq   \int_{\mathcal{T}_B}
  \frac{1}{\min(m, (1 + \beta(t)) m_B)} (a(t) + b(t)) f(\beta(t))\, {\rm d}t \EEE \\ &\geq
  \int_{\mathcal{T}_B} b(t) (1-r\beta(t)) g_B^r( \beta(t) \EEE) \, {\rm d}t + \frac{2+\eta}{(1 + \eta \frac{p}{2})m} U_B.
\end{align}
 Inserting \EEE  these expressions \EEE into
\eqref{eq:foursummands}  and \EEE
using  an argument similar to \eqref{eq:basicestimateint0}  for the
$\mathcal{T}_0$-term we get \EEE
\begin{equation}\label{eq: for lateruse}
E(A,B) 
 \geq (2+\eta p)m +  \int_{\mathcal{T}_A} a(t) (r-\alpha(t)) g_A^r(
  \alpha(t) \EEE ) \, {\rm d}t   +   \int_{\mathcal{T}_B}  b(t)
  (1-r\beta(t)) g_B^r( \beta (t) \EEE) \, {\rm d}t + \hat{U},
\end{equation}
where we set 
\begin{align}\label{hat U}
\hat{U} := \frac{2+\eta}{(1 + \eta \frac{p}{2})m} (U_A + U_B) +   \frac{2+\eta}{m} U_0
\end{align}
for brevity.

From now on, we assume without loss of generality that $m_A \geq m_B$. The case $m_A < m_B$ follows among similar lines as we briefly explain at the end of the proof. As $m_B = (1+p)m - m_A$  and $m_A \ge (2+\eta p)m/(2+\eta)$ by assumption, we get $m_B \leq {(\eta+2p)m}/(2+\eta)$. Therefore, Lemma~\ref{lem:appendix}(a),(b) hold. \EEE 

Recall \eqref{eq: V*}  and \EEE denote  by $U_*$ the value
\begin{align}\label{eq: U*}
U_*:= \int_{\mathcal{T}_B} b(t)(1-r\beta(t)) \, {\rm d}t= 
\int_{\mathcal{T}_A} a(t)(r-\alpha(t)) \, {\rm d}t.
\end{align}
We claim that, whenever $U_*>0$, it holds that 
\begin{align}\label{e*}
e_* :=  \int_{\mathcal{T}_B}  b(t) (1-r\beta(t)) g_B^r(\beta(t)) \, {\rm   d}t   + \EEE   \int_{\mathcal{T}_A} a(t)(r-\alpha(t)) g_A^r(      \alpha(t)) \, {\rm d}t>0.
\end{align}
In fact, using that $r-\alpha \ge 0  $ on $\mathcal{T}_A$ and $1-r \beta \ge 0$ on $\mathcal{T}_B$  we get by Lemma \ref{lem:appendix}(a),(b) \EEE
\begin{align}
e_*  &\ge    \min \left\{ g_B^r(0),   g_B^r\left (\frac{m}{m_B} - 1\right) \right\}
       \int_{\mathcal{T}_B} b(t) (1-r\beta(t)) \EEE \,
       {\rm   d}t    + g_A^r(0)   \EEE \int_{\mathcal{T}_A} a(t)(r-\alpha(t)) \, {\rm d}t. \label{rhs}
\end{align}
If $\frac{m_B}{m} <  \frac{r(2+\eta p)}{2+2r}$, the right-hand side
 of \eqref{rhs} \EEE is positive by Lemma \ref{lem:appendix}, \EEE Lemma \ref{lemma: extra}(a), and $U_*>0$, see \eqref{eq: U*}. Now, suppose instead that
$\frac{m_B}{m} \ge  \frac{r(2+\eta p)}{2+2r}$. Without restriction we can assume that $g_B^r(0) >
g_B^r(\frac{m}{m_B} - 1)$ since otherwise we conclude as before by Lemma \ref{lemma: extra}(a). \EEE  If $r>r_{13}(\eta)$ or
$m_A < m$, the right-hand side  of \eqref{rhs} \EEE is positive by
Lemma \ref{lemma: extra}(b). If $r= r_{13}(\eta)$,  $m_A =
m$, and $\mathcal{L}^1(\mathcal{T}^{\rm pure}_B) >0$,  the fact that we assumed    $g_B^r(0) >
g_B^r(\frac{m}{m_B} - 1)$ implies that \EEE  the first inequality is strict.
It remains the case $r= r_{13}(\eta)$,  $m_A = m$, and
$\mathcal{L}^1(\mathcal{T}^{\rm pure}_B) =0$. Then by assumption we
have   $p'> \frac{2-\eta}{2}m $. Thus, Lemma \ref{lemma: letztes
  lemma} implies that $\frac{m_B}{m} <  \frac{r(2+\eta p)}{2+2r}$
which contradicts our assumption $\frac{m_B}{m} \ge  \frac{r(2+\eta p)}{2+2r}$. \EEE   This yields \eqref{e*}.

%

From \eqref{eq: for lateruse}  and \eqref{e*} we  have obtained \EEE that 
\begin{align}\label{eq: strict inequality2}
E(A,B) 
& \geq (2+\eta p)m +   \int_{\mathcal{T}_A} a(t) (r-\alpha(t))
          g_A^r( \alpha(t))  \, {\rm d}t   +  
          \int_{\mathcal{T}_B}  b(t) (1-r\beta(t)) g_B^r(
          \beta(t))   \, {\rm d}t + \hat{U} \\
& \geq (2+\eta p)m  +   \hat{U},
\end{align}
where the inequality is strict whenever $U_* >0$.
By \eqref{hat U} and  the fact that \EEE $U_A + U_B + U_0 = V_A + V_B$, the lower bound \eqref{eq: main estimate} holds.  To address the equality case, we hence suppose that 
\EEE  $U_* = 0$.  By the definition of $U_*$ this implies
$\mathcal{L}^1(\mathcal{T}_A) = \mathcal{L}^1(\mathcal{T}_B) =
0$. Moreover, if  $a(t) + b(t) < m$ on a subset of $\mathcal{T}_0$ of
positive measure, inequality \eqref{eq: for lateruse}  is \EEE
strict, and thus also inequality \eqref{eq: main estimate}  is \EEE strict. Eventually, since $\hat{U} =  \frac{2+\eta}{m} (V_A + V_B)$, \eqref{eq: main estimate} is strict unless $p=0$, which yields the desired result. \EEE

 Let us conclude by mentioning that  in case  $m_A < m_B$ one
proceeds \EEE similarly by interchanging the roles of $A$ and $B$, and employing Lemma \ref{lem:appendix}(c),(d) and Lemma \ref{lemma: extra}(c).
\end{proof}

\begin{remark}\label{rem: for later}\rm 
An inspection of the proof shows that in Step 1, i.e., for  $\max\lbrace m_A,m_B \rbrace < \frac{2+\eta p}{2+\eta}m$, the assumptions \EEE $p \le 1/2$   and $r \ge r_{13}(\eta)$  are \EEE  not needed. 
\end{remark}

%
%
%
%
%
%
%
%
%
%

\subsection{Proof of Lemma \ref{lemma: biggi}} 
We start with the first part of the statement, i.e., we assume   \eqref{eq: B1}. By \eqref{eq: T'}, \eqref{eq: part one slicing}, and $m_B = (1+p)m - m_A$ \EEE  we have
\begin{align}\notag
E(A,B)  &\geq (2 + \eta p)m  +  \int_{ \mathcal{T}^{\rm pure}_A \EEE} 2 \, {\rm d}t +  \int_{ \mathcal{T}^{\rm pure}_B \EEE} 2 \, {\rm d}t + \int_{ \mathcal{T}^{\rm mix} \EEE} (2+\eta) \, {\rm d}t\\
& = \EEE 2m_A + \eta\big(   m_B + (2\eta^{-1}-1)(m-m_A)  \big)  +  2\frac{V_A}{m_A} +  2\frac{V_B}{ m_B + (2\eta^{-1}-1)(m-m_A) }  \notag\\ 
& \ \ \  +  \int_{ \mathcal{T}^{\rm pure}_A \EEE} \left(2  - \frac{2a(t)}{m_A}\right)\,
                                                                                               {\rm d}t +  \int_{ \mathcal{T}^{\rm pure}_B \EEE} \left(2  - \frac{2b(t)}{ m_B + (2\eta^{-1}-1)(m-m_A) }\right) \, {\rm d}t \\
  & \ \ \ + \int_{ \mathcal{T}^{\rm mix} \EEE} \left((2+\eta) - \frac{2a(t)}{m_A} - \frac{2b(t)}{ m_B + (2\eta^{-1}-1)(m-m_A) }\right)  \, {\rm d}t. \label{eq: N5}
\end{align}
The first two integrals  in the above right-hand side \EEE are clearly nonnegative since $a(t) \le m_A$, $b(t)\le m_B$, and $(2\eta^{-1}-1)(m-m_A) \ge 0 $. Moreover, they are strictly positive if $m(t) < m_A$ or $b(t) < m_B$ on a set of positive measure, respectively.

 We now address the third integral  in the right-hand side of
 \eqref{eq: N5}. \EEE  Omitting  $ t$ dependencies, for brevity,
 \EEE   we first estimate
\begin{align*}
\frac{2a}{m_A} + \frac{2b}{m_B + (2\eta^{-1}-1)(m-m_A)} \le \frac{2a}{m_A} + \frac{2\min\lbrace m-a,m_B\rbrace}{m_B + (2\eta^{-1}-1)(m-m_A)}.
\end{align*} 
As the  expression on the right-hand side is linear with respect
to $a$ as long as $a \ge m-m_B$, \EEE the maximum is attained    \EEE at $a= m - m_B $ or 
at \EEE $a = m_A$. Thus, using also $m_A + m_B \ge m$, we get
\begin{align*}
\frac{2a}{m_A} + \frac{2b}{m_B + (2\eta^{-1}-1)(m-m_A)} \le \max  &\Big\{ \frac{2(m-m_B)}{m_A}  + \frac{2m_B}{m_B + (2\eta^{-1}-1)(m-m_A)}, \\ &   2 + \frac{2(m-m_A)}{m_B + (2\eta^{-1}-1)(m-m_A)} \Big\}. 
\end{align*}
We treat the two  options in the above maximum term \EEE separately. First, $m_A + m_B \ge m$  implies that $m_B + (2\eta^{-1}-1)(m-m_A) \ge 2(m-m_A)\eta^{-1}$ and thus
$$ 2 + \frac{2(m-m_A)}{m_B + (2\eta^{-1}-1)(m-m_A)} \le 2+\eta. $$
We also note that, if $p>0$, then $m_A + m_B >m$, and thus the above inequality is strict. For the other case, we get after some computation
\begin{align*}
 & \frac{2(m-m_B)}{m_A}  + \frac{2m_B}{m_B + (2\eta^{-1}-1)(m-m_A)} \\
  & \quad = -\frac{ (m_A +m_B-m) (m_A (\eta^2-4) +(4 -2\eta)m  + 2\eta
    m_B )  }{  m_A ( (\eta-2) (m_A  - m) + \eta m_B    )   }   + 2+\eta.
\end{align*}
We find that for  $ m_B > \frac{2-\eta}{2\eta}\big(   (2+\eta)m_A - 2m   \big)$ (see  \eqref{eq: B1}) the second factor  in the  numerator \EEE is positive. Combining both estimates we thus get 
 $$\frac{2a}{m_A} + \frac{2b}{m_B + (2\eta^{-1}-1)(m-m_A)} \le 2+\eta ,$$
i.e., the third integral  in \eqref{eq: N5} \EEE is
nonnegative. Moreover, we have seen that  such \EEE integral is positive whenever $\mathcal{L}^1(\mathcal{T}^{\rm mix})>0$ and $p>0$.
 
Therefore,  the sum of  all \EEE integrals   in \eqref{eq: N5}
\EEE is nonnegative, and the statement follows. The proof also shows that  equality can only hold if $a(t) = m_A$ for a.e.\ $t \in \mathcal{T}_A^{\rm pure} $ and $b(t) = m_B$ for a.e.\ $t \in \mathcal{T}^{\rm pure}_B $. Additionally if $p>0$, equality implies $\mathcal{L}^1(\mathcal{T}^{\rm mix})=0$.

Under condition \eqref{eq: A1},   the proof can be verbatim
repeated \EEE  by interchanging the roles of $m_A,V_A$ and $m_B,V_B$.

\subsection{Proof of Lemma \ref{lemma: smalli}}
We treat the two cases separately.

 \noindent \emph{Step 1.}  We suppose that \eqref{eq: B2} holds. Using  \eqref{eq: T'}, \eqref{eq: part one slicing}, and   $m_A + m_B = (1+p)m$ we have 
\begin{align}
E(A,B)  &\geq (2 + \eta p) m  +   \int_{ \mathcal{T}^{\rm pure}_A \EEE} 2 \, {\rm d}t +  \int_{ \mathcal{T}^{\rm pure}_B \EEE} 2 \, {\rm d}t + \int_{ \mathcal{T}^{\rm mix} \EEE} (2+\eta) \, {\rm d}t\notag\\
&\geq 2m + \eta (m_A-m) + \eta m_B +  4\frac{V_A+V_B}{2m + \eta (m_A-m)} +  \eta \frac{V_B}{m_B}\notag \\ 
& \ \ \  +  \int_{ \mathcal{T}^{\rm pure}_A \EEE} \left(2  - \frac{4a(t)}{2m + \eta (m_A-m)}\right)\, {\rm d}t
                                                                               +  \int_{ \mathcal{T}^{\rm pure}_B \EEE} \left(2  - \frac{4b(t)}{2m + \eta (m_A-m)} - \eta \frac{b(t)}{m_B}\right) \, {\rm d}t \\
  &\ \ \ + \int_{ \mathcal{T}^{\rm mix} \EEE} \left((2+\eta) - \frac{4(a+b)(t)}{2m + \eta (m_A-m)} -  \eta\frac{b(t)}{m_B}\right)  \, {\rm d}t. \label{eq: NNN}
\end{align}
We  separately \EEE consider the three integrals  above. The
first one can be controlled as follows:
\begin{align}\label{firsti integra}
\int_{ \mathcal{T}^{\rm pure}_A \EEE} \left(2  - \frac{4a(t)}{2m + \eta (m_A-m)}\right)\, {\rm d}t &\ge \mathcal{ L\EEE}^1( \mathcal{T}^{\rm pure}_A \EEE) \left( 2- \frac{4m_A}{2m + \eta (m_A-m)} \right) \notag \\
&=  2\mathcal{ L \EEE}^1( \mathcal{T}^{\rm pure}_A \EEE)  \frac{(2-\eta)(m-m_A)}{2m + \eta (m_A-m)}.
\end{align}
The  inequality  is  strict if $a(t) < m_A$ on a set of positive measure.    We can check that the second integral in the right-hand side of
\eqref{eq: NNN} is nonnegative,   i.e.,  
\begin{align}\label{second integra}
\int_{ \mathcal{T}^{\rm pure}_B \EEE} \left(2  - \frac{4b(t)}{2m + \eta (m_A-m)} - \eta \frac{b(t)}{m_B}\right) \, {\rm d}t \ge 0. 
\end{align}
 Indeed, let us start by proving that \EEE
\begin{align}\label{eq: claim}
m_B  \le \frac{(2-\eta)^2}{4} m + \frac{(2-\eta)\eta}{4}  m_A.
\end{align}
As  $m_B \le \frac{2-\eta}{2\eta}\big(   (2+\eta)m_A - 2m   \big)$, see \eqref{eq: B2}, we calculate by $m_A \le m$ \EEE
\begin{align*}
m_B &  \le  \frac{2-\eta}{2\eta}\big(   (2+\eta)m_A - 2m   \big) =   \frac{(2-\eta)^2}{4} m + \Big( -\frac{(2-\eta)^2}{4} - 2\frac{2-\eta}{2\eta} \Big)  m      + \frac{(2-\eta) (2+\eta)}{2\eta}   m_A\\
& \le   \frac{(2-\eta)^2}{4} m + \Big( -\frac{(2-\eta)^2}{4} - 2\frac{2-\eta}{2\eta} + \frac{(2-\eta) (2+\eta)}{2\eta}  \Big) m_A \\
& =  \frac{(2-\eta)^2}{4} m + \frac{(2-\eta)\eta}{4}  m_A.
\end{align*} 
Now, using \eqref{eq: claim}  at all $t$ we have that \EEE
$$\frac{4b(t)}{2m + \eta (m_A-m)} + \eta \frac{b(t)}{m_B} \le \frac{4m_B}{2m + \eta (m_A-m)} + \eta =  \frac{4m_B + \eta (2m + \eta (m_A-m)) }{2m + \eta (m_A-m)}\le
2  $$ 
 which  shows \eqref{second integra}. The proof also shows that the inequality in \eqref{eq: claim}  is strict if we have $m_A < m$ or $m_A = m$ and  $m_B< \frac{2-\eta}{2}m$. In this case, the second integral is strictly positive, provided that  $\mathcal{L}^1( \mathcal{T}^{\rm pure}_B)>0$.

For the third integral  in the right-hand side of \eqref{eq: NNN},
\EEE we calculate
\begin{align*}
 \int_{ \mathcal{T}^{\rm mix}} \left((2+\eta) - \frac{4(a+b)(t)}{2m + \eta (m_A-m)} -
 \eta \frac{b(t)}{m_B}\right)  \, {\rm d}t & \ge \mathcal{
                                      L\EEE}^1( \mathcal{T}^{\rm mix}) \left(
                                      (2+\eta) - \frac{4m}{2m + \eta (m_A-m)} - \eta \right)
                                       \\ & =  2\mathcal{ L\EEE}^1( \mathcal{T}^{\rm mix}) \frac{\eta(m_A-m)}{2m + \eta (m_A-m)},
\end{align*}
where the inequality is strict if  $a(t) + b(t) < m$ on a set of positive measure or $b(t) < m_B$ on a set of positive measure.  
Now, taking the sum of the three integrals  in \eqref{eq: NNN} gives 
\begin{align*}
&\int_{ \mathcal{T}^{\rm pure}_A } \left(2  - \frac{4a(t)}{2m + \eta (m_A-m)}\right)\, {\rm d}t  +  \int_{ \mathcal{T}^{\rm pure}_B} \left(2  - \frac{4b(t)}{2m + \eta (m_A-m)} - \eta\frac{b(t)}{m_B}\right) \, {\rm d}t \\
 & \ \ \  + \int_{ \mathcal{T}^{\rm mix}} \left((2+\eta) - \frac{4(a+b)(t)}{2m + \eta (m_A-m)} -  \eta \frac{b(t)}{m_B}\right)  \, {\rm d}t\\
& \ \    \ge  2\mathcal{ L\EEE}^1( \mathcal{T}^{\rm pure}_A)                                                                                                                                                                                                                                \frac{(2-\eta)(m-m_A)}{2m + \eta (m_A-m)} +  2\mathcal{ L\EEE}^1( \mathcal{T}^{\rm mix}) \frac{\eta(m_A-m)}{2m + \eta (m_A-m)} \\
  & \ \ \ge   2\mathcal{ L\EEE}^1( \mathcal{T}^{\rm pure}_A)                                                                                                                                                                                                                                \frac{(2-\eta)(m-m_A)}{2m + \eta (m_A-m)} +  2\mathcal{ L\EEE}^1( \mathcal{T}^{\rm mix} \cup \mathcal{T}^{\rm pure}_B ) \frac{\eta(m_A-m)}{2m + \eta (m_A-m)},
\end{align*}
where the first inequality is a consequence of the previous estimates and the second one follows from the fact that the coefficient $\frac{\eta(m_A-m)}{2m + \eta (m_A-m)}$ is not positive. Using the second part of \eqref{eq: B2} we get   $\mathcal{ L\EEE}^1( \mathcal{T}^{\rm mix} \cup  \mathcal{T}^{\rm pure}_B) \le \frac{2-\eta}{2}\tilde{m}$ and $ \mathcal{ L\EEE}^1( \mathcal{T}^{\rm pure}_A) \ge \frac{\eta}{2}\tilde{m}$ with $\tilde{m} = \mathcal{ L\EEE}^1( \mathcal{T}^{\rm mix} \cup  \mathcal{T}^{\rm pure}_A \cup  \mathcal{T}^{\rm pure}_B)$. Thus, the sum of the integrals can be bounded from below by
$$ 2\frac{(2-\eta)(m-m_A)}{2m + \eta (m_A-m)} \frac{\eta}{2} \tilde{m}  +  2  \frac{\eta(m_A-m)}{2m + \eta (m_A-m)} \frac{2-\eta}{2} \tilde{m} = 0.$$
In view of \eqref{eq: NNN}, this concludes the proof of the lower bound \eqref{eq: lower1}.  \EEE
The arguments also show that equality can only hold if  $a(t) + b(t) = m$ and $b(t) = m_B$ for a.e.\ $t \in  \mathcal{T}^{\rm mix} $, and $a(t) = m_A$ for a.e.\ $t \in \mathcal{T}^{\rm pure}_A$. Additionally, if $m_A  <m $ or $m_B < (2-\eta)m/2$, equality can only hold if $\mathcal{L}^1(\mathcal{T}^{\rm pure}_B )= 0$.

 \noindent \emph{Step 2.}  We now come to the second part of the statement and assume that \eqref{eq: A2} holds. Using again \eqref{eq: T'}, \eqref{eq: part one slicing}, and   $m_A + m_B = (1+p)m$  we have 
\begin{align}
E(A,B)  &\geq (2 + \eta p) m  +   \int_{ \mathcal{T}^{\rm pure}_A \EEE} 2 \, {\rm d}t +  \int_{ \mathcal{T}^{\rm pure}_B \EEE} 2 \, {\rm d}t + \int_{ \mathcal{T}^{\rm mix} \EEE} (2+\eta) \, {\rm d}t\notag\\
&\geq 2m + \eta (m_B-m) + \eta m_A +  4\frac{V_A+V_B}{2m + \eta (m_B-m)} +  \eta \frac{2r V_A}{m_A}\notag \\ 
& \ \ \  +  \int_{ \mathcal{T}^{\rm pure}_A \EEE} \left(2  - \frac{4a(t)}{2m + \eta (m_B-m)} - \eta \frac{2ra(t)}{m_A} \right)\, {\rm d}t
                                                                               +  \int_{ \mathcal{T}^{\rm pure}_B \EEE} \left(2  - \frac{4b(t)}{2m + \eta (m_B-m)} \right) \, {\rm d}t \\
  &\ \ \ + \int_{ \mathcal{T}^{\rm mix} \EEE} \left((2+\eta) - \frac{4(a+b)(t)}{2m + \eta (m_B-m)} -  \eta\frac{2r a(t)}{m_A}\right)  \, {\rm d}t. \label{eq: NNNNNN}
\end{align}
We  separately show that each of three integrals   in the above
right-hand side \EEE  is nonnegative. Then, the claim follows  by recalling  $V_B = rV_A$ .

 For the first one, by $r \le 1/3$ we observe that 
$$\int_{ \mathcal{T}^{\rm pure}_A \EEE} \left(2  - \frac{4a(t)}{2m + \eta (m_B-m)} - \eta \frac{2ra(t)}{m_A} \right) \, {\rm d}t \ge \int_{ \mathcal{T}^{\rm pure}_A \EEE} \left(2  - \frac{4a(t)}{2m + \eta (m_B-m)} - \eta \frac{a(t)}{m_A} \right)\, {\rm d}t, $$
and  verbatim repeat the proof of \eqref{second integra} with $a,
\mathcal{T}^{\rm pure}_A  $ in place of $b,  \mathcal{T}^{\rm pure}_B
$ and the roles of $m_A$ and $m_B$ interchanged. Note that the
reasoning relies on $m_A \le \frac{2-\eta}{2\eta}(   (2+\eta)m_B -
2m)$ whereas   \eqref{second integra}  used \EEE the analogous condition $m_B \le  \frac{2-\eta}{2\eta}(   (2+\eta)m_A - 2m  )$.

For the second  integral on the right-hand side of \eqref{eq:
  NNNNNN}, \EEE we repeat the argument in \eqref{firsti integra} and get 
\begin{align*}
\int_{ \mathcal{T}^{\rm pure}_B \EEE} \left(2  - \frac{4b(t)}{2m + \eta (m_B-m)}\right)\, {\rm d}t \ge  2\mathcal{ L \EEE}^1( \mathcal{T}^{\rm pure}_B \EEE)  \frac{(2-\eta)(m-m_B)}{2m + \eta (m_B-m)} \ge 0.
\end{align*}
For the third integral  on the right-hand side of \eqref{eq:
  NNNNNN} 
\EEE we calculate
\begin{align*}
 \int_{ \mathcal{T}^{\rm mix}} \hspace{-0.1cm} \big((2+\eta) - \frac{4(a+b)(t)}{2m + \eta (m_B-m)} -
 \eta  &\frac{2ra(t)}{m_A}\big)  \, {\rm d}t  \ge \mathcal{
                                      L\EEE}^1( \mathcal{T}^{\rm mix}) \Big(
                                      (2+\eta) - \frac{4m}{2m + \eta (m_B-m)} - 2r\eta \Big)
                                       \\ & =  \mathcal{ L\EEE}^1( \mathcal{T}^{\rm mix}) \frac{m_B (2 \eta + \eta^2 (1 - 2 r)) - m (4 \eta r + \eta^2 (1-2r ))}{2m + \eta (m_B-m)}.
\end{align*}
It suffices to show that the  numerator \EEE is nonnegative. By using  $ m_B \ge \frac{2}{2+\eta}m$, see  \eqref{eq: A2}, we estimate
\begin{align*}
m_B (2 \eta + \eta^2 (1 - 2 r)) - m (4 \eta r + \eta^2 (1-2r )) & \ge \frac{4 \eta + 2\eta^2 (1 - 2 r)}{2+\eta}m -  (4 \eta r + \eta^2 (1-2 r)) m  \\
& = \frac{ 4-\eta^2   -r(8 + 4\eta - 2\eta^2) }{2 + \eta}m \eta.
\end{align*}
Eventually, using the remaining assumptions in \eqref{eq: A2}, namely, $\eta \le \eta_{\rm triple} \sim 0.56394$ and  $r \le 1/3$, we can check
$$4-\eta^2   -r(8 + 4\eta - 2\eta^2)\ge4-\eta^2   -(8 + 4\eta - 2\eta^2)/3 \ge 4-\eta_{\rm triple}^2   -(8 + 4\eta_{\rm triple} - 2\eta_{\rm triple}^2)/3  >0. $$
This concludes the proof.


\subsection{Proof of Lemma \ref{lem:appendix}}\label{sec: lem:appendix}

 Recall the definition of $g_A^r$ and $g_B^r$ in \eqref{eq:gagb}.
We start with the proof of (a) and (d). First, assume that
\eqref{eq:assumptionsonmamb} holds  and \EEE compute the value of $g_A^r$ at $0$, namely,
\begin{equation}
g_A^r(0) = \frac{1}{r} \Big(\frac{2}{m_A} - \frac{2+\eta}{(1 + \eta\frac{p}{2}) m} \Big),
\end{equation}
which is nonpositive since $m_A \geq \frac{(2+\eta p)m}{2+\eta}$. For $\alpha \in (0,\frac{m}{m_A} - 1)$ we have $1+ \alpha \le (1+\eta\frac{p}{2}) \frac{m}{m_A}$, so
\begin{equation}
g_A^r(\alpha) = \frac{1+\alpha}{r-\alpha} \Big(\frac{2+\eta}{(1 + \alpha) m_A} - \frac{2+\eta}{(1 + \eta\frac{p}{2}) m} \Big)  \ge 0 \EEE \geq g_A^r(0).
\end{equation}
For $\alpha \in [\frac{m}{m_A} - 1, r]$ we have
\begin{equation}\label{ffflater}
g_A^r(\alpha) = \frac{1+\alpha}{r-\alpha} \Big(\frac{2+\eta}{m} - \frac{2+\eta}{(1 +\eta \frac{p}{2}) m} \Big) \ge 0 \geq g_A^r(0),
\end{equation}
 which concludes the proof of (a). The proof of (d) is identical by interchanging the roles of $A$ and $B$, and checking $g_B^r(0) \le 0$ and $g_B^r(\beta) \ge 0$ for all $\beta \in (0,1/r]$.   

 We now proceed with the proof of (b). Again, assume that \eqref{eq:assumptionsonmamb} holds. Observe that  
\begin{equation}
g_B^r(0) = \frac{2}{m_B} - \frac{2+\eta}{(1 + \eta \frac{p}{2}) m} \ge 0,
\end{equation}
since $m_B \leq \frac{(2+\eta p)m}{2+\eta}$. However,  $g_B^r$ is not necessarily minimized at $0$. In fact, for $\beta \in (0,\frac{m}{m_B} - 1]$ we have 
\begin{equation}
g_B^r(\beta) = \frac{1+\beta}{1-r\beta} \Big(\frac{2 + \eta}{(1 + \beta) m_B} - \frac{2+\eta}{(1 +  \eta\frac{p}{2}) m} \Big), 
\end{equation}
and thus
\begin{align}\label{eq mono}
(g_B^r)'(\beta) &= \frac{2+\eta}{m_B} \Big(\frac{1}{1-r\beta} \Big)' - \frac{2+\eta}{(1 + \eta\frac{p}{2}) m} \Big(\frac{1+\beta}{1-r\beta} \Big)'  =  \frac{2+\eta}{m_B} \frac{r}{(1-r\beta)^2}  - \frac{2+\eta}{(1 + \eta\frac{p}{2}) m} \frac{1+r}{(1-r\beta)^2} \notag \\
&= \frac{2+\eta}{(1-r\beta)^2 (1 + \eta\frac{p}{2}) m} \Big( r (1 + \eta\frac{p}{2}) \frac{m}{m_B} - (1+r) \Big).
\end{align}
Note that the monotonicity  of  $g^r_B$ \EEE  depends on the 
sign of \EEE  $T(\eta, p,r) :=  r (1 + \eta\frac{p}{2}) \frac{m}{m_B} - (1+r) $. On the other hand, for $\beta \in [\frac{m}{m_B} - 1, \frac{1}{r}]$ we have $m \leq (1+\beta) m_B$, so
\begin{align}\label{eq mono2}
g_B^r(\beta) = \frac{1+\beta}{1 - r\beta} \Big(\frac{2+\eta}{m} - \frac{2+\eta}{(1 + \eta\frac{p}{2}) m} \Big), 
\end{align}
which is an increasing function of $\beta$. Summarizing, \eqref{eq mono} implies that
$$ 0 \in {\rm arg \, min} \,  g_B^r    \quad \text{if  $T(\eta, p,r)  \ge 0$ }  \quad \quad  \text{and} \quad \quad   \Big\{ \frac{m}{m_B}-1\Big\} \in   {\rm arg \, min}\,  g_B^r   \quad \text{if  $T(\eta, p,r) \le  0 $.}  $$
The latter case corresponds to the case $\frac{m_B}{m} \ge  \frac{r(2+\eta p)}{2+2r}$.  Note that in the case $T(\eta, p,r) \le  0$ we have 
$$\frac{m}{m_B} - 1 \le  \frac{2+2r}{r(2+\eta p)} - 1   =   \frac{2-rp\eta}{r(2+\eta p)}  \le \frac{1}{r},$$ 
and thus $\frac{m}{m_B} - 1$ is indeed admissible. Using formula \eqref{eq mono2} we finally get that $g_B^r(\frac{m}{m_B}-1)$ is nonnegative, which finishes \EEE the proof of (b). 

 The proof of (c) runs along similar lines as the proof of (b).   Assume that \eqref{eq:assumptionsonmambprime} holds. Then, the value of $g_A^r(0)$ is nonnegative, but again it turns out that $g_A^r$ is not necessarily minimized at $0$. For $\alpha \in (0,\frac{m}{m_A} - 1]$ we have 
\begin{equation}
g_A^r(\alpha) = \frac{1+\alpha}{r-\alpha} \Big(\frac{2 + \eta}{(1 + \alpha) m_A} - \frac{2+\eta}{(1 +  \eta\frac{p}{2}) m} \Big), 
\end{equation}
so
\begin{align}\label{eqmonoprime}
(g_A^r)'(\alpha) &= \frac{2+\eta}{m_A} \Big(\frac{1}{r-\alpha} \Big)' - \frac{2+\eta}{(1 + \eta\frac{p}{2}) m} \Big(\frac{1+\alpha}{r-\alpha} \Big)'  =  \frac{2+\eta}{m_A} \frac{1}{(r-\alpha)^2}  - \frac{2+\eta}{(1 + \eta\frac{p}{2}) m} \frac{1+r}{(r-\alpha)^2} \notag \\
&= \frac{2+\eta}{(r-\alpha)^2 (1 + \eta\frac{p}{2}) m} \Big( (1 + \eta\frac{p}{2}) \frac{m}{m_A} - (1+r) \Big).
\end{align}
The monotonicity  of  $g^r_A$ \EEE depends on the  sign of
\EEE  $\tilde T(\eta, p,r) :=  (1 + \eta\frac{p}{2}) \frac{m}{m_A} - (1+r)$. For $\alpha \in [\frac{m}{m_A} - 1, r]$ we have $m \leq (1+\alpha) m_A$, so
\begin{align}\label{eq mono2XXX}
g_A^r(\alpha) = \frac{1+\alpha}{r - \alpha} \Big(\frac{2+\eta}{m} - \frac{2+\eta}{(1 + \eta\frac{p}{2}) m} \Big), 
\end{align}
which is an increasing function of $\alpha$. To sum up, we have
$$ 0 \in {\rm arg \, min} \,  g_A^r    \quad \text{if  $\tilde T(\eta, p,r)  \ge 0$ }  \quad \quad  \text{and} \quad \quad   \Big\{ \frac{m}{m_A}-1\Big\} \in   {\rm arg \, min}\,  g_A^r   \quad \text{if  $\tilde T(\eta, p,r) \le  0 $.} $$
The latter case corresponds to  $\frac{m_A}{m} \ge \frac{2+\eta
  p}{2+2r}$. Note that in the case $\tilde T(\eta, p,r) \le  0$ we
have $\frac{m}{m_A} - 1 \le r$, so $\frac{m}{m_A} - 1$ is indeed
admissible, and using formula \eqref{eq mono2XXX} we finally get that
$g_A^r(\frac{m}{m_A}-1)$ is nonnegative. This concludes the proof.

\subsection{Proof of Lemma \ref{lemma: extra}}\label{sec: lemma: extra}
\noindent \emph{Step 1: Proof of (a).}  First, we find  \EEE 
\begin{equation}
g_B^r(0) + g_A^r(0) = \frac{1}{r} \Big(\frac{2}{m_A} - \frac{2+\eta}{(1 + \eta\frac{p}{2}) m} \Big) + \frac{2}{m_B} - \frac{2+\eta}{(1 + \eta\frac{p}{2}) m}.
\end{equation}
By the definition of $p$, we have $m_B = m (1+p) - m_A$, and therefore we can rewrite as
\begin{equation}
m( g_B^r(0) + g_A^r(0)) = \frac{2}{r \frac{m_A}{m}} + \frac{2}{1 + p - \frac{m_A}{m}} - \frac{(2+\eta)(1 + \frac{1}{r})}{(1 + \eta\frac{p}{2})}.
\end{equation}
We minimize the sum of the first two addends in terms of $x=\frac{m_A}{m} \in [0,1]$. To this end, denote
\begin{equation}
h(x) = \frac{2}{rx} + \frac{2}{1 + p - x} - \frac{(2+\eta)(1 + \frac{1}{r})}{1 + \eta\frac{p}{2}}.
\end{equation}
The function $h$ is convex on $[0,1]$ with $h'(x) = -\frac{2}{rx^2} + \frac{2}{(1+p-x)^2}$. The unique solution of $h'(x) = 0$ for $x \in [0,1]$ is given by  
\begin{equation}
x_* = \frac{p+1}{\sqrt{r} + 1}
\end{equation}
 and \EEE  corresponds to the  minimum of $h$.
Therefore, it suffices to check the value of $h$ at $x_*$, and we get
\begin{equation}
m(g_A^r(0) + g_B^r(0)) \geq \frac{2(\sqrt{r}+1)}{r (p+1)} + \frac{2(\sqrt{r}+1)}{\sqrt{r}(p + 1)} - \frac{(2+\eta)(1 + \frac{1}{r})}{(1 + \eta\frac{p}{2})}.
\end{equation}
We will now  check \EEE that   the right-hand side above \EEE is positive.  By multiplying \EEE   by ${r(p+1)}/{2}$  this
corresponds to checking the inequality \EEE
\begin{equation}
\sqrt{r}+1 + r + \sqrt{r} > \frac{(2+\eta)(1+r)(1+p)}{2+\eta p},
\end{equation}
which we reorganize as
\begin{equation}
\frac{(1 + \sqrt{r})^2}{1+r} > \frac{(2+\eta)(1+p)}{2+\eta p}.
\end{equation}
The left-hand side is increasing for $r \in [0,1]$, and thus, for $r
\ge  \bar r (\eta)=\max \{r_{12}(\eta),r_{13}(\eta)\}$,  greater or equal  than $(1+\sqrt{ \bar r \EEE(\eta)})^2(1+ \bar r \EEE(\eta))^{-1} $. The right-hand side is increasing in $p$ for $\eta \in [0,2]$ and thus smaller or equal to
$\frac{3(2+\eta)}{4+\eta }$ for $p \in [0,1/2]$. We can check directly that  $(1+\sqrt{ \bar r \EEE(\eta)})^2(1+ \bar r \EEE(\eta))^{-1} - \frac{3(2+\eta)}{4+\eta }>0$ for $\eta \in (0,2)$. 


\noindent \emph{Step 2: Proof of (b).} We now come to the second case. We compute
\begin{align*}
 g^r_A(0) + g_B^r \bigg(\frac{m}{m_B} - 1 \bigg) &  = \frac{1}{r} \Big(\frac{2}{m_A} - \frac{2+\eta}{(1 + \eta\frac{p}{2}) m} \Big) + 
\frac{m}{m_B-rm + rm_B} \Big(\frac{2+\eta}{m} - \frac{2+\eta}{(1 + \eta\frac{p}{2}) m} \Big)   \\
& =  \frac{(4+2\eta p)m-(4+2\eta )m_A}{rm_Am(2+\eta p)} +   \frac{(2+\eta)\eta p}{(m_B-rm + rm_B)(2 + \eta p) }.
\end{align*}
Since  $m_B-rm + rm_B \ge 0$ by $g_B^r(\frac{m}{m_B} - 1) \ge 0$ (see \eqref{eq mono2}), to check positivity of this expression, it suffices to show
\begin{align}\label{eq: to check}
 ((4+2\eta p)m-(4+2\eta )m_A) (m_B-rm + rm_B) + (2+\eta)\eta p r  m_Am > 0.
\end{align}
 (The equality case is indeed only possible in (iii),(iv) below if $m = m_A$ and $r = r_{13}(\eta)$.) \EEE To this end, we again use the relation $m_B = (1+p)m - m_A$  and find
\begin{align*}
&((4+2\eta p)m-(4+2\eta )m_A) (m_B-rm + rm_B) + (2+\eta)\eta p r  m_Am  \notag  \\  & =   ((4+2\eta p)m-(4+2\eta )m_A) ( (1+r)(1+p)m - (1+r)m_A -rm ) + (2+\eta)\eta p r  m_Am  \\ & =: m^2 E(x;\eta,p,r)   
\end{align*} 
where $x = \frac{m_A}{m}$ and 
\begin{align}\label{eq: express}
 E(x;\eta,p,r) = a(\eta,p,r) x^2 + b(\eta,p,r) x + c(\eta, p,r)
\end{align}
with
\begin{align*}
a(\eta, p, r) &:= (4+2\eta)(1+r), \\
b(\eta,p,r)&:= -(4+2\eta)(1+r)(1+p) + r(4+2\eta) - (4+2\eta p)(1+r) + (2+\eta)\eta p r, \\
c(\eta,p,r) &:= (4+2\eta p)((1+r)(1+p)-r). 
\end{align*}
Since $\eta \in (0,2)$, one can check that  $a(\eta, p, r) > 0$, $b(\eta, p, r) < 0$, and $c(\eta, p, r) > 0$.  By assumption we have $\frac{m_B}{m} \ge  \frac{r(2+\eta p)}{2+2r}$ and thus 
$$\frac{m_A}{m}  = 1+p - \frac{m_B}{m} \le 1+p - \frac{r(2+\eta p)}{2+2r}.$$
Setting $x_{\rm max} = 1 + p - \frac{r(2+\eta p)}{2(1+r)}$, we need to check that
$$\inf_{x \in [0,x_{\rm max}]} E(x;\eta,p,r) > 0. $$
 As $E(x;\eta,p,r)$ is a second-order  polynomial \EEE \EEE in $x$,  one
has \EEE
\begin{align}
&{\rm \arg \, min} \, E(x;\eta,p,r) = \frac{-b(\eta,p,r)}{2a(\eta,p,r)}
  =:x_{\rm opt} \nonumber\\
  &\quad \text{with}   \quad E(x_{\rm opt};\eta,p,r) = \frac{4a(\eta, p, r) c(\eta, p, r) - b(\eta, p, r)^2}{4a(\eta, p, r)}. \label{eq: mittelremime}
\end{align}
Unfortunately, we cannot show directly that $E(x_{\rm opt};\eta,p,r)$ is positive for all $\eta,p,r$. Instead, we will distinguish between cases depending on the values of $x_{\rm max}$ and $x_{\rm opt}$. Observe that, since $a(\eta, p, r) > 0$ and $b(\eta, p, r) < 0$, we have that $x_{\rm opt} > 0$. One of the following situations applies: \EEE
\begin{align*}
{\rm (i)}  \  \  &  \text{$x_{\rm opt} > x_{\rm max}$ and $x_{\rm max} \leq 1$}, \\
{\rm (ii)} \ \ &  \text{$x_{\rm opt} \leq x_{\rm max}$ and $x_{\rm opt} \leq 1$}, \\ 
{\rm (iii)}  \ \ &  \text{$x_{\rm opt} > x_{\rm max}$ and $x_{\rm max} > 1$}, \\ 
{\rm   (iv)}  \ \  &  \text{$x_{\rm opt} \leq x_{\rm max}$ and $x_{\rm opt} > 1$}.
\end{align*}
Taking into account only admissible values of $x$, i.e., that $x \in [0,\min \{ x_{\rm max}, 1 \}]$, in case (i) the function $E(x;\eta,p,r)$ is minimized at $x_{\rm max}$, in case (ii) it is minimized at $x_{\rm opt}$, and in cases (iii) and (iv) it is minimized at $1$.

Let us first consider case (i). Since $x_{\rm max} < x_{\rm opt}$, we know that the minimum of $E(\cdot;\eta,p,r)$ on $[0,x_{\rm max}]$ is attained at $x_{\rm max}$. An explicit calculation yields
$$E\Big( 1 + p - \frac{r(2+\eta p)}{2(1+r)};\eta,p,r\Big) = \eta p r (\eta p +2) \ge 0.$$
 In particular, \EEE in case (i)  the term $E(x;\eta,p,r)$
\EEE is nonnegative, and it is equal to zero if and only if $p = 0$. We show that $\inf_{x \in [0,x_{\rm max}]} E(x;\eta,p,r) = 0 $ yields to a contradiction. In this case, the above arguments show that necessarily $p=0$, $\frac{m_A}{m} = \frac{2}{2(1+r)}$, and $\frac{m_B}{m} = \frac{2r}{2(1+r)}$.  As by assumption we also have $\frac{m_B}{m} \le \frac{\eta}{2+\eta}$, we deduce $r \le \frac{\eta}{2}$. This however contradicts the assumption  $r > r_{12}(\eta) = \frac{\eta}{2}$.

In case (ii), we need to check that $4 a(\eta,p,r) c(\eta,p,r) -b(\eta,p,r)^2 > 0$, see \eqref{eq: mittelremime}. One computes
$$ F (p;\eta,r) := 4 a(\eta,p,r) c(\eta,p,r) -b(\eta,p,r)^2  = \tilde{a}(\eta,r)p^2 + \tilde{b}(\eta,r) p+ \tilde{c}(\eta,r),$$
where
\begin{align*}
\tilde{a}(\eta,r) & = -\eta^4 r^2 + 4\eta^3 r^2 + 8\eta^3 r + 20\eta^2 r^2 + 24\eta^2 r + 16\eta r^2 + 16\eta r - 16r^2 - 32r - 16, \\
\tilde{b}(\eta,r) & = 4\eta^3 r + 8\eta^2 r^2 + 24\eta^2 r + 16\eta r^2 + 16\eta r - 16\eta + 32r^2 + 32r, \\ 
\tilde{c}(\eta,r)  & = - 4 (\eta - 2r)^2.
\end{align*}
 This is  a second-order  polynomial \EEE in \EEE $p$. Since $\tilde{c}(\eta,r) < 0$, it suffices to check that there exist $0 < p_1 < p_2$ such that $F(p_1;\eta,r) > 0$ and $F(p_2;\eta,r) >0 $. Then, we automatically get $F(p;\eta,r) > 0$ for all $p \in [p_1,p_2]$.

We choose the values $p_1$ and $p_2$ such that $[p_1,p_2]$ covers all admissible values for $p$: we set  \EEE
$$p_1 :=  \EEE \frac{4r-2\eta}{4+4r-\eta^2r-2\eta r} \quad \mbox{and} \quad  p_2:=\frac{4\eta r + 2\eta +4r}{2\eta r + 4\eta + 4r + 4 - \eta^2 r}.$$
Then, a computation yields that  $x_{\rm opt} - x_{\rm max}$ equals 
\begin{align*}
&\frac{(4+2\eta)(1+r)(1+p) - r(4+2\eta) + (4+2\eta p)(1+r) - (2+\eta)\eta p r}{2(4+2\eta)(1+r)} - \Big( 1 + p - \frac{r(2+\eta p)}{2(1+r)}\Big) \\
& = \frac{p(\eta^2  r + 2 \eta  r - 4  r  - 4)           - 2 \eta + 4 r}{4(\eta+ 2) (r+1)  },
\end{align*}
i.e.,  $x_{\rm max}  \ge \EEE x_{\rm opt}$ if and only if $p  \ge  \EEE p_1 =  \frac{4r-2\eta}{4+4r-\eta^2r-2\eta r}$. (Here, use that $4+4r-\eta^2r-2\eta r >0$.) Secondly, we have
$$x_{\rm opt} \le 1 \ \Leftrightarrow \  2a(\eta,p,r) + b(\eta,p,r) \ge 0  \ \Leftrightarrow \   p (-4 -4\eta -4r - 2 \eta  r + \eta^2 r) +  ( 2 \eta  + 4 r + 4 \eta r)    \ge 0,   $$
i.e.,
$$x_{\rm opt} \le 1 \ \Leftrightarrow \   p \le p_2 = \frac{4\eta r + 2\eta +4r}{2\eta r + 4\eta + 4r + 4 - \eta^2 r}.  $$
Therefore, the interval $[p_1,p_2]$ covers all $p$ such that $x_{\rm opt} \leq x_{\rm max}$ and $x_{\rm opt} \leq 1$, i.e., the whole case (ii).  We now compute
$$ F(p_1;\eta,r) =  \frac{32 (-2 + \eta) \eta (2 + \eta)^2 (\eta - 2 r) r (1 + r)^2}{(-4 + (-4 + \eta (2 + \eta)) r)^2},$$
which is indeed positive whenever $r > r_{12}(\eta) =   {\eta}/{2}$
for $\eta \in (0,2)$.  On the other hand, \EEE
$$ F(p_2;\eta,r) \EEE =  -\frac{16 \eta (\eta + 2)^2 (r + 1)^2 (   ((\eta-6)\eta (\eta +2) - 8)r^2 - 2\eta(3\eta +2) r + 4\eta     )}{(\eta^2 r - 2 r (\eta + 2) - 4 (\eta + 1))^2}  $$
 and we are left with checking \EEE the positivity of the expression
$$-(   ((\eta-6)\eta (\eta +2) - 8)r^2 - 2\eta(3\eta +2) r + 4\eta     ). $$
This is   positive   if $r > \frac{2 \eta + 3 \eta^2 - \sqrt{\eta (2 +
    \eta)^2 (8 + 5 \eta)}}{-8 - 12 \eta - 4 \eta^2 + \eta^3}$, 
which holds \EEE true since one can indeed calculate that 
$${r \ge  \bar r \EEE(\eta) = \max\Big\{ \frac{\eta}{2},  \frac{8 +
  2\eta - 4 \sqrt{4 + 2\eta}}{\eta^2 - 8 - 2\eta + 4 \sqrt{4 + 2\eta}}
\Big\}  \geq \EEE  \frac{2 \eta + 3 \eta^2 - \sqrt{\eta (2 + \eta)^2 (8 + 5 \eta)}}{-8 - 12 \eta - 4 \eta^2 + \eta^3}} $$ 
for all  $\eta \in (0,2)$, by distinguishing the cases $\eta \ge \eta_{\rm triple}$ and $\eta \le \eta_{\rm triple}$.
Finally, we address cases (iii) and (iv), in which $E(\cdot;\eta,p,r)$ is minimized for $x = 1$.  We compute
$$E(1;\eta,p,r)/\eta =  p^2(2   r  + 2    ) + p ( \eta r  - 2   r  - 2 ) + 2r. $$
Minimizing with respect to $p$ gives the estimate
$$E(1;\eta,p,r)/\eta \ge \frac{1}{8(1+r)} \big( 8(2   r  + 2    ) r  -  ( \eta r  - 2   r  - 2 )^2\big).  $$
We compute that 
$$  8(2   r  + 2    ) r  -  ( \eta r  - 2   r  - 2 )^2 =   - r^2 (-12 - 4 \eta + \eta^2) + 4 r (2 + \eta) - 4$$
is nonnegative  if $r \ge r_*(\eta) :=  \frac{2 (-2 - \eta + 2
  \sqrt{2} \sqrt{2 + \eta})}{12  +  4 \eta - \eta^2}$ and positive if
$r > r_*(\eta)$.  By using \EEE that   $r_*(\eta) = r_{13}(\eta)$ (follows after some computation) \EEE
this shows $E(x;\eta,p,r)>0$ if $x = \frac{m_A}{m} <1$ or $r >
r_{13}(\eta)$  and \EEE  concludes the proof of point (b).  

\noindent \emph{Step 3: Proof of (c).} We prove this point with a
technique  similar \EEE to the previous one, but with a slightly different structure of the proof. First, we compute
\begin{align*}
g^r_A \bigg(\frac{m}{m_A} - 1 \bigg) + g_B^r(0) &= \frac{m}{rm_A- m + m_A} \Big(\frac{2+\eta }{m} - \frac{2+\eta}{(1 + \eta\frac{p}{2}) m} \Big) + \Big(\frac{2}{m_B} - \frac{2+\eta}{(1 + \eta\frac{p}{2}) m} \Big)   \\
&= \frac{(2+\eta)\eta p}{(rm_A- m + m_A)(2 + \eta p) } + \frac{(4+2\eta p)m-(4+2\eta )m_B}{m_B m(2+\eta p)}.
\end{align*}
Since $r m_A - m + m_A \ge 0$ by $g_A^r(\frac{m}{m_A} - 1) \ge 0$ (see \eqref{ffflater}), to check positivity of this expression, it suffices to show
\begin{align}
((4+2\eta p)m-(4+2\eta )m_B) (r m_A - m + m_A) + (2+\eta)\eta p  m_B m > 0.
\end{align}
To this end, we again use the relation $m_A = (1+p)m - m_B$ and find
\begin{align*}
&((4+2\eta p)m-(4+2\eta )m_B) (r m_A - m + m_A) + (2+\eta)\eta p m_B m  \notag  \\  & =   ((4+2\eta p)m-(4+2\eta )m_B) ( (1+r)(1+p)m - (1+r)m_B - m) + (2+\eta)\eta p m_B m  \\ & =: m^2  \hat E \EEE(x;\eta,p,r),  
\end{align*} 
where $x = \frac{m_B}{m}$ and 
\begin{align}
 \hat E \EEE(x;\eta,p,r) =  \hat a \EEE(\eta,p,r) x^2 +  \hat b \EEE(\eta,p,r) x +  \hat c \EEE(\eta, p,r)
\end{align}
with
\begin{align*}
 \hat a \EEE(\eta, p, r) &:= (4+2\eta)(1+r), \\
 \hat b \EEE(\eta,p,r) &:= -(4+2\eta)(1+r)(1+p) + (4+2\eta) - (4+2\eta p)(1+r) + (2+\eta)\eta p, \\
 \hat c \EEE(\eta,p,r) &:= (4+2\eta p)((1+r)(1+p)-1). 
\end{align*}
Again, we  observe that $ \hat a \EEE(\eta,p,r) = a(\eta, p, r) > 0$, $ \hat b \EEE(\eta,p,r) < 0$, and $ \hat c \EEE(\eta,p,r) = c(\eta,p,r)   > 0$. By assumption we have $\frac{m_A}{m} \ge  \frac{2+\eta p}{2+2r}$, and thus 
$$\frac{m_B}{m} = 1+p - \frac{m_A}{m} \le 1+p - \frac{2+\eta p}{2+2r}.$$
 Setting $ \hat x_{\rm max} \EEE = 1 + p - \frac{2+\eta p}{2(1+r)}$, we need to check that
\begin{align}\label{XEEE}
\inf_{x \in [0, \hat x_{\rm max} \EEE]}  \hat E \EEE(x;\eta,p,r) > 0. 
\end{align}
We observe that $ \hat E \EEE(x;\eta,p,r)$ is  a second-order
 polynomial \EEE in \EEE $x$ and thus
\begin{align}
{\rm \arg \, min} \,  \hat E \EEE(x;\eta,p,r) = \frac{- \hat b \EEE(\eta,p,r)}{2 \hat a \EEE(\eta,p,r)}  =: \hat x_{\rm opt}  \EEE\   \text{with}   \   \hat E \EEE( \hat x_{\rm opt} \EEE;\eta,p,r) = \frac{4 \hat a \EEE(\eta, p, r)  \hat c \EEE(\eta, p, r) -  \hat b \EEE(\eta, p, r)^2}{4 \hat a \EEE(\eta, p, r)}.
\end{align}
Again, we cannot show directly that $ \hat E \EEE( \hat x_{\rm
  opt}\EEE;\eta,p,r)$ is positive for all $\eta,p,r$, and we will
instead consider two cases depending on the values of $ \hat
x_{\rm max} \EEE$ and $ \hat x_{\rm opt} \EEE$. Since \EEE $ \hat a \EEE(\eta, p, r) > 0$ and $ \hat b \EEE(\eta, p, r) < 0$, we have that $ \hat x_{\rm opt}  \EEE> 0$. Moreover, due to 
\begin{equation*}
 \hat x_{\rm max} \EEE < 1 \Leftrightarrow p < \frac{2 + \eta p}{2+2r} \Leftrightarrow p(2 + 2r - \eta) < 2,
\end{equation*}
and $\eta \in (0,2)$, $r \le 1$, $p \le 1/2$, we have that $ \hat x_{\rm max} \EEE < 1$. Therefore, one of the following conditions holds true:
\begin{align*}
{\rm (i)}  \  \  &  \text{$ \hat x_{\rm opt}  \EEE\ge  \hat x_{\rm max} \EEE$}, \\
{\rm (ii)} \ \ &  \text{$ \hat x_{\rm opt}  \EEE<  \hat x_{\rm max} \EEE < 1$}.
\end{align*}
Taking into account only admissible values of $x$, i.e.,   $x \in
[0,\min \{  \hat x_{\rm max} \EEE, 1 \}]$, in case (i) the
function $ \hat E \EEE(x;\eta,p,r)$ is minimized at $ \hat
x_{\rm max} \EEE$ and in case (ii) it is minimized at $ \hat
x_{\rm opt}$. \EEE

Let us first consider case (i). Since $ \hat x_{\rm max} \EEE \le  \hat
x_{\rm opt}$, \EEE we know that the minimum of $ \hat E \EEE(\cdot;\eta,p,r)$ on $[0, \hat x_{\rm max} \EEE]$ is attained at $ \hat x_{\rm max} \EEE$. An explicit calculation yields
$$ \hat E \Big( 1 + p - \frac{2+\eta p}{2(1+r)};\eta,p,r\Big) = \eta p (\eta p +2) \ge 0,$$
so in case (i) the energy is nonnegative, and it is equal to zero if and only if $p = 0$. We show that $\inf_{x \in [0, \hat x_{\rm max} \EEE]}  \hat E \EEE(x;\eta,p,r) = 0 $ yields to a contradiction. In this case, the above arguments show that necessarily $p=0$, $\frac{m_B}{m} = \frac{2r}{2(1+r)}$, and $\frac{m_A}{m} = \frac{2}{2(1+r)}$. As $r \le 1$, this contradicts the assumption $m_B > m_A$. \EEE

%
%

Let us check for which values of $r$ case (ii) applies. First, we compute that   $ \hat x_{\rm opt}  \EEE-  \hat x_{\rm max} \EEE$  equals
\begin{align*}
& \frac{(4+2\eta)(1+r)(1+p) - (4+2\eta) + (4+2\eta p)(1+r) - (2+\eta)\eta p}{2(4+2\eta)(1+r)} - \Big( 1 + p - \frac{2+\eta p}{2(1+r)}\Big) \\
& = \frac{p(\eta^2 + 2\eta - 4r - 4) - 2 \eta r + 4}{4(2 + \eta) (1+r)  }.
\end{align*}
Then, since $4 - 2 \eta r > 0$, we get that $ \hat x_{\rm max} \EEE  \le  \hat x_{\rm opt} \EEE$ if and only if $\eta^2 + 2\eta - 4r - 4 \geq 0$ or $\eta^2 + 2\eta - 4r - 4 < 0$ and $p \le \frac{4 - 2\eta r}{4 + 4r - \eta^2 - 2\eta}$. We let
$$p'_1 = \frac{4 - 2\eta r}{4 + 4r - \eta^2 - 2\eta}.$$
Thus, whenever $\eta^2 + 2\eta - 4r - 4 \geq 0$, we are in case (i) and the minimal energy is positive.

Therefore, from now on we  assume that $\eta^2 + 2\eta - 4r - 4 < 0$. We now distinguish between two further cases. Let us check for which $r$ (as a function of $\eta$) we have that $p'_1 \geq \frac{1}{2}$. For this range of values for $r$ we actually have $ \hat x_{\rm max} \EEE \leq  \hat x_{\rm opt} \EEE$ for all  $p \in [0,\frac{1}{2}]$, and thus again case (i) applies. We calculate that $p'_1 \geq \frac{1}{2}$
is equivalent to
\begin{equation*}
8 - 4\eta r \geq 4 + 4r - \eta^2 - 2\eta,
\end{equation*}
which in turn leads to
\begin{equation*}
r \leq \frac{4 + 2 \eta + \eta^2}{4+ 4\eta} < 1.    
\end{equation*}
Hence, for all $r \leq \frac{4 + 2 \eta + \eta^2}{4+ 4\eta}$ and all $p \le \frac{1}{2}$ we have   $ \hat x_{\rm max} \EEE  \le  \hat x_{\rm opt} \EEE$ and thus case (i) applies. 

Finally, we need to take care of the case $r > \frac{4 + 2 \eta + \eta^2}{4+ 4\eta}$ and $ \hat x_{\rm max}   \ge  \hat x_{\rm opt}$. \EEE In this case,  the function  $ \hat E \EEE(x;\eta,p,r)$ is minimized at  $ \hat x_{\rm opt} \EEE$, and thus to confirm \eqref{XEEE} we need to check that 
$$4  \hat a \EEE(\eta,p,r)  \hat c \EEE(\eta,p,r) - \hat b \EEE(\eta,p,r)^2 > 0.$$
One computes
$$  \bar F \EEE (p;\eta,r) := 4  \hat a \EEE(\eta,p,r)  \hat c \EEE(\eta,p,r) -  \hat b \EEE(\eta,p,r)^2  =  \bar a \EEE(\eta,r) p^2 +  \bar b \EEE (\eta,r) p +  \bar c \EEE(\eta,r),$$
where
\begin{align*}
 \bar a \EEE(\eta,r) & = -\eta^4 + 4\eta^3 + 8\eta^3 r + 20\eta^2 + 24\eta^2 r + 16\eta + 16\eta r - 16r^2 - 32r - 16, \\
 \bar b \EEE (\eta,r) & = 4\eta^3 r + 8\eta^2 + 24\eta^2 r + 16\eta + 16\eta r - 16\eta r^2 + 32 + 32r, \\ 
 \bar c \EEE(\eta,r)  & = - 4 (\eta r - 2)^2.
\end{align*}
This is a  second-order  polynomial \EEE \EEE in variable $p$. Since $ \bar c \EEE (\eta,r) < 0$, it suffices to check that there exist $p'_1, p'_2 > 0$ such that $ \bar F \EEE(p'_1;\eta,r) > 0$ and $ \bar F \EEE(p'_2;\eta,r) > 0$. Then, we automatically get $ \bar F \EEE(p;\eta,r) > 0$ for all $p \in [p'_1,p'_2]$. We take $p'_1$ as above, i.e., $p'_1 = \frac{4 - 2\eta r}{4 + 4r - \eta^2 - 2\eta}$ and $p'_2 = \frac12$, i.e., the maximum value admissible for $p$. An explicit computation shows that
\begin{equation*}
 \bar F \EEE(p'_1; \eta,r) = \frac{32 (\eta - 2) \eta (\eta + 2)^2 (r + 1)^2 (\eta r - 2)}{(\eta^2 + 2\eta - 4r - 4)^2} > 0,
\end{equation*}
where we use $r \le 1$ and $\eta \in (0,2)$. Another explicit calculation yields 
\begin{align*}
 \bar F \EEE(p'_2; \eta,r) &= 4 (2 + \eta) (4 + \eta) (1 + r) (1 + 3 r) - \frac14 (\eta^2 - 2 \eta (1 + 4 r) - 4 (3 + 5 r))^2.
\end{align*}
We compute the derivative   with respect to $r$ and get 
\begin{align*}
\frac{\partial}{\partial r}  \bar F \EEE(p'_2; \eta,r) &=  8 + 4 \eta^3 + \eta (28 - 16 r) + \eta^2 (18 - 8 r) - 8 r.
\end{align*}
One can verify that $\frac{\partial}{\partial r}  \bar F \EEE(p'_2; \eta,r) >0$   for all $\eta \in (0,2)$ and $r \in [0,1]$. To see this, we observe that the derivative is linear in $r$, and  for all $\eta \in (0,2)$  it satisfies 
\begin{equation*}
\frac{\partial}{\partial r}  \bar F \EEE(p'_2; \eta,r)|_{r=0} = 8 + 4 \eta^3 + 28 \eta + 18 \eta^2 > 0
\end{equation*}
and
\begin{equation*}
\frac{\partial}{\partial r}  \bar F \EEE(p'_2; \eta,r)|_{r=1} = 2 \eta (6 + 5 \eta + 2 \eta^2) > 0.
\end{equation*}
Therefore, $ \bar F \EEE(p'_2; \eta,r)$ is increasing as a function of $r$ for every fixed $\eta \in (0,2)$. As we treat the case $r > \frac{4 + 2 \eta + \eta^2}{4+ 4\eta}$, it is thus enough to compute the value of $ \bar F \EEE(p'_2; \eta, r)$ at $r = \frac{4 + 2 \eta + \eta^2}{4+ 4\eta}$. We find
\begin{align*}
 \bar F \EEE\left(p'_2; \eta, \frac{4 + 2 \eta + \eta^2}{4+ 4\eta}\right) &= \frac{\eta (8 + 6 \eta + \eta^2)^2}{2 (1 + \eta)} >0
\end{align*}
for all $\eta \in (0,2)$.
Hence, we have that $ \bar F \EEE(p'_2; \eta, r)$ is positive for
all $r > \frac{4 + 2 \eta + \eta^2}{4+ 4\eta}$, so $ \bar F
\EEE(p;\eta,r) > 0$ for all $p \in [p'_1,p'_2]$ for all admissible
$\eta$ and $r$. This in turn implies that $ \hat E \EEE( \hat
x_{\rm opt} \EEE;\eta,p,r)>0$, which concludes the proof.

\subsection{ Proof of Lemma \ref{lemma: letztes lemma}}\label{sec: lemma: letztes lemma}
We argue by contradiction. First, by  $m_A = m$, we get $m_B = pm$, and thus $p= \frac{m_B}{m} \ge  \frac{r(2+\eta p)}{2+2r}$ yields $p \ge \frac{2r}{2+(2-\eta)r}$. Then, we get 
$$p + p' > \frac{2r}{2+(2-\eta)r}   +  \frac{2-\eta}{2} = \frac{-2 - (-2 + \eta) \eta + \sqrt{2} \sqrt{2 + \eta}}{2 \eta} ,$$
where in the last step we used $r = r_{13}(\eta)$. An elementary
computation shows that this value is larger than
$2\frac{\sqrt{4+2\eta} -2 }{\eta}$ for all $\eta \in (0, \eta_{\rm
  triple})$. This contradicts Lemma \ref{lem:psmaller13}.

\section*{Acknowledgements}
MF  acknowledges support of the DFG project FR 4083/3-1.  This work
 was supported by the Deutsche Forschungsgemeinschaft (DFG, German
 Research Foundation) under Germany's Excellence Strategy EXC 2044
 -390685587, Mathematics M\"unster: Dynamics--Geometry--Structure.  WG acknowledges support of the Austrian Science Fund (FWF), grants I4354 and ESP 88.  US acknowledges support of the FWF grants I4354, F65, I5149, and P\,32788. For the purpose of open access, the authors have applied a CC BY public copyright licence to any Author Accepted Manuscript version arising from this submission.
\EEE

\end{document}